\DeclareMathOperator{\Aut}{Aut}
\DeclareMathOperator{\Out}{Out}
\DeclareMathOperator{\Id}{Id}
\DeclareMathOperator{\Cay}{Cay}
\DeclareMathOperator{\Hom}{Hom}
\DeclareMathOperator{\Stab}{Stab}
\newcommand{\ad}{\mathrm{ad}}
\DeclareMathOperator{\Inn}{Inn}
\DeclareMathOperator{\GL}{GL}
\newcommand{\ZZ}{\mathbb{Z}}
\newcommand{\NN}{\mathbb{N}}
\newcommand{\IA}{\operatorname{IA}_n(\mathbb{Z} / 3\mathbb{Z})}
\newcommand{\FF}[1]{\mathbb{F}^{(#1)}}
\newcommand{\FFF}[1]{\widehat{\mathbb{F}}^{(#1)}}
\newcommand{\FFFF}[1]{\widetilde{\mathbb{F}}^{(#1)}}
\theoremstyle{definition}
\newtheorem{defn}{Definition}[section]
\theoremstyle{plain}
\newtheorem{thm}[defn]{Theorem}
\newtheorem{lem}[defn]{Lemma}
\newtheorem{prop}[defn]{Proposition}
\newtheorem{cor}[defn]{Corollary}
\theoremstyle{remark}
\newtheorem{remark}[defn]{Remark}
\title{Free-by-cyclic groups are equationally Noetherian}
\author{Monika Kudlinska}
\address[M.\ Kudlinska]{Emmanuel College, University of Cambridge, St Andrew's Street, Cambridge CB2~3AP, United Kingdom}
\email{mak74@cam.ac.uk}
\author{Motiejus Valiunas}
\address[M.\ Valiunas]{Instytut Matematyczny, Uniwersytet Wroc{\l}awski, plac Grunwaldzki 2, 50-384 Wroc{\l}aw, Poland}
\email{motiejus.valiunas@math.uni.wroc.pl}
\keywords{Equationally Noetherian groups, free-by-cyclic groups, exponential growth rates.}
\subjclass{20F70, 20E36, 20E08, 20F69.}
\begin{document}

\begin{abstract}
A group $G$ is said to be equationally Noetherian if every system of equations in $G$ is equivalent to a finite subsystem. We show that all free-by-cyclic groups are equationally Noetherian. As a corollary, we deduce that the set of exponential growth rates of a free-by-cyclic group is well ordered. Along the way, we prove that free-by-cyclic groups with polynomially growing monodromies of infinite order admit non-elementary 4-acylindrical actions on trees. We show that the splittings arising from the improved relative train track machinery of Bestvina--Feighn--Handel \cite{BestvinaFeighnHandel2005} are 2-acylindrical, when the growth is at least quadratic.
\end{abstract}
\maketitle

\section{Introduction}

A group $G$ is \emph{free-by-cyclic} if it admits the structure of a semidirect product $G = F \rtimes_{\varphi} \ZZ$ where $F$ is a finitely generated free group. It is easy to check that the group $G$ only depends on the outer automorphism $\Phi \in \Out(F)$ represented by $\varphi$, and so we may write $F \rtimes_\Phi \ZZ$ for $G$. We will sometimes call $G$ the \emph{mapping torus} of $\varphi$ or $\Phi$, and $\Phi$ the \emph{monodromy} of $G$. There is a close link between the algebraic and geometric properties of a free-by-cyclic group $F \rtimes_{\Phi} \ZZ$, and the dynamics of the monodromy map $\Phi \in \Out(F)$ \cite{Brinkmann2000, DahmaniLi2022, Hagen2019}.

An important open question is whether all free-by-cyclic groups are linear. By the work of Hagen--Wise \cite{HagenWise2015}, free-by-cyclic groups which are hyperbolic admit proper and cocompact actions on CAT(0) cube complexes. Hence, the work of Agol \cite{Agol2013} implies that hyperbolic free-by-cyclic groups are virtually special and thus linear. On the other hand, Button shows that there exist free-by-cyclic groups which fail to be linear over any field of positive characteristic \cite[Corollary~4.7]{Button2017}.

In this paper we study a generalisation of linearity for groups. A group $G$ is said to be \emph{equationally Noetherian} if any system of equations over $G$ is equivalent to a finite subsystem. This notion, originating in the context of Algebraic Geometry over Groups \cite{BaumslagMyasnikovRemeslennikov1999}, has been thereafter studied from a geometric point of view, and can be interpreted as the study of the set $\Hom(H,G)$ for a finitely generated group $H$ \cite{Sela2001}. Notably, such geometric methods have been used to show that groups hyperbolic relative to equationally Noetherian groups are equationally Noetherian themselves \cite{GrovesHull2019}. 

Our main result is the following.

\begin{thm} \label{thm:main}
All free-by-cyclic groups are equationally Noetherian.
\end{thm}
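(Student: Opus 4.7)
The plan is to proceed by a two-step reduction using relative hyperbolicity together with the Groves-Hull theorem cited in the introduction. Writing $G = F \rtimes_\Phi \ZZ$, the monodromy $\Phi \in \Out(F)$ decomposes dynamically into exponentially growing and polynomially growing parts via the train track theory of Bestvina-Feighn-Handel, and this decomposition is reflected geometrically: the work of Dahmani-Li (building on Gautero-Lustig) shows that $G$ is hyperbolic relative to a finite collection $\mathcal{P}$ of subgroups, each of which is (up to finite index) the mapping torus $H \rtimes_\Psi \ZZ$ of a polynomially growing automorphism $\Psi$ of a finitely generated free group $H$. By Groves-Hull, the problem of establishing that $G$ is equationally Noetherian reduces to showing that each polynomially growing mapping torus is equationally Noetherian.

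To handle the polynomial case, I would show that any such mapping torus $H \rtimes_\Psi \ZZ$ is linear. Passing to a power of $\Psi$ we may assume $\Psi$ is unipotent polynomially growing (UPG), whence Bestvina-Feighn-Handel provide a $\Psi$-invariant filtration of $H$ by free factors with cyclic quotients on which $\Psi$ acts trivially. This filtration exhibits $H \rtimes_\Psi \ZZ$ as an iterated HNN extension along free subgroups, which I would translate into an explicit faithful representation into $\GL_N(\ZZ[t,t^{-1}])$: generators of $H$ coming from successive strata map to elementary unipotent matrices recording their expression in terms of lower strata, and the stable generator of $\ZZ$ maps to a diagonal matrix encoding the $\Psi$-action via the variable $t$. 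Since linear groups over any commutative ring are equationally Noetherian (Bryant-Guba), this would conclude the argument, and indeed the full theorem.

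The main obstacle is the polynomially growing case, and in particular the verification of the candidate linear representation. The relative hyperbolicity reduction is standard machinery and should pass through without major difficulty, but the polynomial case forces one to engage with the combinatorics of UPG train track maps and check that the proposed matrices satisfy every HNN relation across the filtration; special care is needed for the higher strata, where $\Psi$ induces non-trivial unipotent conjugation relations that must be matched precisely by the corresponding elementary matrix commutators. A fallback option, if a clean embedding into $\GL_N(\ZZ[t,t^{-1}])$ proves elusive, would be to argue equational Noetherianity for the polynomial piece directly from the iterated HNN decomposition by analysing how sequences of homomorphisms into the mapping torus can degenerate across successive strata, using the fact that each stratum is abelian-by-cyclic and hence linear in its own right.
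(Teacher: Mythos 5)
Your first step—reducing to polynomially growing monodromy via Dahmani--Li/Ghosh relative hyperbolicity and Groves--Hull's Theorem~D—is exactly the reduction the paper makes, and it is correct. The gap is in the polynomially growing case.

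You propose to handle it by showing that the mapping torus of a UPG automorphism embeds into $\GL_N(\ZZ[t,t^{-1}])$. This is not a known result, and it would in fact settle (for the polynomial regime) the open question that the paper explicitly flags in its introduction: whether all free-by-cyclic groups are linear. The only free-by-cyclic groups currently known to be linear are the hyperbolic ones (Hagen--Wise plus Agol), which lie in the exponentially growing regime and have no analogue here; Button's examples of non-linearity over fields of positive characteristic show the question is genuinely delicate. So the claim you are resting on is an open problem, not a lemma. Moreover, the proposed ansatz does not go through on its own terms: the upper-triangular filtration gives relations $t e_i t^{-1} = e_i \rho_i$ with $\rho_i$ supported on lower strata, but for growth degree $\geq 2$ the words $[f^r(\rho_i)]$ grow without bound, and a fixed elementary unipotent matrix for $e_i$ together with a diagonal matrix for $t$ cannot realise these relations simultaneously; you acknowledge this yourself ("special care is needed for the higher strata"). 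Your fallback (analysing degenerating sequences of homomorphisms across the strata) is closer in spirit to a workable argument, but as stated it is only a hope, not a proof.

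What the paper actually does for the polynomial case is an induction on the growth degree $d$ of a UPG power of the monodromy, entirely avoiding linearity. For $d \geq 2$ it uses the Andrew--Hughes--Kudlinska splitting with infinite cyclic edge groups and vertex groups that are mapping tori of strictly lower UPG growth, proves the action on the Bass--Serre tree is $2$-acylindrical using the Bestvina--Feighn--Handel eigenray proposition, and applies the Valiunas criterion from \cite{Valiunas2021}. The base case $d = 1$ requires a separate argument (Proposition~\ref{prop:en-linear}): the Andrew--Martino splitting has $F_v \times \ZZ$ vertex groups and $\ZZ^2$ edge groups, the paper proves this action is $4$-acylindrical, and a new criterion (Proposition~\ref{prop:cyclic-ext}) built on the Groves--Hull non-divergent-sequence machinery is then used to conclude. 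To repair your proposal you would need to replace the linearity claim by an argument of this inductive, tree-theoretic type; the iterated HNN structure alone, without acylindricity and quasi-algebraicity inputs, does not suffice.
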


Let us now briefly outline the proof of Theorem~\ref{thm:main}; we refer the reader to Section~\ref{s:autos} for the definitions of growth of automorphisms and unipotent automorphisms. 

We begin by considering the case where the free-by-cyclic group $G = F \rtimes_{\Phi} \ZZ$ has  unipotent and polynomially growing monodromy. We proceed by induction on the polynomial growth rate $d$ of $\Phi$. When $d > 0$, it is a well-known fact that the free-by-cyclic group $G$ acts on a simplicial tree, where the vertex stabilisers are free-by-cyclic with polynomially growing monodromies of growth rate strictly less than $d$ \cite[Proposition~2.5]{AndrewHughesKudlinska2023}. When the growth rate is at least quadratic, it is possible to find such a tree $\mathcal{T}$ where the edge stabilisers are infinite cyclic. We show that the action of the free-by-cyclic group on $\mathcal{T}$ is 4-acylindrical and use the criterion of the second author in \cite[Thereom~1.9]{Valiunas2021} to conclude that $G$ is equationally Noetherian. 

In the case that the growth of $\Phi$ is linear, by the work of Andrew--Martino \cite{AndrewMartino2022} there is an action of $G$ on a tree with free abelian edge stabilisers and vertex stabilisers of the form $F_v \times \ZZ$ where $F_v$ is free. We show that this action is also 4-acylindrical. Moreover, we give a new criterion (Proposition~\ref{prop:cyclic-ext}) which ensures that such a splitting gives rise to an equationally Noetherian group.  A key ingredient in Proposition~\ref{prop:cyclic-ext}, as well as Theorem~1.9 in \cite{Valiunas2021}, is the machinery of \emph{non-divergent sequences} due to Groves--Hull \cite{GrovesHull2019}.

For the general case, we note that any free-by-cyclic group with exponentially growing monodromy is hyperbolic relative to a collection of free-by-cyclic subgroups with polynomially growing monodromies \cite{DahmaniLi2022}. This, combined with the work of Groves--Hull \cite[Theorem~D]{GrovesHull2019}, implies Theorem~\ref{thm:main} assuming the polynomially growing monodromy case.

\medskip

The main application of our result is to the study of \emph{growth rates} of free-by-cyclic groups. Let $G$ be a group and $S$ a finite generating set of $G$. The \emph{exponential growth rate} of $(G, S)$ is defined to be \[e(G, S) \coloneqq \lim_{n\to \infty} |B_n(G, S)|^{\frac{1}{n}}\]
where $B_n(G, S)$ denotes the ball of radius $n$ around the identity in the Cayley graph of $G$ with respect to the generating set $S$, and $|B_n(G, S)|$ denotes the number of vertices in $B_n(G, S)$. A group $G$ is said to be \emph{exponentially growing} if there exists a finite generating set $S$ such that $e(G, S) > 1$. In that case, it is interesting to study the set of all exponential growth rates of $G$, 
\[\xi(G) \coloneqq \{e(G, S) \mid S \text{ finite generating set of }G\}.\]
A group $G$ is said to admit \emph{uniform exponential growth} if $\inf \xi(G) > 1$. For any group $G$ which is free-by-cyclic and not virtually abelian, $G$ is known to have uniform exponential growth by \cite[Lemma~2.3]{CeccheriniGrigorchuk1997}. We give a wide-reaching generalisation of this result by showing the following.

\begin{cor}\label{cor:growth-rates-order}
    If $G$ is free-by-cyclic then the set of exponential growth rates $\xi(G)$ of $G$ is well ordered.
\end{cor}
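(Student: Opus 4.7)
The strategy is to apply Fujiwara's theorem \cite[Theorem~1.1]{Fujiwara2023}. Beyond being equationally Noetherian (which is Theorem~\ref{thm:main}), one must supply a non-elementary acylindrical action of $G$ on a hyperbolic space $X$, together with a uniform integer $M$ such that $S^M$ contains an element acting hyperbolically on $X$ for every finite symmetric generating set $S$ of $G$ containing the identity. If $G$ is not exponentially growing, then $F$ has rank at most one and $\xi(G) \subseteq \{1\}$ is trivially well ordered, so I may assume $G = F \rtimes_\Phi \ZZ$ with $F$ of rank at least two.

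To produce the acylindrical action, I would split on the dynamics of the monodromy $\Phi$. If $\Phi$ is exponentially growing and atoroidal, then $G$ is word hyperbolic by Brinkmann's theorem \cite{Brinkmann2000}, and its action on its Cayley graph is acylindrical and non-elementary. If $\Phi$ is exponentially growing but not atoroidal, then by Dahmani--Li \cite{DahmaniLi2022} $G$ is hyperbolic relative to a proper collection of free-by-cyclic subgroups with polynomially growing monodromies, so $G$ acts non-elementarily and acylindrically on the associated (hyperbolic) coned-off Cayley graph. If $\Phi$ is polynomially growing, then as in the proof of Theorem~\ref{thm:main} the group $G$ admits a splitting as a graph of groups whose Bass--Serre tree $\cT$ carries a $4$-acylindrical action; this action is non-elementary because $F$ has rank at least two and cannot globally fix a vertex.

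The main obstacle is verifying the uniform power hypothesis. For the word hyperbolic case it follows from Koubi's theorem on uniform exponential growth. In the other two cases, the set of non-loxodromic elements of $G$ for the action on $X$ is contained in a union of conjugates of finitely many proper subgroups --- the peripheral subgroups in the relatively hyperbolic case, and the vertex stabilisers in the tree-action case. I would argue, via a ping-pong argument based on acylindricity, that since $\langle S \rangle = G$ forces $S$ to meet the complement of any single such subgroup, a bounded-length product of elements of $S$ acts loxodromically on $X$; a careful quantification should then yield a uniform $M$ independent of $S$. Carrying this out is the main technical step, analogous in spirit to Koubi's argument adapted to the relatively hyperbolic and tree-action settings.
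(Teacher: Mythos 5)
Your overall strategy (invoke Fujiwara's well-ordering criterion, produce an acylindrical action by casing on the dynamics of $\Phi$) matches the paper's, but there are two genuine gaps.

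First, the uniform power hypothesis is not merely ``the main technical step'' you can defer: it is the heart of the argument, and your proposed ping-pong quantification is nontrivial to carry out. The paper avoids it entirely by two much more elementary observations. In the relatively hyperbolic case it does not use Fujiwara's Theorem~1.1 at all, but rather cites Fujiwara's Theorem~1.2, which is a statement tailored to relatively hyperbolic groups with equationally Noetherian peripherals and does not require verifying the uniform hyperbolic-element bound by hand. In the polynomially growing infinite-order case, where the action is on a simplicial tree, the bound $M=2$ is immediate from the classical fact (Serre, Corollary~2, p.~64) that for any group acting on a tree without a global fixed point and any finite generating set $S$, either some element of $S$ or some product of two elements of $S$ is hyperbolic. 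Your proposed Koubi-style ping-pong is both harder and unnecessary.

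Second, you do not handle the case where $\Phi$ has \emph{finite} order (equivalently, constant polynomial growth). This is a real case: $F$ may have rank $\geq 2$ while $\Phi$ has finite order, so your reduction to ``$F$ of rank at least two'' does not remove it, and the tree actions you invoke do not exist here --- the splittings of Lemma~\ref{lem:polynomial-growth-splitting} and Lemma~\ref{lem:linear-growth-splitting} require positive growth degree, and more fundamentally $G$ has non-trivial center when $\Phi$ has finite order, which forces the center into the kernel of any acylindrical action on a tree with unbounded orbits. The paper resolves this by quotienting out a central cyclic subgroup $Z$ and showing $\xi(G) = \xi(G/Z)$ via Lemma~\ref{lem:growth-rates}, at which point $G/Z$ is virtually free and Fujiwara--Sela applies. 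Without something like this reduction, your argument is silent on this case.

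A smaller inaccuracy: for the polynomially growing case, the relevant acylindrical action does not come ``as in the proof of Theorem~\ref{thm:main}'' from Lemma~\ref{lem:polynomial-growth-splitting}, which only covers degree $\geq 2$; the paper instead cites the unified statement Proposition~\ref{prop:acylindrical_actions} (via Guirardel--Horbez trees), which works for any polynomially growing $\Phi$ of infinite order.
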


Fujiwara--Sela \cite{FujiwaraSela2023} prove that the set of exponential growth rates of a Gromov hyperbolic group is well ordered. Their theorem is an algebraic analogue of a celebrated result of J{\o}rgensen and Thurston which states that the set of volumes of finite volume complete hyperbolic 3-manifolds is well ordered \cite{Thurston}.

The work of Fujiwara--Sela has been extended by Fujiwara to certain acylindrically hyperbolic groups in \cite{Fujiwara2023}. More precisely, Fujiwara shows that if an equationally Noetherian group $G$ admits a non-elementary acylindrical action on a hyperbolic space $X$, and there exists some positive integer $M$ such that for any finite generating set $S$ satisfying $1 \in S = S^{-1}$, the set $S^M$ contains a hyperbolic element for the action $G \curvearrowright X$, then the set of exponential growth rates of $G$ is well ordered \cite[Theorem~1.1]{Fujiwara2023}. Our Corollary~\ref{cor:growth-rates-order} follows by applying this result to the setting of free-by-cyclic groups, combining our Theorem~\ref{thm:main} with an observation that all free-by-cyclic groups with polynomially growing monodromies admit 4-acylindrical actions on simplicial trees.

A consequence of Corollary~\ref{cor:growth-rates-order} is that a free-by-cyclic group $G$ (of exponential growth) has a minimal exponential growth rate, i.e.\ $e(G,S) = \inf \xi(G)$ for some finite generating set $S$ of $G$. This minimal growth rate is a natural candidate for an algebraic notion of volume in groups. Other notions of volume for free-by-cyclic groups include the \emph{$\ell^2$-torsion} (see \cite[Chapter~3]{Lueck2002}) and \emph{minimal volume entropy} defined by Bregman--Clay in \cite{BregmanClay2021}. It is interesting to note that whilst the $\ell^2$-torsion $\rho^{(2)}$ vanishes for every free-by-cyclic group with polynomially growing monodromy \cite[Theorem~5.1]{Clay2017}, and the minimal volume entropy $\mathcal{E}_{\rm min}$ vanishes for certain free-by-cyclic groups with linearly-growing monodromies \cite[Theorem~1.1]{BregmanClay2021}, the logarithm of the minimal exponential growth rate of any free-by-cyclic group of exponential growth is positive.

\medskip

The structure of the paper is as follows.  In Section~\ref{s:background}, we cover the preliminary notions on graphs of groups, equationally Noetherian groups and automorphisms of free groups. In Section~\ref{s:acylindrical}, we show that certain actions of free-by-cyclic groups on trees are acylindrical. We prove Theorem~\ref{thm:main} by induction on the growth of the automorphism $\Phi \in \Out(F)$ defining a free-by-cyclic group $G = F \rtimes_\Phi \ZZ$: we prove the base case (when $\Phi$ has linear growth) in Section~\ref{s:linear}, and the general case in Section~\ref{s:superlinear}.  We prove Corollary~\ref{cor:growth-rates-order} in Section~\ref{s:growth-rates}.

\subsection*{Acknowledgements}

We would like to thank Naomi Andrew, Daniel Groves, Alice Kerr, Armando Martino and the anonymous referee for valuable comments. We would also like to thank Naomi Andrew and Camille Horbez for a number of useful discussions. The second author was partially supported by the National Science Centre (Poland) grant No.\  2022/47/D/ST1/00779.

\section{Background}
\label{s:background}

\subsection{Graphs and graphs of groups}

Throughout the paper, graphs are undirected (although each edge is sometimes viewed as a pair of oriented edges), possibly with loops or multiple edges. Given a graph $\Gamma$, we write $V(\Gamma)$ and $E(\Gamma)$ for the set of vertices and directed edges of $\Gamma$, respectively. For any $e \in E(\Gamma)$, we write $i(e)$ for the initial vertex of $e$, and $\overline{e}$ for the edge $e$ with the orientation reversed, so that $e$ has endpoints $i(e)$ and $i(\overline{e})$. 

Let $I$ be a compact interval. A \emph{trivial path} in a graph $\Gamma$ is a map $\rho \colon I \to \Gamma$ whose image is a single point in $\Gamma$. A \emph{path} in $\Gamma$ is a trivial path or an immersion $\rho \colon I \to \Gamma$. We will not distinguish between a path $\rho$ and an orientation-preserving reparametrization of $\rho$. Any continuous map $\tau \colon I \to \Gamma$ is homotopic relative to its endpoints to a unique path which we denote by $[\tau]$ and call the \emph{tightening} of $\tau$. We will sometimes use the term \emph{path} to mean the image of a path.

A \emph{tree} is a connected graph with no cycles. We say that an action of a group $G$ on a tree $\mathcal{T}$ is \emph{trivial} if there exists a global fixed point, and \emph{minimal} if there does not exist a proper $G$-invariant subtree. An action of $G$ on $\mathcal{T}$ is said to be \emph{without inversion} if whenever $G$ fixes an edge, it acts trivially on each endpoint of the edge. We define an action of $G$ on a tree $\mathcal{T}$ to be \emph{irreducible} if $G$ does not fix a point, a line, or an end of the tree. 

A \emph{$G$-tree} is a simplicial tree $\mathcal{T}$ together with an action of the group $G$ which is minimal and without inversion. We will also sometimes use the term \emph{splitting of $G$} to mean a $G$-tree. A \emph{free splitting} is a splitting with trivial edge stabilisers.

Following Sela \cite{Sela1997}, given a non-negative integer $k$, we say an action of a torsion-free group $G$ on a tree $\mathcal{T}$ is \emph{$k$-acylindrical} if the pointwise stabiliser of any segment of length greater than $k$ is trivial. We say the action is \emph{acylindrical} if it is $k$-acylindrical for some $k$. 

\begin{remark}
    Given a Gromov hyperbolic space $X$, there is an alternative notion of an acylindrical action $G \curvearrowright X$ due to Bowditch \cite{Bowditch2008,Osin2016}, and a non-(virtually cyclic) group admitting such an action with unbounded orbits is said to be \emph{acylindrically hyperbolic}. Even though, in general, the two notions of an acylindrical action $G \curvearrowright X$ are different, they do coincide in the case when $X$ is a tree and $G$ is torsion-free \cite[Lemma~4.2]{MinasyanOsin2015}. In this paper we will only be considering acylindrical actions of free-by-cyclic groups on trees, and so we do not introduce the more general terminology of an acylindrical action in the sense of Bowditch.
\end{remark}

A \emph{graph of groups} is a finite connected graph $\mathcal{G}$ together with collections of groups $\{ G_v \mid v \in V(\mathcal{G}) \}$ and $\{ G_e \mid e \in E(\mathcal{G}) \}$, with $G_e = G_{\overline{e}}$, and a collection of injective group homomorphisms $\{ \iota_e\colon G_e \to G_{i(e)} \mid e \in E(\mathcal{G}) \}$. The \emph{fundamental group} of a graph of groups $\mathcal{G}$ is the group
\[
\pi_1(\mathcal{G}) \coloneqq \left( *_{v \in V(\mathcal{G})} G_v \right) * F_E \Big/
\left\langle\!\!\left\langle \{ \iota_e(h)^{-1} e \iota_{\overline{e}}(h) e^{-1} \mid e \in E(\mathcal{G}), h \in G_e \} \cup \{ e\overline{e} \mid e \in E(\mathcal{G}) \} \cup \mathcal{T}_0 \right\rangle\!\!\right\rangle
\]
where $F_E$ is a free group with basis $E(\mathcal{G})$, and $\mathcal{T}_0 \subseteq E(\mathcal{G}) \subseteq F_E$ is a spanning tree in the graph~$\mathcal{G}$. By Bass--Serre theory, given a group $G$ there is a one-to-one correspondence between graphs of groups $\mathcal{G}$ such that $G \cong \pi_1(\mathcal{G})$ and cocompact $G$-trees $\mathcal{T}_{\mathcal{G}}$, so that the vertex and edge stabilisers in $\mathcal{T}_{\mathcal{G}}$ are conjugates of the vertex groups $G_v$ and the edge groups $G_e$ of $\mathcal{G}$, respectively. The fact that $\mathcal{T}_{\mathcal{G}}$ is in fact a tree is a geometric analogue of the following theorem.

\begin{thm}[Normal form theorem; see {\cite[Chapter I, Theorem 4.1]{DicksDunwoody1989}}] \label{thm:gog-normal-forms}
Let $\mathcal{G}$ be a graph of groups, and let $g = g_0 e_1 g_1 \cdots e_n g_n \in \pi_1(\mathcal{G})$ with $g_j \in G_{v_j}$ for some $v_j \in V(\mathcal{G})$ and $e_j \in E(\mathcal{G})$, such that $v_n=v_0$, $v_{j-1} = i(e_j)$ and $v_j = i(\overline{e_j})$ for $1 \leq j \leq n$. If $g = 1$ in $\pi_1(\mathcal{G})$, then either $n = 0$ and $g_0 = 1$ in $G_{v_0}$, or $n \geq 2$ and there exists $j \in \{1,\ldots,n-1\}$ such that $e_{j+1} = \overline{e_j}$ and $g_j \in \iota_{\overline{e_j}}(G_{e_j})$.
\end{thm}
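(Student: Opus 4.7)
The plan is to prove this classical normal form theorem by induction on the number of edges of $\mathcal{G}$, reducing the general graph of groups to its two fundamental building blocks: amalgamated free products (for tree edges) and HNN extensions (for non-tree edges). The base case, when $\mathcal{G}$ consists of a single vertex with no edges, is immediate, since then $\pi_1(\mathcal{G})$ coincides with the vertex group $G_{v_0}$ and there is nothing to prove.

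For the inductive step, fix a spanning tree $\mathcal{T}_0 \subseteq \mathcal{G}$. Suppose first that $\mathcal{G}$ has an edge $e \notin \mathcal{T}_0$ (possibly a loop), and let $\mathcal{G}'$ be the graph of groups obtained by deleting $e$ and $\overline{e}$; an inspection of the presentation shows that $\pi_1(\mathcal{G})$ is the HNN extension of $\pi_1(\mathcal{G}')$ with stable letter $e$ and associated subgroups $\iota_e(G_e), \iota_{\overline{e}}(G_e) \leq \pi_1(\mathcal{G}')$. Given the word $g = g_0 e_1 g_1 \cdots e_n g_n$ with $g = 1$, I would collect maximal sub-expressions not involving $e^{\pm 1}$ into single elements of $\pi_1(\mathcal{G}')$ and apply Britton's lemma. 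This yields either an expression with no occurrence of the stable letter (in which case the inductive hypothesis applied to $\mathcal{G}'$ finishes the argument), or a \emph{pinch} of the form $e^{\varepsilon} h e^{-\varepsilon}$ with $h$ lying in the appropriate associated subgroup; the pinch, unwound, provides an index $j$ with $e_{j+1} = \overline{e_j}$ and $g_j \in \iota_{\overline{e_j}}(G_{e_j})$.

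If instead every edge of $\mathcal{G}$ lies in $\mathcal{T}_0$, pick any leaf vertex $v$ of $\mathcal{T}_0$ (using that a finite tree on more than one vertex has at least two leaves, so one can arrange $v \neq v_0$) together with its unique incident edge $e$, and let $\mathcal{G}'$ be obtained by removing $v, e, \overline{e}$. Then $\pi_1(\mathcal{G}) \cong \pi_1(\mathcal{G}') \ast_{G_e} G_v$ via $\iota_e$ and $\iota_{\overline{e}}$, and the classical amalgamated-free-product normal form theorem furnishes the analogous dichotomy: any occurrence of the vertex $v$ in the path encoded by the word must be flanked by $\overline{e}$ and $e$, and unless some intermediate $g_j \in G_v$ lies in $\iota_{\overline{e}}(G_e)$ (giving the desired $j$), the word cannot represent the identity.

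The main obstacle here is purely bookkeeping: one must verify that the collecting of subwords respects the structural constraints $v_{j-1} = i(e_j)$ and $v_j = i(\overline{e_j})$, and that a pinch in the HNN (or amalgam) expression translates cleanly to an index in the original word satisfying both $e_{j+1} = \overline{e_j}$ and $g_j \in \iota_{\overline{e_j}}(G_{e_j})$, rather than merely giving a reduction somewhere in the collected expression. Once this translation is made precise, both inductive steps go through formally from Britton's lemma and the amalgam normal form theorem, which may be treated as black boxes.
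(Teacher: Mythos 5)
The paper states this theorem with a citation to Dicks--Dunwoody and gives no proof, so there is no in-paper argument to compare against; I am assessing your sketch on its own terms. Your overall strategy---induction on the number of edges, peeling off a non-tree edge to expose an HNN extension or a leaf vertex with its tree edge to expose an amalgam, then invoking Britton's lemma or the amalgam normal form theorem---is a correct and standard route, and the observation that you may always pick the leaf $v\neq v_0$ once only tree edges remain is the right one.

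Where I would push back is the claim that translating a pinch back to an index $j$ is ``purely bookkeeping.'' When Britton's lemma hands you a pinch $e^{\varepsilon}he^{-\varepsilon}$ in which the collected block $h$ corresponds to a subword $g_l e_{l+1}g_{l+1}\cdots e_m g_m$ with $m>l$, you learn only that $h$, viewed as an element of $\pi_1(\mathcal{G}')$, lies in the associated subgroup; no index of the original word has yet been singled out. To produce one you need a further application of the inductive hypothesis to the smaller graph of groups $\mathcal{G}'$: since $e_l=e^{\varepsilon}$ and $e_{m+1}=e^{-\varepsilon}$ force $v_l=v_m$, the inner subword is a closed path in $\mathcal{G}'$ based at $v_l$; writing $h'\in\iota_{\overline{e_l}}(G_{e_l})\leq G_{v_l}$ for the element it equals, the word $g_l e_{l+1}\cdots e_m\,(g_m(h')^{-1})$ represents $1$ in $\pi_1(\mathcal{G}')$ and has length $m-l\geq 1$, so the inductive hypothesis forces $m-l\geq 2$ and yields a reduction at some $j\in\{l+1,\ldots,m-1\}$---a reduction in the original word, since only $g_m$ was altered. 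The same issue, and the same fix, arise in the amalgam case when the factor falling into the amalgamated subgroup is a collected $\pi_1(\mathcal{G}')$-block rather than a single $g_j\in G_v$. So your outline does close, but the ``unwinding'' is a nested induction rather than a relabeling, and a complete write-up should make that explicit.
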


\subsection{Equationally Noetherian groups} 

Let $\mathbb{F}_m$ be a free group with basis $X = \{ x_1,\ldots,x_m \}$. Given an element $s \in \mathbb{F}_m$, a group $G$ and a tuple $(g_1,\ldots,g_m) \in G^m$, we write $s(g_1,\ldots,g_m)$ for the element of $G$ obtained by substituting $g_i$ for each copy of $x_i$ in $s$---that is, for the image of $s$ under the group homomorphism $\mathbb{F}_m \to G$ sending $x_i \mapsto g_i$. We call an element $s \in \mathbb{F}_m$ an \emph{equation}, and a subset $S \subseteq \mathbb{F}_m$ a \emph{system of equations}. For any $S \subseteq \mathbb{F}_m$ and any group $G$, the \emph{solution set} of $S$ in $G$ is defined as
\[
V_G(S) \coloneqq \{ (g_1,\ldots,g_m) \in G^m \mid s(g_1,\ldots,g_m) = 1 \text{ for all } s \in S \}.
\]
We say a group $G$ is \emph{equationally Noetherian} if for every positive integer $m$ and every $S \subseteq \mathbb{F}_m$ there exists a finite subset $S_0 \subseteq S$ such that $V_G(S_0) = V_G(S)$.

More generally, given a subset $S \subseteq \mathbb{F}_m$, a group $G$ and a subset $A \subseteq G$, we may define
\[
V_{G,A}(S) \coloneqq \{ (g_1,\ldots,g_m) \in G^m \mid s(g_1,\ldots,g_m) \in A \text{ for all } s \in S \}.
\]
We say the subset $A$ is \emph{quasi-algebraic} in $G$ if for every positive integer $m$ and every $S \subseteq \mathbb{F}_m$ there exists a finite subset $S_0 \subseteq S$ such that $V_{G,A}(S_0) = V_{G,A}(S)$. In particular, $G$ is equationally Noetherian if and only if $\{1\}$ is quasi-algebraic in $G$.

An alternative view of equationally Noetherian groups, taken in \cite{GrovesHull2019}, is given by considering sequences of homomorphisms $\mathbb{F}_m \to G$, as follows. Let $\omega$ be a non-principal ultrafilter (on $\NN$), i.e.\ a finitely additive probability mean $\mathcal{P}(\NN) \to \{0,1\}$. Given a property $P = P_j$ that depends on $j \in \NN$, we say that $P_j$ holds \emph{$\omega$-almost surely} if $\omega(\{ j \in \NN \mid P_j \text{ holds} \}) = 1$. Given a sequence of homomorphisms $(\varphi_j\colon \mathbb{F}_m \to G)_{j \in \NN}$ from a free group $\mathbb{F}_m$ to a group $G$, we define $\ker^\omega(\varphi_j) \coloneqq \{ g \in \mathbb{F}_m \mid \varphi_j(g) = 1 \text{ $\omega$-almost surely} \}$, which is clearly a normal subgroup of $\mathbb{F}_m$; more generally, given a subset $A \subseteq G$, we define $\varphi_{j,\omega}^{-1}(A) \coloneqq \{ g \in \mathbb{F}_m \mid \varphi_j(g) \in A \text{ $\omega$-almost surely} \}$. We then have the following alternative description of equationally Noetherian groups and quasi-algebraic subsets; the statement about equational Noetherianity is \cite[Theorem~3.5]{GrovesHull2019}, and the statement about quasi-algebraicity is a straightforward generalisation of this.

\begin{thm} \label{thm:en-characterisation}
Let $G$ be a group, $A \subseteq G$ a subset, and $\omega$ a non-principal ultrafilter. Then $A$ is quasi-algebraic in $G$ if and only if for every rank $m$ and any sequence of homomorphisms $(\varphi_j\colon \mathbb{F}_m \to G)_{j \in \NN}$ we have $\varphi_{j,\omega}^{-1}(A) \subseteq \varphi_j^{-1}(A)$ $\omega$-almost surely. In particular, $G$ is equationally Noetherian if and only if for any $(\varphi_j\colon \mathbb{F}_m \to G)_{j \in \NN}$ we have $\ker^\omega(\varphi_j) \subseteq \ker(\varphi_j)$ $\omega$-almost surely.
\end{thm}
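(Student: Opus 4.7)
My plan is to prove the more general statement about a quasi-algebraic subset $A$; the characterisation of equationally Noetherian groups will then follow as the special case $A = \{1\}$. The key leverage throughout is that a non-principal ultrafilter $\omega$ is finitely additive, so that a finite intersection of $\omega$-full subsets of $\NN$ is still $\omega$-full, while an infinite intersection need not be.

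For the forward direction, I would assume $A$ quasi-algebraic, fix an arbitrary sequence of homomorphisms $(\varphi_j)_{j \in \NN}$, and set $S \coloneqq \varphi_{j,\omega}^{-1}(A) \subseteq \mathbb{F}_m$. Quasi-algebraicity produces a finite $S_0 \subseteq S$ with $V_{G,A}(S_0) = V_{G,A}(S)$; for each $s \in S_0$, the index set $\{ j \mid \varphi_j(s) \in A \}$ is $\omega$-full by definition of $S$. Intersecting these finitely many $\omega$-full sets, one concludes that $\omega$-almost surely the tuple $(\varphi_j(x_1), \ldots, \varphi_j(x_m))$ lies in $V_{G,A}(S_0) = V_{G,A}(S)$, which is precisely the statement $S \subseteq \varphi_j^{-1}(A)$ on an $\omega$-full index set, as required.

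For the reverse direction, I would argue contrapositively. If $A$ is not quasi-algebraic, pick a witness $S \subseteq \mathbb{F}_m$ for which every finite subset $S_0 \subseteq S$ satisfies $V_{G,A}(S_0) \supsetneq V_{G,A}(S)$; in particular $S$ is infinite (otherwise $S_0 = S$ would succeed), so I may enumerate $S = \{ s_1, s_2, \ldots \}$. For each $j \in \NN$, choose a tuple in $V_{G,A}(\{s_1, \ldots, s_j\}) \setminus V_{G,A}(S)$ and let $\varphi_j \colon \mathbb{F}_m \to G$ be the corresponding homomorphism. Then $\varphi_j(s_n) \in A$ for all $j \geq n$, so $s_n \in \varphi_{j,\omega}^{-1}(A)$ by non-principality of $\omega$, which gives $S \subseteq \varphi_{j,\omega}^{-1}(A)$. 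By construction, however, each $\varphi_j$ fails to send all of $S$ into $A$, so $S \not\subseteq \varphi_j^{-1}(A)$ for every single $j$; the containment $\varphi_{j,\omega}^{-1}(A) \subseteq \varphi_j^{-1}(A)$ therefore fails on every index, contradicting the hypothesis.

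The only real subtlety is in the forward direction: one might be tempted to intersect the $\omega$-full sets $\{ j \mid \varphi_j(s) \in A \}$ over all $s \in \varphi_{j,\omega}^{-1}(A)$ at once, but this collection can be infinite and finite additivity is not enough to conclude anything about such an intersection. Quasi-algebraicity is precisely the input that replaces this potentially infinite intersection by a finite one and thereby makes the argument go through.
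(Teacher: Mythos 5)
Your proof is correct and is the standard argument for this characterisation; the paper itself simply cites Groves--Hull \cite[Theorem~3.5]{GrovesHull2019} for the case $A=\{1\}$ and asserts the generalisation to arbitrary $A$ without proof, so there is no in-paper argument to compare against, but yours is exactly what the cited proof does. Both directions are sound: the forward direction correctly isolates finite additivity as the reason one must first replace $S = \varphi_{j,\omega}^{-1}(A)$ by a finite $S_0$ before intersecting the $\omega$-full index sets, and the reverse direction correctly exploits countability of $\mathbb{F}_m$ to enumerate $S$ and uses non-principality of $\omega$ to conclude that the cofinite sets $\{j \geq n\}$ are $\omega$-full.
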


The key benefit of this construction is that it allows more geometric arguments in the presence of acylindrical hyperbolicity. In particular, suppose that a group $G$ acts acylindrically on a hyperbolic space $\mathcal{T}$; in this paper, we will only be using the case when $G$ is torsion-free and $\mathcal{T}$ is a tree. A sequence of homomorphisms $(\varphi_j\colon \mathbb{F}_m \to G)_{j \in \NN}$ is then said to be \emph{non-divergent} (with respect to $\omega$ and the action on $\mathcal{T}$) if
\[
\lim_{j \to \omega} \inf_{v \in \mathcal{T}} \max_{1 \leq i \leq m} d_{\mathcal{T}}(v,\varphi_j(x_i) \cdot v) < \infty,
\]
where $\{ x_1,\ldots,x_m \}$ is a free basis for $\mathbb{F}_m$. The main technical result of \cite{GrovesHull2019}, and the main tool we will be using in the proof of Theorem~\ref{thm:main}, then states that in order to show $G$ is equationally Noetherian it is enough to consider non-divergent sequences of homomorphisms in Theorem~\ref{thm:en-characterisation}, as follows.

\begin{thm}[{\cite[Theorem~B]{GrovesHull2019}}] \label{thm:en-non-divergent}
Let $G$ be a group acting acylindrically on a hyperbolic space $\mathcal{T}$ with unbounded orbits, and let $\omega$ be a non-principal ultrafilter. Suppose that for any rank $m$ and any non-divergent sequence of homomorphisms $(\varphi_j\colon \mathbb{F}_m \to G)_{j \in \NN}$ we have $\ker^\omega(\varphi_j) \subseteq \ker(\varphi_j)$ $\omega$-almost surely. Then $G$ is equationally Noetherian.
\end{thm}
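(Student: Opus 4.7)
The plan is to argue by contradiction, using a Bestvina--Paulin rescaling combined with a Rips--Sela shortening argument. Suppose $G$ is not equationally Noetherian; then by Theorem~\ref{thm:en-characterisation} there exists a sequence $(\varphi_j \colon \mathbb{F}_m \to G)_{j \in \NN}$ with $\ker^\omega(\varphi_j) \not\subseteq \ker(\varphi_j)$ $\omega$-almost surely. The hypothesis of the theorem disposes of the non-divergent case, so we may assume $(\varphi_j)$ is divergent, meaning $\lambda_j \coloneqq \inf_{v \in \mathcal{T}} \max_i d_\mathcal{T}(v, \varphi_j(x_i) \cdot v) \to \infty$ along $\omega$. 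The goal is to twist $(\varphi_j)$ by automorphisms of $\mathbb{F}_m$ to produce a non-divergent sequence that still witnesses the failure of the kernel containment, contradicting the hypothesis.

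Rescaling the metric on $\mathcal{T}$ by $1/\lambda_j$, basepointing at near-minimisers of $\lambda_j$, and passing to an equivariant Gromov--Hausdorff ultralimit produces a non-trivial isometric action of $\mathbb{F}_m$ on an $\mathbb{R}$-tree $T_\infty$. This is where the acylindricity of $G \curvearrowright \mathcal{T}$ enters in a critical way: it tames the limit, forcing arc stabilisers in $T_\infty$ to be very small (in our torsion-free tree setting, trivial) and ensuring that the action is stable in the sense of Bestvina--Feighn. Propagating the acylindricity bound through the rescaling to control arc stabilisers and to rule out non-stability is the step I expect to require the most delicate work.

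With tameness of $T_\infty$ established, the Rips machinery of Bestvina--Feighn and Guirardel decomposes the $\mathbb{F}_m$-action into simplicial, axial, and surface-type pieces, each supporting a canonical family of modular automorphisms (Dehn twists along edges and axes, and mapping classes on surface pieces). A standard shortening argument then yields, for each $j$, an element $\alpha_j \in \Aut(\mathbb{F}_m)$ drawn from this modular family for which $\psi_j \coloneqq \varphi_j \circ \alpha_j$ has strictly smaller $\lambda$-value. Passing to a minimal representative in the $\Aut(\mathbb{F}_m)$-orbit and applying an ultrafilter pigeonhole on the finitely many combinatorial types of the Rips decomposition so that the $\alpha_j$ cohere across $j$, one obtains a sequence $(\psi_j)$ that is $\omega$-almost surely non-divergent while still $\omega$-almost surely satisfying $\ker^\omega(\psi_j) \not\subseteq \ker(\psi_j)$. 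Applying the hypothesis to $(\psi_j)$ now yields the desired contradiction.
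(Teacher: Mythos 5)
Note first that the paper does not prove Theorem~\ref{thm:en-non-divergent}: it is stated as a citation of Theorem~B of Groves--Hull \cite{GrovesHull2019} and used as a black box, so there is no in-paper argument to compare your sketch against. Your outline does capture the broad strategy of the original Groves--Hull proof (Bestvina--Paulin rescaling to an $\mathbb{R}$-tree, acylindricity controlling arc stabilisers and guaranteeing stability, Rips decomposition, Rips--Sela shortening), but the concluding step has a genuine gap. Also note that the theorem is stated for an arbitrary group acting acylindrically on an arbitrary hyperbolic space, so the aside that arc stabilisers in $T_\infty$ are trivial ``in our torsion-free tree setting'' does not apply at this level of generality; in Groves--Hull one only gets that arc stabilisers of the limit tree are ``small'' relative to the acylindricity constants, and some care is needed.

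The real gap is the transfer from the shortened sequence back to the original one. After precomposing each $\varphi_j$ by an automorphism $\alpha_j \in \Aut(\mathbb{F}_m)$ to obtain a minimal-displacement representative $\psi_j = \varphi_j \circ \alpha_j$, the shortening argument does force $(\psi_j)$ to be non-divergent. However, you then assert that $(\psi_j)$ still $\omega$-almost surely violates $\ker^\omega \subseteq \ker$, and that does not follow from the assumption on $(\varphi_j)$. The automorphisms $\alpha_j$ vary with $j$: a fixed witness $g$ with $g \in \ker^\omega(\varphi_j) \setminus \ker(\varphi_j)$ $\omega$-a.s.\ corresponds under the precomposition to the $j$-dependent family $\alpha_j^{-1}(g)$, which is not a single element of $\mathbb{F}_m$ and hence cannot be said to lie in $\ker^\omega(\psi_j)$ (an $\omega$-kernel is a property of the whole sequence). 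Your appeal to an ``ultrafilter pigeonhole on finitely many combinatorial types of the Rips decomposition'' does not repair this, because the modular group attached to a Rips decomposition is infinite: the shortening automorphisms do not come from a finite menu, so there is no reason the $\alpha_j$ can be chosen $\omega$-almost-surely constant. Groves--Hull structure the argument so that this pull-back problem never arises: one passes to the $\omega$-limit group $L = \mathbb{F}_m/\ker^\omega(\varphi_j)$, checks that the shortening automorphisms descend to $L$, and works with the limit action and with $\omega$-limit quotients directly rather than attempting a $j$-by-$j$ transfer. That restructuring is the genuinely technical content missing from your sketch.
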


\subsection{Dynamics of free group automorphisms and topological representatives}
\label{s:autos}

Let $F$ denote a free group with a free basis $X$. For any element $g\in F$, we denote by $|g|_X$ the length of the reduced word representative of $g$. We write $|\bar{g}|_X$ to denote the minimal length of a cyclically reduced word representing a conjugate of $g$. 

An outer automorphism $\Phi \in \Out(F)$ acts on the set of conjugacy classes of elements in $F$. Given a conjugacy class $\bar{g}$ of an element $g\in F$, we say that \emph{$\bar{g}$ grows polynomially of degree $d$ under the iteration of $\Phi$,} if there exist constants $A,B > 0$ such that for all $k \geq 1$ \[Ak^d - A\leq |\Phi^k(\bar{g})|_X \leq Bk^d .\]
We say \emph{$g$ grows exponentially} if there exists a constant $\mu > 1$ such that for all $k \geq 1$ \[\mu^k \leq|\Phi^k(\bar{g})|.\]
For any two free generating sets $X$ and $Y$ of $F$, the word metrics with respect to $X$ and $Y$ are bi-Lipschitz equivalent. It follows that the growth of a conjugacy class under $\Phi$ does not depend on the specific choice of free basis for $F$.

We say the outer automorphism $\Phi \in \Out(F)$ \emph{grows polynomially of degree $d$} if there exists an element $g\in F$ whose conjugacy class grows polynomially of degree $d$ and such that every other conjugacy class grows polynomially of degree $\leq d$. We say $\Phi$ grows \emph{exponentially} if there exists an exponentially-growing conjugacy class.

We remark that by the existence of relative train track representatives \cite{BestvinaHandel1992}, every outer automorphism $\Phi \in \Out(F)$ grows either exponentially or polynomially.

We recall the following elementary fact:
\begin{lem}\label{lemma:slow-growth}
    An element $\Phi \in \Out(F)$ has polynomial growth of degree $0$ if and only if $\Phi$ has finite order in $\Out(F)$.
\end{lem}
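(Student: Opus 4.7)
The $(\Leftarrow)$ direction is immediate. If $\Phi^N = \Id$ in $\Out(\mathbb{F}_m)$, then the sequence of conjugacy classes $(\Phi^k(\bar g))_{k \geq 1}$ is $N$-periodic, so $|\Phi^k(\bar g)|_X$ takes at most $N$ distinct values. For any nontrivial $g$ these values lie in the interval $[1, \max_{0 \leq k < N} |\Phi^k(\bar g)|_X]$, giving polynomial growth of degree $0$.

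For the $(\Rightarrow)$ direction, assume $\Phi$ has polynomial growth of degree $0$. The key observation is that for each $C > 0$ the set of conjugacy classes of $\mathbb{F}_m$ with cyclically reduced length $\leq C$ is finite. Since by assumption $|\Phi^k(\bar g)|_X$ is bounded for every conjugacy class $\bar g$, this orbit is automatically \emph{finite}. Applying this to each basis element $x_i$ and taking an LCM of the orbit sizes yields $N \geq 1$ with $\Phi^N(\overline{x_i}) = \overline{x_i}$ for every $i$. A lift $\psi \in \Aut(\mathbb{F}_m)$ of $\Phi^N$ therefore satisfies $\psi(x_i) = g_i x_i g_i^{-1}$ for some $g_i \in \mathbb{F}_m$, and so $\Phi^N$ acts trivially on the abelianisation $\mathbb{F}_m^{\mathrm{ab}}$; equivalently, $\Phi^N$ lies in $\ker(\Out(\mathbb{F}_m) \to \GL(m, \mathbb{Z}))$.

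It remains to show that such a $\Phi^N$ must be trivial. The content here is the assertion that any nontrivial outer automorphism acting trivially on $\mathbb{F}_m^{\mathrm{ab}}$ produces a conjugacy class that grows at least polynomially of degree $1$. This can be established via the Andreadakis/Johnson filtration of $\ker(\Out(\mathbb{F}_m) \to \GL(m, \mathbb{Z}))$: if $\Phi^N$ were nontrivial, then, since $\mathbb{F}_m$ is residually nilpotent, there is a smallest $k \geq 2$ such that $\Phi^N$ acts nontrivially on $\mathbb{F}_m/\gamma_{k+1}(\mathbb{F}_m)$, and the associated Johnson-type homomorphism produces a basis element $x_i$ with $\psi^n(x_i) \equiv x_i c_i^n \pmod{\gamma_{k+1}(\mathbb{F}_m)}$ for some $c_i \in \gamma_k \setminus \gamma_{k+1}$. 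Projecting to the nilpotent group $\mathbb{F}_m/\gamma_{k+1}$ and using its polynomial growth one deduces that $|\Phi^{nN}(\overline{x_i})|_X$ is unbounded in $n$, contradicting degree $0$ growth. Alternatively, one can invoke the relative train track theorem of Bestvina--Handel: a polynomially growing outer automorphism of degree $0$ is represented by a finite-order graph homeomorphism, and hence has finite order in $\Out(\mathbb{F}_m)$.

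The main obstacle is this last step: the counting argument handles finiteness of orbits on basis elements cheaply, but ruling out nontrivial degree-zero behaviour in $\ker(\Out(\mathbb{F}_m) \to \GL(m, \mathbb{Z}))$ needs structural input, either from nilpotent filtrations or from train track theory. Since the paper labels the lemma ``elementary'', I expect the authors will simply cite one of these results rather than spell the argument out in detail.
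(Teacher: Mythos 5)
The paper states this lemma with no proof at all (``We recall the following elementary fact''), so there is nothing of the authors' to compare against; I will evaluate your proposal on its own terms. The $(\Leftarrow)$ direction is correct, as is the reduction at the start of $(\Rightarrow)$ to: some power $\Phi^N$ fixes the conjugacy class of every basis element, hence acts trivially on $\mathbb{F}_m^{\mathrm{ab}}$.

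The Johnson-filtration finish, however, has a real gap. From $\psi^n(x_i) \equiv x_i c_i^n \pmod{\gamma_{k+1}}$ with $c_i \neq 1$ you cannot conclude that the \emph{conjugacy class} $\Phi^{nN}(\overline{x_i})$ has unbounded cyclically reduced length: the element $x_i c_i^n$ may be conjugate to $x_i$ for every $n$. This happens exactly when $c_i$ lies in the image of the homomorphism $\gamma_{k-1}/\gamma_k \to \gamma_k/\gamma_{k+1}$, $h \mapsto [x_i,h]$, and this can occur for every $i$ simultaneously while the class of $\psi$ in $\Out(\mathbb{F}_m/\gamma_{k+1})$ remains nontrivial, because the conjugating element $h_i$ may vary with $i$. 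A concrete instance on $\mathbb{F}_3 = \langle a,b,c\rangle$ is the partial conjugation $\psi(a)=a$, $\psi(b)=aba^{-1}$, $\psi(c)=c$: it is non-inner, nontrivial already in $\Out(\mathbb{F}_3/\gamma_3)$, with $c_b = [b,a]^{-1}\neq 1$, yet $|\Phi^{n}(\overline{b})|_X = 1$ for all $n$; the (linear) growth is only detected by words such as $bc$. So fixing conjugacy classes of basis elements is not enough, and to make the nilpotent-quotient route work you would have to argue about a general word and show that the Johnson obstruction forces one of \emph{those} conjugacy classes to grow --- this is no longer the cheap counting argument.

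Your second suggestion --- pass to a power that is UPG with the same degree of growth (Lemma~\ref{lem:upg-finite-index}), take a unipotent upper-triangular representative $(f,\Gamma)$, and observe that a homotopically visible nontrivial suffix $\rho_i$ forces at least linear growth of some conjugacy class --- is the right route and is surely what the authors mean by ``elementary''. A small caveat when writing it out: a separating edge can carry a nontrivial $\rho_i$ while $f$ still induces the trivial outer automorphism, so the correct conclusion is triviality of the outer class rather than $f=\mathrm{id}$ on the nose, which is all the lemma needs.
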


An outer automorphism $\Phi \in \Out(F)$ is said to be \emph{unipotent} if it induces a unipotent element of $\GL(m, \mathbb{Z})$. We will often abbreviate unipotent polynomially growing to \emph{UPG}.

\begin{lem}[{\cite[Corollary~5.7.6]{BestvinaFeighnHandel2000}}] \label{lem:upg-finite-index}
    Let $\Phi \in \Out(F)$ be a polynomially growing outer automorphism. Then there exists a positive integer $k$ such that $\Phi^k$ is UPG with growth of the same degree as that of $\Phi$.
\end{lem}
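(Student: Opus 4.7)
The plan is to prove the lemma via a short linear algebra argument applied to the abelianization, bypassing the full relative train track machinery. Let $\pi\colon \Out(\mathbb{F}_m) \to \GL(m,\ZZ)$ denote the homomorphism induced by the abelianization $\mathrm{ab}\colon \mathbb{F}_m \to \ZZ^m$ (well-defined on $\Out$ since inner automorphisms act trivially on $\ZZ^m$), and write $A \coloneqq \pi(\Phi)$. Then $\|\mathrm{ab}(\bar g)\|_1 \leq |\bar g|_X$, and the equivariance identity $\mathrm{ab}(\Phi^n(\bar g)) = A^n \mathrm{ab}(g)$ gives $\|A^n\mathrm{ab}(g)\|_1 \leq |\Phi^n(\bar g)|_X$ for every conjugacy class $\bar g$. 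Since $\Phi$ is polynomially growing, every conjugacy class has polynomial orbit growth; as every $v \in \ZZ^m$ arises as $\mathrm{ab}(x_1^{v_1}\cdots x_m^{v_m})$, it follows that $\|A^n v\|_1$ is polynomially bounded in $n$ for all $v \in \ZZ^m$.

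From here I would argue linear-algebraically. By the Jordan decomposition of $A$ over $\mathbb{C}$, polynomial orbit growth on $\mathbb{R}^m$ forces $|\lambda| \leq 1$ for every eigenvalue $\lambda$ of $A$: indeed, if some $|\lambda| > 1$ then the real invariant subspace corresponding to $\lambda$ (defined over $\ZZ$, since $A$ is an integer matrix) would contain integer vectors with exponentially growing orbit. Moreover, $A \in \GL(m,\ZZ)$ satisfies $\prod_i |\lambda_i| = |\det A| = 1$, which together with $|\lambda_i| \leq 1$ forces $|\lambda_i| = 1$ for every $i$. Since the $\lambda_i$ are algebraic integers, Kronecker's theorem implies each $\lambda_i$ is a root of unity. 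Choosing $k \geq 1$ so that $\lambda_i^k = 1$ for every eigenvalue of $A$ gives that $A^k = \pi(\Phi^k)$ has all eigenvalues equal to $1$; that is, $A^k$ is unipotent, and hence $\Phi^k$ is unipotent by definition.

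Finally, the growth degrees match by elementary manipulation: if $A_0 n^d \leq |\Phi^n(\bar g)|_X \leq B_0 n^d$ for some conjugacy class $\bar g$ witnessing degree $d$, then $|(\Phi^k)^n(\bar g)|_X = |\Phi^{kn}(\bar g)|_X$ lies in $[A_0 k^d n^d, B_0 k^d n^d]$, so $\bar g$ realises degree $d$ under $\Phi^k$; the analogous upper bound for an arbitrary conjugacy class shows that no class grows faster than degree $d$ under $\Phi^k$. The main obstacle in this approach is the appeal to Kronecker's theorem: without it, eigenvalues on the unit circle could evade being roots of unity, and there would be no single $k$ making $A^k$ unipotent. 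The remainder is routine bookkeeping with norms and Jordan canonical form.
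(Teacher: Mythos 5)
Your proof is correct, and it is genuinely different from the paper's, which simply cites Corollary~5.7.6 of Bestvina--Feighn--Handel. Your approach works because the paper \emph{defines} ``unipotent'' for an outer automorphism $\Phi$ as ``$\pi(\Phi) \in \GL(m,\ZZ)$ is a unipotent matrix,'' so once you show $A^k$ is unipotent and $\Phi^k$ remains polynomially growing of the same degree, you are done by definition. (This coincides with BFH's train-track notion of UPG only by virtue of \cite[Proposition~3.5]{BestvinaFeighnHandel2005}, but that equivalence is not needed to prove the present lemma as stated.) What you gain over the paper's citation is a short, self-contained argument that avoids the full force of BFH's theorem about arbitrary PG subgroups: for a cyclic group $\langle\Phi\rangle$, the abelianization, the determinant constraint, and Kronecker's theorem suffice.

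One minor imprecision worth repairing: the phrase ``the real invariant subspace corresponding to $\lambda$ (defined over $\ZZ$, since $A$ is an integer matrix)'' is not quite right when $\lambda$ is a non-real or irrational eigenvalue, as the generalized eigenspace of a single $\lambda$ is not generally defined over $\ZZ$ or even over $\mathbb{R}$. The clean version of the step you want is: decompose $v\in\ZZ^m$ into its components in the generalized eigenspaces over $\mathbb{C}$; since $\ZZ^m$ spans $\mathbb{C}^m$, some integer vector has a nonzero component in the generalized eigenspace of any given eigenvalue $\lambda$, and for such $v$ the quantity $\|A^n v\|$ grows like $|\lambda|^n$ up to polynomial factors. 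Hence $|\lambda|>1$ would contradict the polynomial bound. With that rephrasing, the remainder---$|\det A|=1$ forces all $|\lambda_i|=1$, Kronecker gives roots of unity, and the degree bookkeeping for $\Phi^k$---goes through exactly as you wrote.
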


\medskip

A \emph{topological representative} of an outer automorphism $\Phi \in \Out(F)$ is a tuple $(f, \Gamma)$, where $\Gamma$ is a connected graph and $f \colon \Gamma \to \Gamma$ is a homotopy equivalence which determines $\Phi$. Moreover, we assume that $f$ preserves the set of vertices of $\Gamma$ and that it is locally injective on the interiors of edges. 

We call a topological representative $(f, \Gamma)$ \emph{upper triangular} if $f$ fixes every vertex of $\Gamma$ and if $(f, \Gamma)$ admits a maximal filtration \[\emptyset = \Gamma^{(0)} \subseteq \Gamma^{(1)} \subseteq \ldots \subseteq \Gamma^{(l)} = \Gamma,\] by subgraphs, such that the following properties hold for every index $i$:
\begin{enumerate}[label=(\alph*)]
\item $f(\Gamma^{(i)}) \subseteq \Gamma^{(i)}$;
\item the graph $\Gamma^{(i)}$ is obtained from $\Gamma^{(i-1)}$ by adding a single edge $e_i$;
\item the map $f$ sends the edge $e_i$ to the concatenation $e_i \cdot \rho_i$, where $\rho_i$ is a closed path in the subgraph $\Gamma^{(i-1)}$.
\end{enumerate}

Bestvina--Feighn--Handel introduce a family of topological representatives for polynomially growing outer automorphisms known as \emph{unipotent representatives} in \cite[Definition~3.13]{BestvinaFeighnHandel2005}. These are relative train tracks (in the sense of \cite{BestvinaHandel1992}) which are upper triangular, and which satisfy additional conditions allowing for tighter control of certain path splittings. We will not need the definition in full. Instead, we cite the following proposition which will be useful to us in what follows: 

\begin{prop}[{\cite[Proposition~3.18]{BestvinaFeighnHandel2005}}]\label{prop:eigen-rays}
    Let $\Phi \in \Out(F)$ be a UPG element, and let $(f, \Gamma)$ be a unipotent representative. Let $e_i$ be an edge of $\Gamma$ and $\rho_i$ the path in $\Gamma$ such that $f(e_i) = e_i\cdot \rho_i$. Suppose that $[f(\rho_i)] \neq \rho_i$. Then, for any initial segment $\tau$ of the infinite ray $e_i  \rho_i [f(\rho_i)] [f^2(\rho_i)] \ldots$ and any path $\sigma$ in $\Gamma$ which traverses $e_i$ in either direction, we have that $\tau$ or $\bar{\tau}$ is a subpath of $[f^r(\sigma)]$, for all $r$ sufficiently large.
\end{prop}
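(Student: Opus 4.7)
The strategy is to compute the tightened iterate $[f^r(e_i)]$ explicitly using the upper-triangular structure, and then control, via bounded cancellation, the subpath of $[f^r(\sigma)]$ arising from an occurrence of $e_i$ in $\sigma$.

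First, I would prove by induction on $r$ the splitting
\[ [f^r(e_i)] = e_i \cdot \rho_i \cdot [f(\rho_i)] \cdot [f^2(\rho_i)] \cdots [f^{r-1}(\rho_i)], \]
with no cancellation between consecutive factors. The base case is the defining identity $f(e_i) = e_i \cdot \rho_i$. The inductive step uses the relative-train-track legality condition on turns---built into the definition of a unipotent representative---to ensure that the turn between $e_i$ and $\rho_i$, and the turns between consecutive iterates $[f^k(\rho_i)]$ and $[f^{k+1}(\rho_i)]$, do not cancel. Because $\Phi$ is UPG and $[f(\rho_i)] \neq \rho_i$, the closed path $\rho_i$ represents a conjugacy class with polynomial growth of strictly positive degree, so $|[f^k(\rho_i)]|$ grows at least linearly in $k$. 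The ray $R = e_i \rho_i [f(\rho_i)] [f^2(\rho_i)] \cdots$ is therefore genuinely infinite, and any prefix $\tau$ of $R$ appears as a prefix of $[f^r(e_i)]$ once $r$ is sufficiently large.

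For the main claim, replacing $\sigma$ by $\sigma^{-1}$ in the reversed-orientation case, fix an occurrence of $e_i$ in $\sigma$ and write $\sigma = \sigma_1 \cdot e_i \cdot \sigma_2$. Then
\[ f^r(\sigma) = f^r(\sigma_1) \cdot [f^r(e_i)] \cdot f^r(\sigma_2), \]
and $[f^r(\sigma)]$ is obtained by tightening at the two flanking junctions. The classical Bounded Cancellation Lemma for the homotopy equivalence $f$ provides a constant $C$ such that each application of $f$ introduces at most $C$ new letters of cancellation at any junction; iterating, the total cancellation at each junction is bounded by $Cr$, while $|[f^r(e_i)]|$ grows at least quadratically in $r$ thanks to the linear-in-$k$ growth of $|[f^k(\rho_i)]|$. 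For sufficiently large $r$ the surviving portion of $[f^r(e_i)]$ inside $[f^r(\sigma)]$ is therefore much longer than $|\tau|$; provided the left-hand cancellation does not eat past $e_i$, this surviving portion contains $\tau$ as an initial subword, hence as a subpath of $[f^r(\sigma)]$.

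The main obstacle is guaranteeing that some occurrence of $e_i^{\pm 1}$ survives with the ray $R$ (or $\bar R$) emerging intact from the correct end. I would handle this by a finite descent: if the left-hand cancellation were to eat past $e_i$ into $\rho_i$, then the matching tail of $[f^r(\sigma_1)]$ must itself be produced by an occurrence of $e_i^{-1}$ somewhere in $\sigma$---because $\rho_i$ lies in the strictly lower stratum $\Gamma^{(i-1)}$ while $e_i$ does not, so only another $e_i^{-1}$-traversal can supply a $[f^r(e_i)]^{-1}$ suffix under $f^r$. One then restarts the argument at that occurrence with the orientation reversed, producing $\bar\tau$ instead. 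Since $\sigma$ has finitely many traversals of $e_i^{\pm 1}$, this descent terminates, and at the final step either $\tau$ or $\bar\tau$ appears as a subpath of $[f^r(\sigma)]$.
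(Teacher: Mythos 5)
The paper does not prove this proposition; as the bracketed citation indicates, it is quoted from Bestvina--Feighn--Handel and used as a black box in the proof of Lemma~\ref{lem:polynomial-growth-splitting}. There is therefore no in-paper argument to compare your sketch against, but the sketch itself has gaps worth flagging.

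Your opening step---the exponential splitting $[f^r(e_i)] = e_i\,\rho_i\,[f(\rho_i)]\cdots[f^{r-1}(\rho_i)]$ with no cancellation at the junctions, giving an infinite eigenray---is correct and is the right starting point. The gap is in the cancellation estimate. The Bounded Cancellation Lemma gives a constant $C(f)$ for a single application of $f$; it does \emph{not} follow that the cancellation at a junction after $r$ iterates is bounded by $Cr$. The standard recursion $C(g\circ h)\le C(g)+L(g)C(h)$ (with $L$ a Lipschitz constant) gives a priori $C(f^r)=O(L(f)^r)$, and even after exploiting unipotence one only obtains a polynomial bound whose degree need not be smaller than the growth degree of $|[f^r(e_i)]|$ itself, so ``the surviving portion is much longer than $|\tau|$'' is not secured by this route. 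The BFH proof of their Proposition~3.18 avoids cancellation estimates altogether: the extra axioms of a \emph{unipotent representative}---the ``tighter control of certain path splittings'' alluded to in Section~\ref{s:autos}---furnish a complete splitting of any path $\sigma$ into single edges and exceptional subpaths that is preserved with \emph{zero} cancellation at its junctions under every iterate of $f$. Your sketch invokes only generic relative-train-track legality and is silently relying on this stronger structure. A second, related gap is in your descent step: a terminal $e_i^{-1}$ of $[f^r(\sigma_1)]$ need not come from a traversal of $e_i^{\pm1}$ by $\sigma_1$; it can be produced inside $f^r(e_k)$ for a higher-stratum edge $e_k$ crossed by $\sigma_1$, since the corresponding suffix $\rho_k$ may itself cross $e_i$. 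So the descent does not visibly terminate at an occurrence of $e_i^{\pm1}$ in $\sigma$ as written, and again it is the complete-splitting machinery that removes this obstruction.
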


\begin{thm}[{\cite[Theorem~5.1.8]{BestvinaFeighnHandel2000}}]
    Every UPG element in $\Out(F)$ admits a unipotent representative.
\end{thm}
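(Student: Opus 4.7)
The plan is to begin with a relative train track representative of $\Phi$ provided by Bestvina--Handel~\cite{BestvinaHandel1992}, and then to massage it into upper triangular form. Such a representative $(f_0, \Gamma_0)$ comes equipped with an $f_0$-invariant filtration whose strata have associated transition matrices. Since $\Phi$ is polynomially growing, no stratum can be exponential, so every stratum transition matrix is (after possibly passing to a power, which we are allowed by Lemma~\ref{lem:upg-finite-index}) a permutation matrix. Because $\Phi$ is unipotent, the induced map on $H_1(\Gamma_0; \mathbb{Z})$ is unipotent, and a block-by-block analysis forces each of these permutations to be unipotent, hence the identity.

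With identity transition matrices on each stratum, one refines the filtration so that every stratum consists of a single edge $e_i$. The identity condition forces $f(e_i)$ to traverse $e_i$ exactly once in the positive direction, so $f(e_i) = \alpha_i \cdot e_i \cdot \beta_i$ for paths $\alpha_i, \beta_i$ in $\Gamma^{(i-1)}$. I would then perform a homotopy supported near $i(e_i)$ (an elementary folding or slide) to absorb $\alpha_i$ into the lower strata, yielding $f(e_i) = e_i \cdot \rho_i$ with $\rho_i$ a path in $\Gamma^{(i-1)}$. The fact that $f$ is a homotopy equivalence, together with the inductive hypothesis that every vertex of $\Gamma^{(i-1)}$ is already fixed, forces $\rho_i$ to close up to a loop based at $i(\overline{e_i})$, giving the required upper triangular form.

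The main obstacle is the simultaneous bookkeeping of three requirements: that $f$ fixes every vertex, that the $\rho_i$ lie in the correct lower subgraph, and that the relative train track hypotheses survive all of the modifications. The natural strategy is an induction on the height of the filtration, adjusting $f$ only on the top stratum by a homotopy rel $\Gamma^{(i-1)}$ at each step; the identity transition matrix guarantees that such a homotopy exists, and preserves the train track structure on the lower subgraph. The final verification, which is the technical heart of the argument, is that the combinatorics of turns at each vertex are compatible with the additional axioms imposed by \cite[Definition~3.13]{BestvinaFeighnHandel2005}; here one must use the full strength of the unipotent (rather than merely polynomially growing) hypothesis to arrange that illegal turns are not created during the homotopies above.
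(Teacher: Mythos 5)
This statement is cited by the paper from \cite[Theorem~5.1.8]{BestvinaFeighnHandel2000} and is not proved there, so there is no ``paper proof'' to compare against; I will evaluate your sketch on its own terms. The overall plan (start from a Bestvina--Handel relative train track, use polynomial growth to rule out exponential strata, use unipotence on $H_1$ to kill the residual permutations, then homotope to a fixed-vertex upper-triangular map) is the correct strategy and does reflect the shape of the argument in Bestvina--Feighn--Handel.

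There are, however, two substantive problems. First, the parenthetical ``after possibly passing to a power, which we are allowed by Lemma~\ref{lem:upg-finite-index}'' is both misplaced and dangerous: that lemma promotes a polynomially growing $\Phi$ to a UPG power, but here $\Phi$ is already assumed UPG, and more importantly a unipotent representative of $\Phi^k$ is not a unipotent representative of $\Phi$, so one simply cannot replace $\Phi$ by a power inside this proof. (The whole reason the paper introduces Lemma~\ref{lem:upg-finite-index} separately is precisely so that passage to a power happens at the level of finite-index subgroups of the mapping torus, not inside this theorem.) In any case no power is needed for the claim you invoke it for: non-exponential strata of a relative train track already have permutation transition matrices. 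Second, the ``block-by-block analysis'' linking unipotence of $\Phi_*$ on $H_1(\mathbb{F}_m;\ZZ)$ to triviality of the stratum permutations is exactly the technical content of the theorem and cannot be waved through. The filtration of $\Gamma_0$ gives a block-triangular structure on the chain group $C_1(\Gamma_0)$, but $H_1(\Gamma_0)$ is a subquotient (cycles modulo boundaries) whose induced block structure depends on the vertices and how they move under $f_0$; for instance a non-trivial permutation of edges in a spanning tree can act trivially on $H_1$, so the implication ``unipotent on $H_1$ $\Rightarrow$ identity permutation on each stratum'' requires the additional normalisations (e.g.\ vertices eventually fixed, filtration compatible with a maximal tree) that the actual proof arranges first. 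Your final paragraph honestly acknowledges the remaining bookkeeping, but that bookkeeping, together with the $H_1$ step, is where the real work of \cite[\S5]{BestvinaFeighnHandel2000} lives and the sketch does not supply it.
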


\section{Acylindrical actions}
\label{s:acylindrical}

The aim of this section is to give a common framework for showing that certain actions of free-by-cyclic groups on trees are acylindrical. We will use this to prove that the actions obtained in the work of Guirardel--Horbez in \cite[Section~6]{GuirardelHorbez2021} and Andrew--Martino \cite[Proposition~5.2.2]{AndrewMartino2022} (see Lemma~\ref{lem:linear-growth-splitting}) are 4-acylindrical.

Let $\Phi \in \Out(F)$ be an outer automorphism. In order to be consistent with the terminology established in \cite{GuirardelHorbez2021}, we will consider the mapping torus of an element in $\Out(F)$ as a subgroup of $\Aut(F)$. To that end, let $G_{\Phi}$ denote the preimage of the cyclic subgroup generated by $\Phi$ in $\Out(F)$ under the natural quotient map $\Aut(F) \twoheadrightarrow \Out(F)$. Then for any automorphism $\varphi$ representing $\Phi$ we have that \[G_{\Phi} = \langle \Inn(F), \varphi \rangle \cong F \rtimes_{\Phi} \ZZ.\] 

A \emph{$\Phi$-invariant splitting of $F$} is an $F$-tree $\mathcal{T}$, such that for each element $\alpha \in G_{\Phi}$ there is an isometry $I_\alpha \colon \mathcal{T} \to \mathcal{T}$ so that for any $g \in F$ and $x \in \mathcal{T}$, 
 \[I_{\alpha}(gx) = \alpha(g) I_{\alpha}(x).\] We will moreover assume that the $F$-action on $\mathcal{T}$ is irreducible. Then, the assignment $\alpha \mapsto I_{\alpha}$ determines an action of $G_{\Phi}$ on the tree $\mathcal{T}$.

\begin{defn}[Aligned splittings]\label{defn:aligned_splitting}
    Let $\Phi \in \Out(F)$. A \emph{$\Phi$-aligned tree} $\mathcal{T}$ is a $\Phi$-invariant bipartite $F$-tree $(\mathcal{T}, V_0(\mathcal{T}), V_1(\mathcal{T}))$, such that the following properties are satisfied.
     \begin{enumerate}[label=(\alph*)]
        \item\label{it:aligned_splitting_ffs} For any two distinct vertices $w_1, w_2 \in V_0(\mathcal{T})$, the intersection $\Stab_{G_{\Phi}}(w_1) \cap \Stab_{G_{\Phi}}(w_2) \cap \Inn(F)$ is trivial.
     
        \item\label{it:aligned_splittings_black} There exists an injective map  
        \[\begin{split}
            V_1(\mathcal{T}) &\to \pi^{-1}(\Phi) \\
             v&\mapsto \varphi_v
        \end{split}\]
        where $\pi \colon \Aut(F) \twoheadrightarrow \Out(F)$ is the natural quotient. The automorphism $\varphi_v$, considered as an element of $G_{\Phi}$, fixes the ball of radius one around $v$ pointwise.
       
     \end{enumerate}
     We say that a tree $\mathcal{T}$ is \emph{aperiodic $\Phi$-aligned} if it is $\Phi$-aligned and if for any two vertices $v_0$ and $v_1$ in $V_{1}(\mathcal{T})$ such that $\varphi_{v_0}^k = \varphi_{v_1}^k$ for some positive integer $k$, we have that $\varphi_{v_0} = \varphi_{v_1}$.
\end{defn}

\begin{prop}\label{prop:aligned_acylindrical_trees}
    The action of $G_{\Phi}$ on a non-trivial aperiodic $\Phi$-aligned tree $\mathcal{T}$ is $4$-acylindrical.
\end{prop}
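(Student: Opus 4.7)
The plan is to show directly that if $\alpha \in G_{\Phi}$ pointwise fixes a segment~$\sigma$ of length at least~$5$ in~$\mathcal{T}$, then $\alpha = 1$. Let $k \in \ZZ$ be chosen so that $\pi(\alpha) = \Phi^k$. I would split on whether $k = 0$ or $k \neq 0$.

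The case $k = 0$ should be quick: then $\alpha \in \Inn(\mathbb{F}_n)$, and condition~(\ref{it:aligned_splitting_ffs}) of Definition~\ref{defn:aligned_splitting} says that no non-trivial inner automorphism can fix two distinct $V_0(\mathcal{T})$-vertices. Hence the fixed-point tree $\Fix(\alpha) \subseteq \mathcal{T}$ of a non-trivial such $\alpha$ contains at most one $V_0$-vertex, which, using bipartiteness of $\mathcal{T}$, confines $\Fix(\alpha)$ to the closed $1$-neighbourhood of that vertex and so forces $\operatorname{diam}(\Fix(\alpha)) \leq 2$. Since $\sigma \subseteq \Fix(\alpha)$ has length~$5$, this is a contradiction, so $\alpha = 1$.

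The case $k \neq 0$ is the substantive one. The first observation is that $|\sigma| \geq 5$ together with bipartiteness guarantees the existence of two distinct \emph{interior} $V_1$-vertices $v \neq v'$ of $\sigma$, meaning vertices of $V_1(\mathcal{T})$ both of whose $V_0$-neighbours happen to lie in $\sigma$; length~$5$ is the smallest length for which this holds regardless of the bipartition class of the endpoints of $\sigma$, which is the source of the constant~$4$ in the statement. Using condition~(\ref{it:aligned_splittings_black}) one may write $\alpha = \varphi_v^k \cdot \ad_{g_v}$ for some $g_v \in \mathbb{F}_n$, since $\varphi_v^k$ and $\alpha$ both project to $\Phi^k$. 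Because $\varphi_v$ pointwise fixes the ball of radius~$1$ about~$v$, so does $\varphi_v^k$; combining with the assumption that $\alpha$ fixes $v$ and both of its $V_0$-neighbours in $\sigma$ then shows that $\ad_{g_v}$ fixes two distinct $V_0$-vertices, so by condition~(\ref{it:aligned_splitting_ffs}), $\ad_{g_v} = 1$ and thus $g_v = 1$ (recalling $Z(\mathbb{F}_n) = 1$). Hence $\alpha = \varphi_v^k$, and running the same argument at $v'$ gives $\alpha = \varphi_{v'}^k$. Therefore $\varphi_v^k = \varphi_{v'}^k$, and by the aperiodicity hypothesis (applied to $|k|$ and using $\varphi_v^{-1}, \varphi_{v'}^{-1}$ if $k < 0$), $\varphi_v = \varphi_{v'}$. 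Injectivity of $v \mapsto \varphi_v$ then forces $v = v'$, the desired contradiction.

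The step I expect to be the main obstacle is the elimination of the inner-automorphism factor in $\alpha = \varphi_v^k \ad_{g_v}$: the aperiodicity assumption is phrased as an exact identity $\varphi_v^k = \varphi_{v'}^k$ without any inner correction, and if $\ad_{g_v}$ is not killed first, knowing only that $\alpha = \varphi_v^k \ad_{g_v}$ yields no contradiction. This is precisely where the ``interior'' hypothesis on $v$ is essential: having both $V_0$-neighbours of $v$ inside $\sigma$ provides two distinct $V_0$-vertices fixed by $\ad_{g_v}$, which is exactly the input condition~(\ref{it:aligned_splitting_ffs}) needs. Ensuring this cleanup is simultaneously available at two distinct interior $V_1$-vertices $v, v'$ is what dictates the length threshold and ties together the bipartite structure with both conditions in the definition of a $\Phi$-aligned tree.
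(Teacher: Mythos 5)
Your proof is correct and follows essentially the same route as the paper's: write the fixing element as $\varphi_v^k$ times an inner automorphism, use condition~(\ref{it:aligned_splitting_ffs}) and the fact that $\varphi_v$ fixes the $1$-ball about $v$ to kill the inner factor, then run this at two distinct interior $V_1$-vertices and appeal to aperiodicity plus injectivity of $v \mapsto \varphi_v$ to reach a contradiction. The only cosmetic difference is that you split out $k=0$ as a separate case (via the diameter bound on $\Fix(\ad_a)$), whereas the paper absorbs it by observing that once the inner part is killed, $g = \varphi_v^k$ with $g \neq 1$ forces $k \neq 0$.
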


 \begin{proof}
     Let $L$ be a line segment of length $5$ in $\mathcal{T}$ and suppose that $L$ is fixed pointwise by a non-trivial element $g \in G_{\Phi}$. Let $v\in V_1(\mathcal{T})$ and suppose that $v$ lies between two vertices $w_0,w_1 \in V_0(\mathcal{T})$ in $L$. Let $\varphi_v \in G_{\Phi}$ denote a lift of $\Phi$ which fixes the ball of radius one around~$v$. 
     
     The element $g$ can be expressed as $g = \ad_a \cdot \varphi_v^{k}$, for some $a \in F$ and $k \in \ZZ$. Let $L_0$ be the subsegment of $L$ of length two which consists of the vertices $w_{0}$, $v$ and $w_{1}$, and the edges between them. Since $\varphi_v$ fixes $L_0$ pointwise, it must be the case that the element $g \cdot \varphi_v^{-k} = \ad_a$ also fixes $L_0$ pointwise. However, the intersection $\Stab(w_{0}) \cap \Stab(w_{1}) \cap \Inn(F)$ is trivial. It follows that the element $a$ is trivial and $g = \varphi_v^k$, for some non-zero integer $k$.

     Since $L$ is a segment of length $5$, there exist two distinct vertices $v_1, v_2 \in V_1(\mathcal{T})$ in $L$, each of which lies between two vertices in $V_0(\mathcal{T})$ in $L$. Hence for each $i = 1,2$, there exists an integer $k_i \in \ZZ$ such that $g = \varphi_i^{k_i}$, where $\varphi_i$ a lift of $\Phi$ which fixes a ball of radius one around the vertex $v_i$. Hence $\varphi_1^{k_1} = \varphi_2^{k_2}$. Since $\varphi_1$ and $\varphi_2$ are lifts of $\Phi$, we must have that $k_1 = k_2$. By aperiodicity, we must have that $\varphi_1 = \varphi_2$. However, this contradicts injectivity in Item~\ref{it:aligned_splittings_black} in the definition of $\Phi$-aligned splittings.
\end{proof}

An outer automorphism $\Phi \in \Out(F)$ is said to be \emph{neat} if for every element $x \in F$ and every automorphism $\varphi$ which represents $\Phi$, if $\varphi^k(x) = x$ for some non-zero integer $k$ then $\varphi(x) = x$.

\begin{lem}\label{lemma:non-periodic_trees}
    If $\Phi \in \Out(F)$ is neat then every $\Phi$-aligned tree $\mathcal{T}$ is aperiodic.
\end{lem}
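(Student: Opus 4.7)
The plan is to translate the hypothesis $\varphi_{v_0}^k = \varphi_{v_1}^k$ into a cocycle identity in $\mathbb{F}_n$, extract a periodicity statement for the inner part, and then invoke neatness.

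First I would note that $\varphi_{v_0}, \varphi_{v_1} \in \pi^{-1}(\Phi)$ differ by an inner automorphism, so I can write $\varphi_{v_1} = \ad_a \circ \varphi_{v_0}$ for a uniquely determined $a \in \mathbb{F}_n$ (uniqueness and the reduction to $n \geq 2$ use that $Z(\mathbb{F}_n)$ is trivial; if $n = 1$ then $\Inn(\mathbb{F}_n) = 1$ and the statement is immediate). Proving $\mathcal{T}$ is aperiodic amounts to showing $a = 1$.

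Using the identity $\varphi_{v_0} \circ \ad_a \circ \varphi_{v_0}^{-1} = \ad_{\varphi_{v_0}(a)}$ iteratively, one obtains
\[
\varphi_{v_1}^k \;=\; \ad_{\,a \cdot \varphi_{v_0}(a) \cdot \varphi_{v_0}^2(a) \cdots \varphi_{v_0}^{k-1}(a)\,} \circ \varphi_{v_0}^k.
\]
Setting $\varphi_{v_1}^k = \varphi_{v_0}^k$ and using triviality of the centre gives the cocycle identity
\[
a \cdot \varphi_{v_0}(a) \cdot \varphi_{v_0}^2(a) \cdots \varphi_{v_0}^{k-1}(a) \;=\; 1 \quad \text{in } \mathbb{F}_n.
\]
The main technical step is the following: applying $\varphi_{v_0}$ to this identity yields a second cocycle relation (the same product, but shifted by one power of $\varphi_{v_0}$), and comparing it with the original, the inner $k-2$ factors cancel and one is left with $\varphi_{v_0}^k(a) = a$.

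At this point the hypothesis does all the work: since $\Phi$ is neat and $k \neq 0$, $\varphi_{v_0}^k(a) = a$ forces $\varphi_{v_0}(a) = a$. Plugging back into the cocycle identity, the product collapses to $a^k = 1$, and torsion-freeness of $\mathbb{F}_n$ gives $a = 1$. Hence $\varphi_{v_1} = \varphi_{v_0}$, which is what aperiodicity demands. I expect the only delicate point to be the algebraic manipulation producing $\varphi_{v_0}^k(a) = a$ cleanly; the invocation of neatness and the final torsion-free step are then immediate.
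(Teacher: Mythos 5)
Your proof is correct and is essentially identical to the paper's: write $\varphi_{v_1} = \ad_a\circ\varphi_{v_0}$, derive the cocycle identity $a\,\varphi_{v_0}(a)\cdots\varphi_{v_0}^{k-1}(a)=1$, deduce $\varphi_{v_0}^k(a)=a$, and use neatness together with torsion-freeness of $\mathbb{F}_n$ to conclude $a=1$. The only difference is that you spell out the shift-and-cancel step yielding $\varphi_{v_0}^k(a)=a$ and the appeal to triviality of the centre, which the paper leaves implicit.
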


\begin{proof}
    Let $\varphi_1$ and $\varphi_2$ be two lifts of $\Phi$ in $G_{\Phi}$ such that $\varphi_1^k = \varphi_2^k$ for some non-zero integer $k$. Since each $\varphi_i$ is a lift of $\Phi$, we must have that $\varphi_1 = \ad_u \cdot \varphi_2$ for some $u \in F$. It follows that $\varphi_2^k = (\ad_u \cdot\varphi_2)^k$, and hence 
     \[ u  \varphi_2(u) \cdots \varphi_2^{k-1}(u) = 1.\]
     Thus, $\varphi_2^k(u) = u$. Since $\Phi$ is neat, it follows that $\varphi_2(u) = u$ and $u^k = 1$, and hence $u$ is the trivial element in $F$. In particular, we must have that $\varphi_1 = \varphi_2$.
\end{proof}

\begin{lem}[Bestvina--Feighn--Handel {\cite[Proposition~4.41]{BestvinaFeighnHandel2005}}]
    If $\Phi \in \Out(F)$ is UPG then it is neat.
\end{lem}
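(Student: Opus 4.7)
The plan is to use the unipotent topological representative $(f, \Gamma)$ of $\Phi$ provided by the preceding theorem, and to exploit its upper-triangular filtration together with the dynamical control given by Proposition~\ref{prop:eigen-rays}. Given $\varphi$ representing $\Phi$ and $x \in \mathbb{F}_n$ with $\varphi^k(x) = x$ for some non-zero $k$, I would proceed in two main stages: first show that $\Phi$ fixes the conjugacy class $[x]$, and then upgrade this to $\varphi(x) = x$.

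For the first stage, note that the cyclic word-lengths $|\overline{\varphi^{jk}(x)}|_X = |\bar{x}|_X$ are constant in $j$, so the conjugacy class $[x]$ has bounded orbit under $\Phi$. Let $\sigma$ be the cyclically reduced loop representing $[x]$ in $\Gamma$. If $\sigma$ traversed any edge $e_i$ with $[f(\rho_i)] \neq \rho_i$, Proposition~\ref{prop:eigen-rays} would force $|[f^r(\sigma)]| \to \infty$, contradicting boundedness along the subsequence $r = jk$. Hence $\sigma$ is supported in the subgraph $\Gamma' \subseteq \Gamma$ consisting of edges whose associated path $\rho_i$ is $f$-fixed, and on such loops a direct analysis of the upper-triangular action of $f$ yields that $[\sigma]$ is $f$-invariant, so $\Phi([x]) = [x]$.

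For the second stage, write $\varphi(x) = w x w^{-1}$ for some $w \in \mathbb{F}_n$. Iteration yields $\varphi^j(x) = W_j x W_j^{-1}$ with $W_j = \varphi^{j-1}(w) \cdots \varphi(w)\,w$, so the hypothesis $\varphi^k(x) = x$ forces $W_k \in C_{\mathbb{F}_n}(x) = \langle z \rangle$, where $z$ is the primitive root of $x$. Since $\varphi^k(z)^q = \varphi^k(x) = x = z^q$ with $q \neq 0$ implies $\varphi^k(z) = z$ in the free group $\mathbb{F}_n$, the same reduction applies with $z$ in place of $x$, and it suffices to treat the case $x = z$ primitive; here $\varphi(x) = x$ is equivalent to $w \in \langle z \rangle$.

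The main obstacle is the final deduction $w \in \langle z \rangle$ from $W_k = \varphi^{k-1}(w) \cdots w \in \langle z \rangle$. On $H_1(\mathbb{F}_n;\mathbb{Z})$, the operator $I + \Phi + \cdots + \Phi^{k-1}$ equals $k$ times a unipotent transformation, so the induced relation $S_k(\bar w) \in \mathbb{Z}\bar z$ gives the conclusion only rationally, not integrally. I would complete the argument by inducting on the filtration level of the topmost edge used by $\sigma$ in $\Gamma'$, using the upper-triangular action of $f$ on $\Gamma'$ to pin down $w$ edge by edge. This is precisely where unipotency, not merely polynomial growth, is essential: a non-trivial root-of-unity eigenvalue of $\Phi$ acting on $H_1$ would permit a genuinely periodic-but-not-fixed element, and unipotency excludes this possibility.
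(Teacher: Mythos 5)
Note first that the paper does not prove this lemma; it is quoted directly from Bestvina--Feighn--Handel \cite{BestvinaFeighnHandel2005}, so there is no in-paper argument to compare against, and the proposal must stand on its own. Your strategy (confine the loop $\sigma$ representing $[x]$ to the linear/constant-growth subgraph $\Gamma'$ using Proposition~\ref{prop:eigen-rays}, conclude $\Phi([x])=[x]$, then pass to the primitive root $z$ and analyse the conjugator $w$ with $\varphi(z)=wzw^{-1}$) is a plausible outline, and the eigen-ray step correctly forces $\sigma$ into $\Gamma'$.

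However, the proposal leaves unproved exactly the two steps that carry the actual content. First, once $\sigma$ lies in $\Gamma'$, you assert that ``a direct analysis of the upper-triangular action of $f$ yields that $[\sigma]$ is $f$-invariant.'' But on $\Gamma'$ the map $f$ is not the identity: linearly growing edges satisfy $f(e_i)=e_i\rho_i$ with $\rho_i$ a nontrivial $f$-fixed loop, so $[f^k(\sigma)]=\sigma$ does not trivially give $[f(\sigma)]=\sigma$ --- this is precisely the conjugacy-class form of the lemma in the linear-growth case and needs its own induction on the filtration of $\Gamma'$. Second, the deduction $w\in\langle z\rangle$ from $W_k=\varphi^{k-1}(w)\cdots\varphi(w)\,w\in\langle z\rangle$ is, as you say, the ``main obstacle''; you correctly observe that the abelianised relation only yields the conclusion rationally and then state you ``would complete the argument by inducting on the filtration level'' without exhibiting a decreasing invariant, a base case, or a mechanism by which the upper-triangular structure rules out a $\varphi$-periodic correction to $w$. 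Both gaps must be filled before this can be regarded as a proof.
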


Let $\IA$ denote the finite index normal subgroup of $\Out(F)$ which is the kernel of the natural quotient map \[\Out(F) \twoheadrightarrow \GL(n, \ZZ / 3\ZZ).\] 

 \begin{thm}[{Guirardel--Horbez \cite[Theorem~1.8]{GuirardelHorbez2021}}]\label{thm:GH}
For every outer automorphism $\Phi \in \IA$, there exists a $\Phi$-aligned $F$-tree $\mathcal{T}_{\Phi}$. The tree $\mathcal{T}_{\Phi}$ is non-trivial if $\Phi$ has infinite order in $\Out(F)$ and $F$ admits a non-trivial $\Phi$-invariant free splitting. Moreover, the assignment 
\[\Phi \mapsto \mathcal{T}_{\Phi} \]is $\Out(F)$-equivariant, where the action of $\Out(F)$ on $\IA$ is by conjugation. 
\end{thm}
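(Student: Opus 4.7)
The plan is to build $\mathcal{T}_\Phi$ as a bipartite $\mathbb{F}_n$-tree whose $V_0$-vertices are indexed by a canonical collection of $\Phi$-invariant free splittings of $\mathbb{F}_n$ and whose $V_1$-vertices are indexed by distinguished lifts of $\Phi$ to the coset $\pi^{-1}(\Phi) \subset \Aut(\mathbb{F}_n)$, with an edge between $w \in V_0$ and $\varphi \in V_1$ recording that $\varphi$, viewed in $G_\Phi$, fixes $w$ together with its incident edges. The bipartite picture is then natural: the $V_0$-vertices are stabilised by conjugates of free subgroups coming from vertex groups of the chosen splittings, while the $V_1$-vertices are stabilised by the chosen lifts.

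The first task is to select the $V_0$-vertices. Starting from the hypothesis that $\mathbb{F}_n$ admits a non-trivial $\Phi$-invariant free splitting, I would extract a canonical family of such splittings via a JSJ-type analysis performed in the deformation space of free splittings of $\mathbb{F}_n$, together with a coherent labelling of their vertex groups. Property~\ref{it:aligned_splitting_ffs} of Definition~\ref{defn:aligned_splitting} is then built into the construction: vertex stabilisers of any free splitting of $\mathbb{F}_n$ already have trivial pairwise intersection in $\Inn(\mathbb{F}_n) \cong \mathbb{F}_n$.

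For the $V_1$ class, to each $w \in V_0(\mathcal{T}_\Phi)$ I would associate every $\varphi \in \pi^{-1}(\Phi)$ that fixes $w$ and all edges incident to $w$ in $G_\Phi$; such lifts exist precisely because the corresponding splitting is $\Phi$-invariant. To show that the resulting bipartite graph is actually a tree, I would mirror the argument of Proposition~\ref{prop:aligned_acylindrical_trees}: any non-trivial cycle would yield two distinct lifts $\varphi_1, \varphi_2 \in \pi^{-1}(\Phi)$ coinciding on a sufficiently long segment, and a neatness-type argument (using \cite[Proposition~4.5]{BestvinaFeighnHandel2005} on the polynomially growing components, after passing to a suitable power via Lemma~\ref{lem:upg-finite-index}, then descending) would force $\varphi_1=\varphi_2$, contradicting the required injectivity of $V_1 \to \pi^{-1}(\Phi)$. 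Non-triviality of $\mathcal{T}_\Phi$ under the hypotheses comes from the JSJ machinery: if $\Phi$ has infinite order and preserves a non-trivial free splitting, the canonical family is non-empty and not reducible to a single vertex.

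Finally, $\Out(\mathbb{F}_n)$-equivariance under conjugation follows because every step --- the JSJ family of invariant splittings, the distinguished lifts, the incidence relation --- is natural in $\Phi$, and conjugation by $\Psi \in \Out(\mathbb{F}_n)$ carries $\Phi$-invariant data to $\Psi\Phi\Psi^{-1}$-invariant data. The main obstacle is the combinatorial verification that the bipartite graph constructed above is genuinely a tree, without cycles, and that the $V_0$-vertices capture the full structure of $\Phi$-invariant free splittings; this requires simultaneous control of how multiple lifts of $\Phi$ behave across the whole family of chosen splittings, and is the technical heart of \cite{GuirardelHorbez2021}. The $\IA$ hypothesis enters here to guarantee the torsion-freeness that makes the neat-type arguments available in their cleanest form.
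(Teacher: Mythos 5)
The paper does not reprove the Guirardel--Horbez theorem: it takes the bipartite tree $U_H^1$ constructed in \cite[Theorem~6.12]{GuirardelHorbez2021} as a black box, verifies that it satisfies Definition~\ref{defn:aligned_splitting} (using \cite[Proposition~6.2]{GuirardelHorbez2021} for the existence of a maximal $\Phi$-invariant free splitting, and \cite[Lemma~2.12]{GuirardelHorbez2021} to see that edge stabilisers in that splitting are generated by lifts of $\Phi$), and then cites \cite[Theorem~1.8]{GuirardelHorbez2021} directly for non-triviality and equivariance. You instead attempt to reconstruct the tree from scratch, which is a much larger undertaking and has concrete gaps.

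The most serious gap is your proposed ``no cycles'' argument. You suggest mirroring Proposition~\ref{prop:aligned_acylindrical_trees} to show the bipartite graph is a tree: a cycle should give two distinct lifts coinciding on a long segment, and neatness should force them to agree. But that argument runs in the wrong direction. Proposition~\ref{prop:aligned_acylindrical_trees} \emph{starts} from the hypothesis that $\mathcal{T}$ is a tree and uses aperiodicity to bound the length of fixed segments; it does not produce a fixed segment from a cycle in a graph. A cycle in a bipartite graph gives you a sequence of distinct $V_1$-vertices with distinct lifts, and no element of $G_\Phi$ is forced to fix anything along the way, so neatness never enters. The tree-ness of $U_H^1$ in \cite{GuirardelHorbez2021} comes from the fact that it is obtained from an actual $\mathbb{F}_n$-tree (a maximal $\Phi$-invariant free splitting) by an operation that preserves tree-ness (collapsing $\sim$-classes of edges into $V_1$-vertices), not from a dynamical constraint on lifts. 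Relatedly, you invoke unipotent representatives via Lemma~\ref{lem:upg-finite-index} and \cite[Proposition~4.5]{BestvinaFeighnHandel2005}, but Theorem~\ref{thm:GH} as stated applies to all $\Phi \in \IA$, not only polynomially growing ones, so that machinery is not available in this generality.

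Two smaller issues. First, your $V_0$-vertices are ``indexed by a canonical collection of $\Phi$-invariant free splittings,'' whereas in the actual construction $V_0(\mathcal{T})$ is the vertex set of a \emph{single} maximal $\Phi$-invariant free $\mathbb{F}_n$-splitting; it is the $V_1$-vertices that encode additional data (namely $\sim$-equivalence classes of edges of that splitting, each carrying a distinguished lift). Second, the $\IA$ hypothesis does not enter through ``torsion-freeness that makes the neat-type arguments available''; it enters through \cite[Lemma~2.12]{GuirardelHorbez2021}, which uses it to show that the $G_\Phi$-action on the quotient graph of the free splitting is trivial, so that edge stabilisers are generated by genuine lifts of $\Phi$ (and through the canonicity that makes the $\Out(\mathbb{F}_n)$-equivariance work). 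Your core intuition --- that $V_0$ comes from a free splitting and $V_1$ from distinguished lifts fixing their neighbourhoods --- is correct and is exactly what the verification checks, but the surrounding reconstruction does not hold together.
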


\begin{proof}
    Let $H$ be the cyclic subgroup of $\Out(F)$ generated by $\Phi$. Let $(U_H^1, V_0(\mathcal{T}), V_1(\mathcal{T}))$ be the bipartite $\Phi$-invariant tree obtained in \cite[Theorem~6.12]{GuirardelHorbez2021}. Let us write $\mathcal{T}_{\Phi}$ to denote $U_H^1$.
    
    By construction of the tree in \cite[Theorem~6.12]{GuirardelHorbez2021}, the groups $\Stab_{G_{\Phi}}(w) \cap \Inn(F)$, for $w \in V_0(\mathcal{T})$, are exactly the vertex stabilisers of a free splitting of the free group $\Inn(F) \cong F$. Hence, for any two distinct vertices $w_1, w_2 \in V_0(\mathcal{T})$, the intersection  $\Stab_{G_{\Phi}}(w_1) \cap \Stab_{G_{\Phi}}(w_2) \cap \Inn(F)$ is trivial.

    Let $\mathcal{T}$ be a maximal $\Phi$-invariant free $F$-splitting. Such a splitting exists by \cite[Proposition~6.2]{GuirardelHorbez2021}. The action of $F \simeq \Inn(F)$ extends to an action of $G_{\Phi}$ on $\mathcal{T}$ with infinite cyclic edge stabilisers. The induced $G_{\Phi}$-action on the quotient $\mathcal{T} / F$ is trivial by \cite[Lemma~2.12]{GuirardelHorbez2021}, and hence the $G_{\Phi}$-stabiliser of each edge of $\mathcal{T}$ is generated by a lift of $\Phi$ in $G_{\Phi}$. Let $\sim$ denote the equivalence relation on the set of edges of $\mathcal{T}$ such that two edges are defined to be $\sim$-equivalent whenever their $G_{\Phi}$-stabilisers are equal. 

    By construction, the vertices in $V_1(\mathcal{T})$ correspond to $\sim$-equivalence classes of edges in $\mathcal{T}$. Let us define a map 
     \[\begin{split}
            V_1(\mathcal{T}) &\to \pi^{-1}(\Phi) \\
             v&\mapsto \varphi_v
        \end{split}\]
    where $\varphi_v$ denotes the representative of $\Phi$ which generates the stabiliser of any edge in the equivalence class of the vertex $v$. This map is clearly well-defined and injective. Suppose that $u \in V_0(\mathcal{T})$ is adjacent to the vertex $v \in V_1(\mathcal{T})$ in $\mathcal{T}_{\Phi}$. Then $u$ is a vertex in the original tree $\mathcal{T}$ which is adjacent to an edge $e$ in $\mathcal{T}$ whose stabiliser is given by $\Stab_{G_{\Phi}}(e) = \langle \varphi_v \rangle$. But since $u$ is adjacent to the edge $e$, $\varphi_v$ also stabilises the vertex $u$.

    This proves that the tree $(\mathcal{T}_{\Phi}, V_0(\mathcal{T}), V_1(\mathcal{T}))$ is $\Phi$-aligned. The remaining claims follow from \cite[Theorem~1.8]{GuirardelHorbez2021}.
\end{proof}
 
\begin{prop}\label{prop:acylindrical_actions}
    Let $\Phi \in \Out(F)$ be an infinite order polynomially growing outer automorphism. Then $G_{\Phi}$ admits a $4$-acylindrical action on a tree.
\end{prop}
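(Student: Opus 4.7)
The plan is to reduce to the case of an infinite-order UPG power of $\Phi$, invoke Theorem~\ref{thm:GH} to construct a $\Phi^k$-aligned tree, and conclude via Proposition~\ref{prop:aligned_acylindrical_trees}.

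First, Lemma~\ref{lem:upg-finite-index} furnishes $k \geq 1$ with $\Phi^k$ UPG, and $\Phi^k$ still has infinite order since $\Phi$ does. As $\Phi^k$ acts unipotently on $\ZZ^n$, its image in $\GL(n, \ZZ / 3\ZZ)$ is trivial, so $\Phi^k \in \IA$. I also need that $\mathbb{F}_n$ admits a non-trivial $\Phi^k$-invariant free splitting: fixing any unipotent topological representative $(f,\Gamma)$ of $\Phi^k$, the filtration $\emptyset \subset \Gamma^{(1)} \subset \cdots \subset \Gamma^{(l)} = \Gamma$ provides $f$-invariant subgraphs, and the smallest index $i$ at which $\pi_1(\Gamma^{(i)})$ is a proper non-trivial subgroup of $\mathbb{F}_n$ produces a $\Phi^k$-invariant proper free factor decomposition, from which the desired splitting follows.

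With these inputs, Theorem~\ref{thm:GH} produces a non-trivial $\Phi^k$-aligned $\mathbb{F}_n$-tree $\mathcal{T}_{\Phi^k}$. Since UPG implies neat by the Bestvina--Feighn--Handel result cited above, Lemma~\ref{lemma:non-periodic_trees} makes $\mathcal{T}_{\Phi^k}$ aperiodic $\Phi^k$-aligned, and Proposition~\ref{prop:aligned_acylindrical_trees} then yields a $4$-acylindrical action of $G_{\Phi^k}$ on $\mathcal{T}_{\Phi^k}$.

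To promote this to a $G_\Phi$-action, note that $\Phi$ centralises $\Phi^k$ in $\Out(\mathbb{F}_n)$, so the $\Out(\mathbb{F}_n)$-equivariance in Theorem~\ref{thm:GH} supplies, for any lift $\varphi \in G_\Phi$ of $\Phi$, an isometry $I_\varphi$ of $\mathcal{T}_{\Phi^k}$ satisfying $I_\varphi(g x) = \varphi(g) I_\varphi(x)$ for all $g \in \mathbb{F}_n$ and $x \in \mathcal{T}_{\Phi^k}$. Combined with the $\mathbb{F}_n$-action, this extends the $G_{\Phi^k}$-action to an action of $G_\Phi = \langle \Inn(\mathbb{F}_n), \varphi \rangle$ on $\mathcal{T}_{\Phi^k}$. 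Acylindricity transfers automatically, since $G_{\Phi^k}$ has finite index $k$ in the torsion-free group $G_\Phi$: if $g \in G_\Phi$ pointwise-fixes a segment of length at least $5$, then $g^k \in G_{\Phi^k}$ fixes the same segment, which forces $g^k = 1$ by the $4$-acylindricity of the $G_{\Phi^k}$-action and hence $g = 1$.

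The main obstacle is carefully unpacking the $\Out(\mathbb{F}_n)$-equivariance of Theorem~\ref{thm:GH} so as to obtain a bona fide $G_\Phi$-action on $\mathcal{T}_{\Phi^k}$ extending the $G_{\Phi^k}$-action, rather than just a compatible family of isometries indexed by lifts of $\Phi$; the extraction of the non-trivial $\Phi^k$-invariant free splitting from a unipotent representative is a standard, if slightly fiddly, auxiliary point.
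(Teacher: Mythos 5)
Your overall strategy coincides with the paper's: pass to a suitable power $\Phi^k$, apply Theorem~\ref{thm:GH} and Lemma~\ref{lemma:non-periodic_trees} to obtain a non-trivial aperiodic $\Phi^k$-aligned tree, deduce $4$-acylindricity of the $G_{\Phi^k}$-action from Proposition~\ref{prop:aligned_acylindrical_trees}, and extend to $G_\Phi$ via equivariance. The finite-index/torsion-free argument you give for transferring $4$-acylindricity from $G_{\Phi^k}$ to $G_\Phi$, and the sketch of why a unipotent representative yields a non-trivial $\Phi^k$-invariant free splitting, both make explicit points the paper leaves implicit, and they are correct.

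There is, however, a genuine gap in your reduction step. You assert that since $\Phi^k$ acts unipotently on $\ZZ^n$, its image in $\GL(n, \ZZ/3\ZZ)$ is trivial and hence $\Phi^k \in \IA$. This implication is false: a unipotent integer matrix reduces modulo $3$ to a unipotent matrix, which is in general not the identity; for instance $\left(\begin{smallmatrix}1&1\\0&1\end{smallmatrix}\right)$ is unipotent over $\ZZ$ but has nontrivial image in $\GL(2,\ZZ/3\ZZ)$. Since Theorem~\ref{thm:GH} requires $\Phi^k \in \IA$, this leaves your application of it unjustified. The paper runs the reduction in the opposite order: it first chooses $k$ so that $\Phi^k \in \IA$, which is possible simply because $\IA$ has finite index, and then deduces that $\Phi^k$ is UPG from \cite[Proposition~3.5]{BestvinaFeighnHandel2005}, which says that a polynomially growing element of $\IA$ is unipotent. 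If you prefer to start from Lemma~\ref{lem:upg-finite-index}, you can patch the argument by observing that a unipotent element of $\GL(n,\ZZ/3\ZZ)$ has order a power of $3$, so replacing $\Phi^k$ by $\Phi^{3^j k}$ for $j$ large enough lands in $\IA$ while remaining UPG and of infinite order.
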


\begin{proof}
    Let $k$ be a sufficiently high positive integer such that $\Phi^k$ is contained in $\IA$. Then, $\Phi^k$ is UPG by \cite[Corollary~5.7.6]{BestvinaFeighnHandel2000} and thus $F$ admits a non-trivial $\Phi^k$-invariant free splitting. Hence by Theorem~\ref{thm:GH}, there exists a non-trivial $\Phi^k$-aligned tree $\mathcal{T}$. By Lemma~\ref{lemma:non-periodic_trees}, since the outer automorphism $\Phi^k$ is UPG and thus neat, the tree $\mathcal{T}$ is aperiodic. By Proposition~\ref{prop:aligned_acylindrical_trees}, the action of $G_{\Phi^k}$ on $\mathcal{T}$ is 4-acylindrical. By equivariance in Theorem~\ref{thm:GH}, the tree $\mathcal{T}$ is $\Phi$-invariant. Hence, the 4-acylindrical action of $G_{\Phi^k}$ extends to a 4-acylindrical action of $G_{\Phi}$ on~$\mathcal{T}$.
\end{proof}

\section{Linear growth}
\label{s:linear}

The goal in this section is to prove the following result, which will be taken as the base case of induction in the proof of Theorem~\ref{thm:main}.

\begin{prop} \label{prop:en-linear}
Let $F$ be a finitely generated free group, and let $\Phi \in \Out(F)$ be a linearly growing UPG outer automorphism. Then $G = F \rtimes_\Phi \ZZ$ is equationally Noetherian.
\end{prop}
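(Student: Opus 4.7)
The plan is to realise $G = N \rtimes_\Phi \ZZ$ as the fundamental group of a graph of groups with well-controlled vertex and edge stabilisers, show that the associated Bass--Serre tree action is acylindrical, and then invoke the criterion Proposition~\ref{prop:cyclic-ext} advertised in the introduction. The first ingredient is the Andrew--Martino splitting (formulated in the paper as Lemma~\ref{lem:linear-growth-splitting}): because $\Phi$ is UPG of linear growth, $G$ acts on a tree $\mathcal{T}$ whose vertex stabilisers are of the form $F_v \times \ZZ$ with $F_v$ a finitely generated free group, and whose edge stabilisers are finitely generated free abelian.

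The second step is to check that $G \curvearrowright \mathcal{T}$ is $4$-acylindrical. I would verify that $\mathcal{T}$ fits Definition~\ref{defn:aligned_splitting} of a $\Phi$-aligned tree, so that Proposition~\ref{prop:aligned_acylindrical_trees} applies. Aperiodicity is automatic from Lemma~\ref{lemma:non-periodic_trees} combined with the Bestvina--Feighn--Handel fact that UPG outer automorphisms are neat. The remaining conditions---trivial intersection of distinct ``type $0$'' vertex stabilisers inside $\Inn(N)$, and existence of a canonical lift of $\Phi$ fixing the ball around each ``type $1$'' vertex---should follow directly from the construction of the Andrew--Martino tree, parallel to the verification done in Section~\ref{s:acylindrical} for the Guirardel--Horbez tree. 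Each vertex stabiliser $F_v \times \ZZ$ is equationally Noetherian, since free groups are (being linear), $\ZZ$ is, and the class of equationally Noetherian groups is closed under direct products.

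With these hypotheses in place, the proof reduces to applying Proposition~\ref{prop:cyclic-ext}. Conceptually, by Theorem~\ref{thm:en-non-divergent} it is enough to check $\ker^\omega(\varphi_j) \subseteq \ker(\varphi_j)$ for non-divergent sequences $\varphi_j \colon \mathbb{F}_m \to G$ with respect to $\mathcal{T}$. After extracting a subsequence and conjugating equivariantly, the images $\varphi_j(x_i)$ translate a fixed basepoint by uniformly bounded amounts; $4$-acylindricity then forces the $\omega$-limit behaviour of each $\varphi_j(x_i)$ to be governed by a single vertex group together with a contribution from an adjacent free abelian edge group. Equational Noetherianity of the vertex groups $F_v \times \ZZ$ handles the vertex-group part, and a normal-form analysis at the $\ZZ^k$ edge groups should finish off the argument.

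I expect the main obstacle to be precisely this final edge-group analysis, which is the new content encapsulated by Proposition~\ref{prop:cyclic-ext}. In the superlinear setting addressed by \cite[Theorem~1.9]{Valiunas2021}, the edge stabilisers are cyclic and a standard non-divergent-sequences argument of \cite{GrovesHull2019} suffices. Here, however, the edge stabilisers have higher rank, so one must rule out ``drift'' of $\omega$-almost surely trivial elements along edges whose contribution lives in a $\ZZ^k$ rather than a $\ZZ$: elements can acquire nontrivial edge coordinates that cancel only in the ultraproduct. Designing the criterion so that these edge coordinates are themselves forced to vanish---using quasi-algebraicity of the relevant free abelian subgroups together with acylindricity---is where the genuinely new work lies, and is the only point at which the linear case departs substantively from the superlinear polynomial-growth case treated in Section~\ref{s:superlinear}.
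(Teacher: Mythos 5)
Your overall route matches the paper's: invoke the Andrew--Martino splitting (Lemma~\ref{lem:linear-growth-splitting}), verify $4$-acylindricity through the $\Phi$-aligned tree machinery of Section~\ref{s:acylindrical}, and then feed the resulting graph of groups into Proposition~\ref{prop:cyclic-ext}. You are also right that the substantive novelty relative to the superlinear case lies in the higher-rank edge groups, which is why \cite[Theorem~1.9]{Valiunas2021} cannot be used directly and a bespoke criterion is needed.

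However, there is a genuine gap in how you propose to discharge the hypotheses of Proposition~\ref{prop:cyclic-ext}. Hypothesis~\ref{it:cyclic-qalg} requires that each free factor $N_v$ of a vertex group $F_v \times \ZZ$ be \emph{quasi-algebraic} as a subgroup of the ambient fibre $N$. You instead observe that each vertex group $F_v \times \ZZ$ is equationally Noetherian; this is true but is not a hypothesis of the proposition and does not imply what is needed. The subgroup $F_v$ is a finitely generated, typically infinite-index, subgroup of the free group $N$, and quasi-algebraicity of such subgroups is not automatic. The paper resolves this via Marshall Hall's theorem: $F_v$ is a free factor, hence a retract, of some finite-index subgroup $H_v \leq N$, which gives quasi-algebraicity of $F_v$ in $H_v$ by \cite[Proposition~7.5(v)]{Valiunas2021}; then a separate result (Proposition~\ref{prop:quasi-alg-extensions}, that quasi-algebraicity passes from finite-index subgroups to the whole group) promotes this to quasi-algebraicity of $F_v$ in $N$. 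That last step is itself a new proposition, proved by reduction to wreath products following Baumslag--Myasnikov--Roman'kov. Without addressing this, your application of Proposition~\ref{prop:cyclic-ext} is incomplete. You also do not verify hypothesis~\ref{it:cyclic-edges-in-G} (that the stable letters can be chosen inside $N$), which the paper handles via Remark~\ref{rmk:edge-elements}, though that is a much smaller omission.
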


We will prove Proposition~\ref{prop:en-linear} by showing that the groups $G = F \rtimes_\Phi \ZZ$ as above satisfy the assumptions of the following proposition.

\begin{prop} \label{prop:cyclic-ext}
Let $G$ be a group and $F \unlhd G$ a normal subgroup such that both $F$ and $G/F$ are equationally Noetherian. Suppose that $G$ splits as the fundamental group of a graph of groups $\mathcal{G}$ in which the group associated to any vertex $v \in V(\mathcal{G})$ is $F_v \times Z_v$, where
\begin{enumerate}
\item \label{it:cyclic-qalg} $F_v$ is a quasi-algebraic subgroup of $F$;
\item \label{it:cyclic-Zv} $Z_v \cap F = \{1\}$ and $Z_v F = G$ (in particular, $Z_v \cong G/F$);
\item \label{it:cyclic-He} if $G_e$ is the group associated to an edge $e \in E(\mathcal{G})$, then $Z_{i(e)} \subseteq \iota_e(G_e)$;
\item \label{it:cyclic-edges-in-G} $e \in F$ (as an element of $G = \pi_1(\mathcal{G})$) for all $e \in E(\mathcal{G})$;
\item \label{it:cyclic-acyl} the action of $G$ on the Bass--Serre tree corresponding to $\mathcal{G}$ is acylindrical.
\end{enumerate}
Then $G$ is equationally Noetherian.
\end{prop}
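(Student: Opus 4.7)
The plan is to apply Theorem~\ref{thm:en-non-divergent} to the given acylindrical action on the Bass--Serre tree, and to combine this with equational Noetherianity of $G/N$ and $N$, together with quasi-algebraicity of each $N_v$ in $N$, in order to control the ``$Z$-part'' and the ``$N$-part'' of a sequence of homomorphisms $\mathbb{F}_m \to G$ separately. In the degenerate case where the Bass--Serre tree is a single vertex $v$, we have $G = N_v \times Z_v$, with $N_v$ equationally Noetherian (as a subgroup of the equationally Noetherian group $N$) and $Z_v \cong G/N$ equationally Noetherian by assumption, so $G$ is a direct product of equationally Noetherian groups and hence equationally Noetherian. Otherwise the $G$-action on the Bass--Serre tree $\mathcal{T}$ is minimal on a non-trivial tree, has unbounded orbits, and is acylindrical by~\ref{it:cyclic-acyl}, so Theorem~\ref{thm:en-non-divergent} reduces the claim to showing, for an arbitrary non-principal ultrafilter $\omega$ and any non-divergent sequence $(\varphi_j \colon \mathbb{F}_m \to G)_{j \in \NN}$, the inclusion $\ker^\omega(\varphi_j) \subseteq \ker(\varphi_j)$ $\omega$-almost surely.

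Equational Noetherianity of $G/N$ is then used to force every element of $\ker^\omega(\varphi_j)$ to land in $N$ under $\varphi_j$: composing with the projection $\pi \colon G \to G/N$, Theorem~\ref{thm:en-characterisation} gives $\ker^\omega(\pi\varphi_j) \subseteq \ker(\pi\varphi_j)$ $\omega$-almost surely, and since $\ker^\omega(\varphi_j) \subseteq \ker^\omega(\pi\varphi_j)$, we obtain $\varphi_j(\ker^\omega(\varphi_j)) \subseteq N$ on an $\omega$-large set of indices. Here conditions~\ref{it:cyclic-He} and~\ref{it:cyclic-edges-in-G} are what make $\pi$ interact well with the graph-of-groups structure: condition~\ref{it:cyclic-edges-in-G} forces $\pi$ to vanish on stable letters, and condition~\ref{it:cyclic-He} lets each $Z_v$ be absorbed into adjacent edge groups, so that $\pi$ of a graph-of-groups normal form is simply the product of the $\pi$-images of its vertex-group factors.

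The main step is the control of the ``$N$-parts''. Writing $\varphi_j(x_i) = n_i^{(j)} z_i^{(j)}$ with $n_i^{(j)} \in N$ and $z_i^{(j)} \in Z_v$ for some fixed vertex $v$ (after conjugating each $\varphi_j$ so that the approximate fixed vertex provided by non-divergence is $v$, which does not change $\ker(\varphi_j)$), I would argue along the lines of \cite[Theorem~1.9]{Valiunas2021}: non-divergence together with acylindricity force the graph-of-groups normal form of each $\varphi_j(s)$ (for $s \in \ker^\omega(\varphi_j)$, which lies in $N$ by the previous paragraph) to have bounded combinatorial length, with vertex-group entries in $N_{v_k} \times Z_{v_k}$ along a bounded loop in $\mathcal{G}$. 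Quasi-algebraicity of $N_{v_k}$ in $N$ (condition~\ref{it:cyclic-qalg}), applied via Theorem~\ref{thm:en-characterisation} to the extracted $N$-factors, then pins these down on an $\omega$-large set, and combined with the $Z$-control from the previous paragraph this delivers $\varphi_j(s) = 1$ $\omega$-almost surely.

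The hard part is that the ``take $N$-part'' map $G \to N$ is not a homomorphism, because $Z$ acts non-trivially on $N$ by conjugation; consequently one cannot simply apply equational Noetherianity of $N$ to the sequence of $N$-parts, and instead the argument must stay on the level of the full sequence $\varphi_j$ and track the $N$--$Z$ interaction along graph-of-groups normal forms via Theorem~\ref{thm:gog-normal-forms}. The direct-product splitting $G_v = N_v \times Z_v$ together with~\ref{it:cyclic-He} and~\ref{it:cyclic-edges-in-G} is precisely what allows $N$- and $Z$-contributions to be disentangled factor-by-factor, so that the quasi-algebraicity input for $N_v$ and the equational Noetherianity input for $G/N$ can be run in parallel and recombined, via the non-divergent-sequence machinery of Theorem~\ref{thm:en-non-divergent}, into the desired containment.
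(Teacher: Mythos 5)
Your high-level skeleton matches the paper's: reduce to non-divergent sequences via Theorem~\ref{thm:en-non-divergent}, use non-divergence plus acylindricity to get bounded-length normal forms with a fixed underlying path in $\mathcal{G}$, and then try to combine equational Noetherianity of $N$ and $G/N$ with quasi-algebraicity of $N_v$ to close the loop. You also correctly isolate the central obstruction---the ``take the $N$-part'' map $G \to N$ is not a homomorphism because $Z_v$ acts on $N$ by conjugation---and that conditions~\ref{it:cyclic-He} and~\ref{it:cyclic-edges-in-G} are what should let the $N$- and $Z$-contributions be disentangled. However, the proposal stops precisely where the real work begins: it never explains \emph{how} to disentangle them, and the phrase ``argue along the lines of \cite[Theorem~1.9]{Valiunas2021}'' cannot be made precise here, since that theorem requires hypotheses (retractions onto edge groups, finitely many composite homomorphisms along closed paths) that fail in this setting; indeed Proposition~\ref{prop:cyclic-ext} exists in the paper precisely to cover a case that result does not.

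The missing idea is concrete. After factoring through a homomorphism from a free group $\widehat{F}$ whose generators are tagged as ``$N$-type'' ($F_v$), ``$Z$-type'' ($F_v'$), or ``edge'' ($X_e$), one must further enlarge this to a free group $\overline{F}$ with \emph{additional} generators recording (a) the element of $Z_{i(\overline{e})}$ obtained by pushing a given $Z$-type generator across an edge $e$ (well-defined by~\ref{it:cyclic-Zv} and~\ref{it:cyclic-He}), and (b) the resulting $N$-valued error term. One then exhibits an explicit set $\mathcal{K} \subseteq \overline{F}$ of relators---consisting of: the obvious edge relators; elements of the relevant $\ker^\omega$ restricted to $N$-type and $Z$-type subgroups (where equational Noetherianity of $N$ and $G/N$ is actually used); the relators from (a)--(b); and commutators $[g,h]$ with $g$ $Z$-type and $h$ landing $\omega$-a.s.\ in some $N_v$ (where quasi-algebraicity of $N_v$ is actually used)---and proves two things: $\mathcal{K} \subseteq \ker(\overline\varphi_j)$ $\omega$-a.s., and $\ker^\omega \subseteq \langle\!\langle \mathcal{K} \rangle\!\rangle$. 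The second containment is an induction on the length $n$ of the graph-of-groups normal form, using Theorem~\ref{thm:gog-normal-forms} to find a pinch $e_{k-1} = \overline{e_k}$ with $\overline\varphi_j(g_kz_k) \in \iota_{e_k}(G_{e_k})$ $\omega$-a.s., then using the relators in (a)--(b) and the commutators to push $z_k$ past $X_{e_k}$, absorb its error into the $N$-part, and reduce $n$ by two. None of this is present in your proposal, and it cannot be waved away: without the extra generators and the commutator relators there is no homomorphism-level statement to which equational Noetherianity of $N$ and quasi-algebraicity of $N_v$ can be applied, which is exactly the issue you flagged but did not resolve.
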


Our main tool for proving Proposition~\ref{prop:cyclic-ext} is the following technical lemma, which in conjunction with Theorem~\ref{thm:en-non-divergent} will imply the result of the proposition.

\begin{lem} \label{lem:sorry}
Let $G$, $F$, $\mathcal{G}$, $F_v$, $Z_v$ and $G_e$ be as in Proposition~\ref{prop:cyclic-ext}. Let $\FF{v}$ and $\FFF{v}$ (for $v \in V(\mathcal{G})$) be finitely generated free groups, and let $\mathbb{F} = \left( *_{v \in V(\mathcal{G})} (\FF{v} * \FFF{v}) \right) * \FF{E}$, where $\FF{E}$ is a free group with free basis $\{ X_e \mid e \in E(\mathcal{G}) \}$. Suppose that $(\varphi_j\colon \mathbb{F} \to G)_{j \in \mathbb{N}}$ is a sequence of homomorphisms such that $\varphi_j(\FF{v}) \subseteq F_v$, $\varphi_j(\FFF{v}) \subseteq Z_v$ and $\varphi_j(X_e) = e$, and let $\omega$ be a non-principal ultrafilter. Then $\ker^\omega(\varphi_j) \subseteq \ker(\varphi_j)$ holds $\omega$-almost surely.
\end{lem}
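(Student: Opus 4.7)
My plan is to enlarge $F$ to a free group $\overline F$ containing $F$ as a free factor, together with a natural extension $\overline\varphi_j\colon\overline F\to G$ of each $\varphi_j$, and to exhibit a set $\mathcal K\subseteq\overline F$ such that (a) every element of $\mathcal K$ lies in $\ker(\overline\varphi_j)$ $\omega$-almost surely, and (b) every $g\in\ker^\omega(\varphi_j)$ lies in the normal closure $\langle\!\langle\mathcal K\rangle\!\rangle$. Granting (a) and (b), any such $g$ is a product of finitely many $\overline F$-conjugates of $\mathcal K^{\pm1}$-elements, and the corresponding finitely many $\omega$-large sets intersect to an $\omega$-large set witnessing $\varphi_j(g)=\overline\varphi_j(g)=1$. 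Concretely, I would take $\overline F$ to contain, for every $v\in V(\mathcal G)$, countably many free copies of $F_v$ and of $F_v'$, extending $\overline\varphi_j$ so that each extra $F_v$-copy can be sent to any desired element of $N_v$, each extra $F_v'$-copy to any desired element of $Z_v$, and keeping $\overline\varphi_j(X_e)=e$; this flexibility is needed because the reduction step below requires $\overline F$-preimages of specific vertex-group elements.

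I would build $\mathcal K$ from four ingredients matching the structural hypotheses of Proposition~\ref{prop:cyclic-ext}. Applying Theorem~\ref{thm:en-characterisation} to the homomorphisms (from copies of $F_v$) into $N$ induced by $\overline\varphi_j$ and using the equational Noetherianity of $N$, I put into $\mathcal K$ words in the $F_v$-factors that $\omega$-almost surely die under $\overline\varphi_j$; analogously, using the equational Noetherianity of $G/N$ applied to the compositions (copies of $F_v'$)$\to Z_v\cong G/N$, I include words in the $F_v'$-factors. Quasi-algebraicity of $N_v$ in $N$ (condition~\ref{it:cyclic-qalg}) gives ``membership certificates'' witnessing that specific $F_v$-words have $\overline\varphi_j$-image in $N_v$. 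Finally, condition~\ref{it:cyclic-He} forces $\iota_e(G_e)=A_e\times Z_{i(e)}$ with $A_e=N_{i(e)}\cap\iota_e(G_e)$, so the relation $\iota_e(h)=e\,\iota_{\overline e}(h)\,e^{-1}$ of $\pi_1(\mathcal G)$ yields ``edge-collapse'' elements of the form $X_e\,u\,X_{\overline e}\cdot(u')^{-1}\in\mathcal K$, where $\overline\varphi_j(u)\in\iota_{\overline e}(G_e)$ (certified by the preceding ingredients) and $u'$ is a word in an extra copy of $F_{i(e)}*F_{i(e)}'$ with $\overline\varphi_j(u')=\iota_e(h)$. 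I would also include in $\mathcal K$ the commutation relations $[F_v,F_v']$ (reflecting $G_v=N_v\times Z_v$) and the pairs $X_eX_{\overline e}$ (reflecting $e\overline e=1$).

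To verify (b), I would take $g\in\overline F$ with $\overline\varphi_j(g)=1$ $\omega$-almost surely, write its reduced word as $g=u_0X_{e_1}^{\pm1}u_1\cdots X_{e_n}^{\pm1}u_n$ with each $u_i$ in the vertex-group factors, and induct on $n$. When $n\ge 1$, Theorem~\ref{thm:gog-normal-forms} applied to the identity $\overline\varphi_j(g)=1$ forces, for some position $j'\in\{1,\ldots,n-1\}$, that $e_{j'+1}=\overline{e_{j'}}$ and $\overline\varphi_j(u_{j'})\in\iota_{\overline{e_{j'}}}(G_{e_{j'}})$; pigeonholing over the finitely many possible values of $j'$ yields a single index that works on an $\omega$-large set, and the edge-collapse elements of $\mathcal K$ then let me replace the subword $X_{e_{j'}}u_{j'}X_{\overline{e_{j'}}}$ modulo $\langle\!\langle\mathcal K\rangle\!\rangle$ by a word in a single vertex-group factor, strictly decreasing $n$. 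In the base case $n=0$, $\overline\varphi_j(g)$ lies in a finite union of vertex groups, and the equational Noetherianity relations for $N$ and $G/N$ together with the commutation and quasi-algebraic ingredients of $\mathcal K$ reduce $g$ to the identity modulo $\langle\!\langle\mathcal K\rangle\!\rangle$. I expect the main difficulty to lie in this $n=0$ case: the reduced word of $g$ may freely interleave syllables from $F_v$- and $F_v'$-factors for different vertices $v$, and one must first use the commutation, quasi-algebraic, and push-through elements of $\mathcal K$ to sort $g$ into a product of an element mapping into $N$ with an element mapping into a single $Z_v$ before the equational Noetherianity of $N$ and of $G/N$ can be invoked.
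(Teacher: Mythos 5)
Your overall skeleton matches the paper's: enlarge $F$ to a free group $\overline{F}$, extend $\varphi_j$ to $\overline\varphi_j$, build a set $\mathcal K$ that lies $\omega$-almost surely in $\ker(\overline\varphi_j)$ and whose normal closure contains $\ker^\omega(\varphi_j)$, and induct on the number of $X_e$-letters using the Bass--Serre normal form theorem. However, there is a genuine gap in the way you set up the induction, and a correspondingly mislocated difficulty.

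You propose to write $g$ in its reduced form $g = u_0 X_{e_1}^{\pm 1} u_1 \cdots X_{e_n}^{\pm1} u_n$ with each $u_i$ ``in the vertex-group factors'' --- that is, $u_i \in *_v (F_v * F_v')$ --- and then apply Theorem~\ref{thm:gog-normal-forms} to $\overline\varphi_j(g)=1$. But Theorem~\ref{thm:gog-normal-forms} requires an expression $g_0 e_1 g_1 \cdots e_n g_n$ with each $g_i$ lying in a \emph{single} vertex group $G_{v_i}$ compatible with the path $e_1\cdots e_n$. If $u_i$ interleaves syllables from $F_{v}*F_v'$ for several different $v$, then $\overline\varphi_j(u_i)$ is a general element of $G$, not an element of a vertex group, and the theorem does not apply. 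The paper's proof sidesteps this by first multiplying $g$ by elements of $\langle\!\langle\mathcal K_0\rangle\!\rangle$ --- crucially $\mathcal K_0$ contains not only the $X_eX_{\overline e}$ you list but also the single letters $X_e$ for $e$ with $e=1$ in $G$ (e.g.\ spanning-tree edges) --- to rewrite $g$ into the form $(*)$: $g_1z_1X_{e_1}\cdots g_nz_nX_{e_n}g_{n+1}z_{n+1}$ with $e_1\cdots e_n$ a \emph{closed path} in $\mathcal G$ and $g_k\in F_{i(e_k)}$, $z_k\in F_{i(e_k)}'$. Without the single-$X_e$ elements you cannot in general move between vertices, so your $\mathcal K$ is missing an ingredient.

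Once $g$ is in the form $(*)$, the $n=0$ base case becomes trivial: $g=g_1z_1$ with $\overline\varphi_j(g_1)\in N_v$ and $\overline\varphi_j(z_1)\in Z_v$ for a \emph{single} $v$, and since $N_v\cap Z_v=\{1\}$ both vanish $\omega$-a.s.\ and hence lie in $\mathcal K_1$. So the difficulty you anticipate --- interleaving of factors from different vertices in the $n=0$ case --- never arises: it is discharged once and for all by the $\mathcal K_0$-rewriting, which \emph{increases} $n$ to separate pieces from different vertices. Indeed the commutation relations you propose ($[F_v,F_v']$) only sort within one vertex; elements of $N_{v_1}$ and $Z_{v_2}$ with $v_1\neq v_2$ do not commute in $G$, so across-vertex sorting is not available and the paper never needs it. Finally, your plan to extend $\overline\varphi_j$ so that extra copies ``can be sent to any desired element'' is too vague to support the argument: the paper's extension is canonical (it sends $s_v$ to the unique element of $Z_v\cap\varphi_j(s_w)N$, which exists by condition~\ref{it:cyclic-Zv}), and this canonicity, together with the auxiliary edge-factors $\widetilde F_e$ recording the correction terms $\overline\varphi_j(s_{i(e)})^{-1}e\,\overline\varphi_j(s_{i(\overline e)})e^{-1}\in N$, is what makes the edge-push relations $\mathcal K_2$ well-defined and the commutators $\mathcal K_3$ applicable in the inductive step.
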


We prove Lemma~\ref{lem:sorry} by extending the $\varphi_j$ to homomorphisms $\overline\varphi_j \colon \overline{\mathbb{F}} \to G$, where $\overline{\mathbb{F}}$ is a free group containing $\mathbb{F}$ as a free factor, and explicitly describing a subset $\mathcal{K} \subseteq \overline{\mathbb{F}}$ whose normal closure contains $\ker^\omega(\varphi_j)$ and is $\omega$-almost surely contained in $\ker(\overline\varphi_j)$. We then take an element $g \in \overline{\mathbb{F}}$ such that $\overline\varphi_j(g) = 1$ $\omega$-almost surely, and consider a ``canonical'' expression $g_{0,j}e_1g_{1,j} \cdots e_ng_{n,j}$ of $\overline\varphi_j(g)$ as in Theorem~\ref{thm:gog-normal-forms}, where the elements $g_{i,j}$ and $e_i$ are $\overline\varphi_j$-images of subwords of $g$ (i.e.\ the reduced word representing $g$), so that $e_1,\ldots,e_n \in E(\mathcal{G})$ are independent of $j$. The key point is that the assumptions \ref{it:cyclic-qalg}--\ref{it:cyclic-edges-in-G} in Proposition~\ref{prop:cyclic-ext}, together with the fact that $F$ and $G/F$ are equationally Noetherian, allow choosing $\mathcal{K}$ in such a way that inserting an element from $\mathcal{K}$ in the reduced word representing $g$ allows us to $\omega$-almost surely reduce the length $n$ of this canonical expression of $\overline\varphi_j(g)$.  The assumption~\ref{it:cyclic-acyl} in Proposition~\ref{prop:cyclic-ext} is not used in the proof of Lemma~\ref{lem:sorry} and only appears later, in the proof of Proposition~\ref{prop:en-linear}.

\begin{proof}[Proof of Lemma~\ref{lem:sorry}]
Let $S_v$ be a free basis for $\FFF{v}$ for all $v \in V(\mathcal{G})$, let $\widetilde{S} = \bigsqcup_{v \in V(\mathcal{G})} S_v$, and let $\widetilde{\mathbb{F}}$, $\FFFF{v}$ and $\FFFF{e}$ be free groups with free bases $\widetilde{S}$, $\{ s_v \mid s \in \widetilde{S} \}$ and $\{ s_e \mid s \in \widetilde{S} \}$, respectively, where $v \in V(\mathcal{G})$ and $e \in E(\mathcal{G})$. Note that $\FFF{v}$ can be canonically identified with the free factor of $\FFFF{v}$ generated by $\{ s_v \mid s \in S_v \}$. Let $\widehat{\mathbb{F}} = \left( *_{v \in V(\mathcal{G})} \FF{v} \right) * \left( *_{e \in E(\mathcal{G})} \FFFF{e} \right) * \FF{E}$, and let $\overline{\mathbb{F}} = \widehat{\mathbb{F}} * \left( *_{v \in V(\mathcal{G})} \FFFF{v} \right)$.

We extend each $\varphi_j\colon \mathbb{F} \to G$ to a homomorphism $\overline\varphi_j \colon \overline{\mathbb{F}} \to G$ as follows. We define $\overline\varphi_j \colon \FFFF{v} \to G$ (for $v \in V(\mathcal{G})$) by setting, for each $s \in S_w \subseteq \widetilde{S}$, the element $\overline\varphi_j(s_v)$ to be an element in $Z_v \cap \varphi_j(s)F$, which exists and is unique by \ref{it:cyclic-Zv}. We then define $\overline\varphi_j \colon \FFFF{e} \to G$ (for $e \in E(\mathcal{G})$) by setting, for each $s \in \widetilde{S}$, $\overline\varphi_j(s_e) \coloneqq \overline\varphi_j(s_{i(e)})^{-1} e \overline\varphi_j(s_{i(\overline{e})}) e^{-1}$; note that since we have $e \in F$ by \ref{it:cyclic-edges-in-G} and, by construction, $\overline\varphi_j(s_{i(e)}) F = \overline\varphi_j(s_{i(\overline{e})}) F$, it follows that $\overline\varphi_j(s_e) \in F$. Finally, we define the restriction of $\overline\varphi_j$ to $\left( *_{v \in V(\mathcal{G})} \FF{v} \right) * \FF{E}$ to be equal to the corresponding restriction of $\varphi_j$.

Since $\overline\varphi_j$ extends $\varphi_j$, we have $\ker(\varphi_j) = \mathbb{F} \cap \ker(\overline\varphi_j)$, so it is enough to show that $\ker^\omega(\varphi_j) \subseteq \ker(\overline\varphi_j)$ $\omega$-almost surely. We define
\begin{align*}
\mathcal{K}_0 &= \{ X_e \mid e \in E(\mathcal{G}), e=1 \text{ in } G \} \cup \{ X_eX_{\overline{e}} \mid e \in E(\mathcal{G}) \}, \\
\mathcal{K}_1 &= \ker^\omega(\overline\varphi_j|_{\widehat{\mathbb{F}}}) \cup \bigcup_{v \in V(\mathcal{G})} \ker^\omega(\overline\varphi_j|_{\FFFF{v}}), \\
\mathcal{K}_2 &= \{ s_{i(e)} s_e X_e s_{i(\overline{e})}^{-1} X_e^{-1} \mid e \in E(\mathcal{G}) \}, \qquad \text{and} \\
\mathcal{K}_3 &= \bigcup_{v \in V(\mathcal{G})} \left\{ [g,h] \:\middle|\: g \in \FFFF{v}, h \in \widehat{\mathbb{F}}, \overline\varphi_j(h) \in F_v \text{ $\omega$-almost surely} \right\},
\end{align*}
and let $\mathcal{K} = \mathcal{K}_0 \cup \mathcal{K}_1 \cup \mathcal{K}_2 \cup \mathcal{K}_3$. We then have $\mathcal{K} \subseteq \ker(\overline\varphi_j)$ $\omega$-almost surely: indeed, we have $\mathcal{K}_0 \cup \mathcal{K}_2 \subseteq \ker(\overline\varphi_j)$ for all $j$ by construction; we have $\mathcal{K}_1 \subseteq \ker(\overline\varphi_j)$ $\omega$-almost surely since the $\overline\varphi_j$-image of $\widehat{\mathbb{F}}$ (respectively $\FFFF{v}$) is contained in the equationally Noetherian group $F$ (respectively $Z_v \cong G/F$); and we have $\mathcal{K}_3 \subseteq \ker(\overline\varphi_j)$ $\omega$-almost surely since, given $v \in V(\mathcal{G})$, $F_v$ is quasi-algebraic in $F$ by \ref{it:cyclic-qalg}, and consequently
\[
\{ h \in \widehat{\mathbb{F}} \mid [\FFFF{v},h] \subseteq \mathcal{K}_3 \} = (\overline\varphi_j|_{\widehat{\mathbb{F}}})_\omega^{-1}(F_v) \subseteq (\overline\varphi_j|_{\widehat{\mathbb{F}}})^{-1}(F_v) \subseteq \{ h \in \widehat{\mathbb{F}} \mid [\FFFF{v},h] \subseteq \ker(\overline\varphi_j) \}
\]
holds $\omega$-almost surely. In particular, we have $\langle\!\langle \mathcal{K} \rangle\!\rangle \subseteq \ker(\overline\varphi_j)$ $\omega$-almost surely, and so it is enough to show that we have $\ker^\omega(\varphi_j) \subseteq \langle\!\langle \mathcal{K} \rangle\!\rangle$.

Thus, let $g \in \mathbb{F}$ be such that $\varphi_j(g) = 1$ $\omega$-almost surely; we claim that $g \in \langle\!\langle \mathcal{K} \rangle\!\rangle$. After inserting letters of $\mathcal{K}_0$ in a word representing $g$ (that is, multiplying $g$ by an element of $\langle\!\langle \mathcal{K}_0 \rangle\!\rangle$) if necessary, we can express $g$ as
\begin{equation} \tag{$*$} \label{eq:g-expr}
g = g_1 z_1 X_{e_1} \cdots g_n z_n X_{e_n} g_{n+1} z_{n+1}
\end{equation}
where $e_1 \cdots e_n$ is a closed path in $\mathcal{G}$, $g_k \in \FF{i(e_k)}$ and $z_k \in \FFF{i(e_k)}$ for all $k$, with indices taken modulo $n$. In particular, we have an element $g \in \overline{\mathbb{F}}$ and an expression \eqref{eq:g-expr}, where $e_1 \cdots e_n$ is a closed path in $\mathcal{G}$, where $z_k \in \FFFF{i(e_k)}$, and where $g_k \in \widehat{\mathbb{F}}$ are such that $\overline\varphi_j(g_k) \in F_{i(e_k)}$ $\omega$-almost surely. We will prove, by induction on $n$, that such an element $g$ is contained in $\langle\!\langle \mathcal{K} \rangle\!\rangle$, as long as $\overline\varphi_j(g) = 1$ $\omega$-almost surely.

In the base case, $n = 0$, we have $\overline\varphi_j(g) = \overline\varphi_j(g_1) \overline\varphi_j(z_1)$ for all $j$, and we $\omega$-almost surely have $\overline\varphi_j(g_1) \in F_v$ and $\overline\varphi_j(z_1) \in Z_v$, for some $v \in V(\mathcal{G})$. Since $\overline\varphi_j(g) = 1$ $\omega$-almost surely and since $F_v \cap Z_v = \{1\}$, it follows that we have $\overline\varphi_j(g_1) = \overline\varphi_j(z_1) = 1$ $\omega$-almost surely and so $g_1,z_1 \in \mathcal{K}_1$. We therefore have $g \in \langle\!\langle \mathcal{K} \rangle\!\rangle$, as required.

Suppose now that $n > 0$. Then, by Theorem~\ref{thm:gog-normal-forms}, it follows that there $\omega$-almost surely exists $k \in \{2,\ldots,n\}$ such that $e_{k-1} = \overline{e_k}$ and $\overline\varphi_j(g_kz_k) \in \iota_{e_k}(G_{e_k})$; since $\omega$ is finitely additive, this holds $\omega$-almost surely for some fixed value of $k$. Let $e = e_{k-1}$, $v = i(\overline{e})$ and $w = i(e)$. If $z_k = s_{1,v} \cdots s_{\ell,v}$, we can then use the elements $s_{m,w} s_{m,e} X_e s_{m,v}^{-1} X_e^{-1} \in \mathcal{K}_2$ to obtain
\begin{align*}
X_{e_{k-1}} g_k z_k X_{e_k} &= X_e g_k z_k X_{\overline{e}} \equiv_0 X_e g_k z_k X_e^{-1} = X_eg_kX_e^{-1} \cdot \prod_{m=1}^\ell X_es_{m,v}X_e^{-1} \\ &\equiv_2 X_eg_kX_e^{-1} \cdot \prod_{m=1}^\ell s_{m,w}s_{m,e}
\end{align*}
where ``$\equiv_i$'' denotes equivalence modulo $\langle\!\langle \mathcal{K}_i \rangle\!\rangle$. By construction we then have $s_{m,w} \in \FFFF{w}$, whereas since $\overline\varphi_j(s_{m,v}) \in Z_v \subseteq \iota_{\overline{e}}(G_e)$ and $\overline\varphi_j(s_{m,w}) \in Z_w \subseteq \iota_e(G_e)$ by \ref{it:cyclic-He}, it follows that we have $\overline\varphi_j(s_{m,e}) = \overline\varphi_j(s_{m,w})^{-1} e \overline\varphi_j(s_{m,v}) e^{-1} \in F_w$ $\omega$-almost surely. It therefore follows that each $s_{m,w}$ commutes with each $s_{m',e}$ modulo $\langle\!\langle \mathcal{K}_3 \rangle\!\rangle$, and so we may replace $\prod_{m=1}^\ell s_{m,w} s_{m,e}$ with $g'z'$, where $g' = \prod_{m=1}^\ell s_{m,e}$ and $z' = \prod_{m=1}^\ell s_{m,w}$. Finally, since $\overline\varphi_j(g_{k+1}) \in F_w$ $\omega$-almost surely by construction and since $\overline\varphi_j(g_k) \in \iota_{\overline{e}}(G_e)$ (implying that $e \overline\varphi_j(g_k) e^{-1} \in F_w$) $\omega$-almost surely, the commutators of the elements $z_{k-1}$ and $z'$ with the elements $X_e g_k X_e^{-1}$, $g'$ and $g_{k+1}$ all belong to $\mathcal{K}_3$, which allows us to replace the subword $g_{k-1}z_{k-1}X_{e_{k-1}}g_kz_kX_{e_k}g_{k+1}z_{k+1}$ of $g$ with the product $(g_{k-1} \cdot X_eg_kX_e^{-1} \cdot g' \cdot g_{k+1}) \cdot (z_{k-1} \cdot z' \cdot z_{k+1})$. Such a replacement procedure results in an element satisfying the inductive hypothesis of normal form length $n-2$; it follows that such an element, and therefore $g$, is in $\langle\!\langle \mathcal{K} \rangle\!\rangle$, as required.
\end{proof}

\begin{proof}[Proof of Proposition~\ref{prop:cyclic-ext}]
Let $\mathcal{T}$ be the corresponding Bass--Serre tree. Assume, without loss of generality, that $G$ acts on $\mathcal{T}$ without global fixed points (otherwise $G = F_v \times Z_v \cong F_v \times (G/F)$ for some $v \in V(\mathcal{G})$, and so $G$ is equationally Noetherian as a product of two equationally Noetherian groups), and in particular with unbounded orbits.

Let $(\psi_j\colon \mathbb{F} \to G)_{j \in \mathbb{N}}$ be a non-divergent sequence of homomorphisms (with respect to the action on $\mathcal{T}$ and a non-principal ultrafilter $\omega$), for some finitely generated free group $\mathbb{F}$ with free basis $S$; it is then enough to show, by Theorem~\ref{thm:en-non-divergent}, that $\ker^\omega(\psi_j) \subseteq \ker(\psi_j)$ $\omega$-almost surely. Since the sequence $(\psi_j)$ is non-divergent, it follows that $\lim_{j \to \omega} \inf_{x \in \mathcal{T}} \max_{s \in S} d(x, \psi_j(s) \cdot x) < \infty$. Let $x_0 \in \mathcal{T}$ be a base vertex; since the $G$-action on $\mathcal{T}$ is cocompact, we may then assume, by conjugating each $\psi_j$ if necessary, that the infimum is $\omega$-almost surely realised uniformly close to $x_0$, implying that we have $n \coloneqq \lim_{j \to \omega} \max_{s \in S} d(x_0, \psi_j(s) \cdot x_0) < \infty$.

In particular, given $s \in S$ the geodesic path from $x_0$ to $\psi_j(s) \cdot x_0$ in $\mathcal{T}$ $\omega$-almost surely has length $\leq n$, so since the underlying graph of $\mathcal{G}$ is finite (and therefore contains finitely many paths of length $\leq n$) and since $\omega$ is finitely additive, it follows that there is a specific closed path $P_s = e_{s,1} \cdots e_{s,n_s}$ in $\mathcal{G}$ (with $n_s \leq n$) such that $P_s$ is $\omega$-almost surely the image of the geodesic from $x_0$ to $\psi_j(s) \cdot x_0$ under the quotient map. This implies that we $\omega$-almost surely have
\[
\psi_j(s) = h_{s,1,j} e_{s,1} \cdots h_{s,n_s,j} e_{s,n_s} h_{s,n_s+1,j},
\]
where each $h_{s,k,j} \in F_{i(e_{s,k})} \times Z_{i(e_{s,k})}$ (where $e_{s,n_s+1} \coloneqq e_{s,1}$), and in particular $h_{s,k,j} = g_{s,k,j} z_{s,k,j}$ for some $g_{s,k,j} \in F_{i(e_{s,k})}$ and $z_{s,k,j} \in Z_{i(e_{s,k})}$.

Now let $\FF{E}$ be a free group with basis $\{ X_e \mid e \in E(\mathcal{G}) \}$, and for each $v \in V(\mathcal{G})$ let $\FF{v}$ and $\FFF{v}$ be free groups with bases $\{ Y_{s,k} \mid s \in S, 1 \leq k \leq n_s+1, i(e_{s,k}) = v \}$ and $\{ \widehat{Y}_{s,k} \mid s \in S, 1 \leq k \leq n_s+1, i(e_{s,k}) = v \}$, respectively. Let $\widehat{\mathbb{F}} = \left( *_{v \in V(\mathcal{G})} (\FF{v} * \FFF{v}) \right) * \FF{E}$, and define homomorphisms $\chi\colon \mathbb{F} \to \widehat{\mathbb{F}}$ and $(\widehat\psi_j\colon \widehat{\mathbb{F}} \to G)_{j \in \mathbb{N}}$ by setting $\chi(s) = Y_{s,1} \widehat{Y}_{s,1} X_{e_{s,1}} \cdots Y_{s,n_s} \widehat{Y}_{s,n_s} X_{e_{s,n_s}} Y_{s,n_s+1} \widehat{Y}_{s,n_s+1}$ for $s \in S$, $\widehat\psi_j(X_e) = e$ for $e \in E(\mathcal{G})$, as well as $\widehat\psi_j(Y_{s,k}) = g_{s,k,j}$ and $\widehat\psi_j(\widehat{Y}_{s,k}) = z_{s,k,j}$ (which are well-defined $\omega$-almost surely).  It follows that we have $\psi_j = \widehat\psi_j \circ \chi$ whenever $\widehat\psi_j$ is defined, and in particular we have $\ker(\psi_j) = \chi^{-1}(\ker(\widehat\psi_j))$ $\omega$-almost surely, as well as $\ker^\omega(\psi_j) = \chi^{-1}(\ker^\omega(\widehat\psi_j))$. However, it follows from Lemma~\ref{lem:sorry} that we $\omega$-almost surely have $\ker^\omega(\widehat\psi_j) \subseteq \ker(\widehat\psi_j)$, so since $\chi$ is injective it follows that $\ker^\omega(\psi_j) \subseteq \ker(\psi_j)$ $\omega$-almost surely. Thus, by Theorem~\ref{thm:en-non-divergent}, $G$ is equationally Noetherian, as required.
\end{proof}

In order to prove Proposition~\ref{prop:en-linear}, it remains to show that we can apply Proposition~\ref{prop:cyclic-ext} to the setting of free-by-cyclic groups with linearly growing monodromies. In order to satisfy the assumptions \ref{it:cyclic-Zv}--\ref{it:cyclic-acyl} of Proposition~\ref{prop:cyclic-ext}, we will use the trees constructed by Andrew--Martino \cite{AndrewMartino2022}. We note that there exists a related construction of a tree with a non-elementary 4-acylindrical action which is due to Dahmani--Touikan \cite{DahmaniTouikan2023}.

We remark that in general the trees constructed in \cite{AndrewMartino2022} differ from the trees arising in the work of Guirardel--Horbez in Theorem~\ref{thm:GH}.

\begin{lem}[{Andrew--Martino \cite[Proposition~5.2.2]{AndrewMartino2022}}]\label{lem:linear-growth-splitting}
    Let $F$ be a free group of finite rank and let $\varphi \in \Aut(F)$ be an automorphism representing a unipotent and linearly growing element $\Phi$ in $\Out(F)$. Let $G = F \rtimes_{\varphi} \langle t \rangle$ be the corresponding mapping torus. Then there exists a bipartite $G$-tree $(\mathcal{T}, V_0(\mathcal{T}), V_1(\mathcal{T}))$ with the following properties.
    \begin{enumerate}
        \item \label{it:linear-black-vertex-stabs} The stabiliser of each vertex $u \in V_0(\mathcal{T})$ is a subgroup of the form $F_u \times \langle t_u \rangle$, where $F_u \leq F$ is a maximal infinite cyclic subgroup of $F$, and $t_u \in Ft$.
        \item \label{it:linear-vertex-stabs} The stabiliser of each vertex $v \in V_1(\mathcal{T})$ is a maximal subgroup of the form $F_v \times \langle t_v \rangle$ where $F_v \leq F$ is a finitely generated non-abelian subgroup of $F$ and $t_v \in Ft$. Moreover, if $v_0$ and $v_1$ are distinct vertices in $V_1(\mathcal{T})$ then $t_{v_0} \neq t_{v_1}$.
        \item \label{it:linear-edge-stabs} The stabiliser of every edge is a maximal $\ZZ^2$-subgroup of $G$.
        \item \label{it:linear-center} For every edge $e$ in $\mathcal{T}$ and incident vertex $v$, we have that $Z(\Stab_G(v)) \leq \Stab_G(e)$, where $Z(\Stab_G(v))$ denotes the center of $\Stab_G(v)$.
        \item \label{it:linear-acylindrical} The action of $G$ on $\mathcal{T}$ is $4$-acylindrical.
    \end{enumerate}
\end{lem}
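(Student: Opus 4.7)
The existence of the tree $\mathcal{T}$ satisfying~\ref{it:linear-black-vertex-stabs}--\ref{it:linear-center} is the content of \cite[Proposition~5.2.2]{AndrewMartino2022}, so my plan is to establish the $4$-acylindricity in~\ref{it:linear-acylindrical} by fitting $\mathcal{T}$ into the framework of Section~\ref{s:acylindrical} as a non-trivial aperiodic $\Phi$-aligned tree and then invoking Proposition~\ref{prop:aligned_acylindrical_trees}.

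Since $F$ is centerless and $\Phi$ has infinite order, the homomorphism $G \to \Aut(F)$ sending $ft^k \mapsto \ad_f \circ \varphi^k$ is injective and identifies $G$ with $G_\Phi$, so I would work under this identification. To each $v \in V_1(\mathcal{T})$ I would assign the lift $\varphi_v \in \pi^{-1}(\Phi)$ given by conjugation by $t_v \in Ft$; property~\ref{it:linear-vertex-stabs} together with centerlessness of $F$ makes the map $v \mapsto \varphi_v$ injective. Property~\ref{it:linear-center}, combined with the observation that $Z(F_v \times \langle t_v\rangle) = \langle t_v\rangle$ (since $F_v$ is non-abelian free by~\ref{it:linear-vertex-stabs}), implies that $t_v$ lies in every edge stabiliser incident to $v$ and hence fixes the ball of radius one around~$v$; this verifies~\ref{it:aligned_splittings_black}.

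For~\ref{it:aligned_splitting_ffs}, suppose for contradiction that $1 \neq a \in F_{w_1} \cap F_{w_2}$ for distinct $w_1, w_2 \in V_0(\mathcal{T})$. Then $a$ fixes the geodesic between $w_1$ and $w_2$, and picking an interior white vertex $v$ on this geodesic gives two edge stabilisers at $v$ containing~$a$. Each such edge stabiliser is a maximal $\ZZ^2$ by~\ref{it:linear-edge-stabs}; using~\ref{it:linear-center} as above, each takes the form $\langle b\rangle \times \langle t_v\rangle$ for some maximal cyclic $\langle b\rangle \leq F_v$, so the $F$-part of each is a maximal cyclic subgroup of the free group~$F_v$. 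If the two cyclic subgroups are distinct, their intersection is trivial, contradicting $a \neq 1$; if they coincide, this common cyclic subgroup becomes $F_w$ for both black neighbours of $v$ on the geodesic, a situation ruled out by the Andrew--Martino construction. Finally, since $\Phi$ is UPG it is neat by \cite[Proposition~4.5]{BestvinaFeighnHandel2005}, so Lemma~\ref{lemma:non-periodic_trees} delivers aperiodicity, and Proposition~\ref{prop:aligned_acylindrical_trees} yields $4$-acylindricity. The main obstacle is the last step in verifying~\ref{it:aligned_splitting_ffs}, namely establishing that distinct black vertices of $\mathcal{T}$ correspond to distinct maximal cyclic subgroups of~$F$; this requires extracting the relevant structural fact from the construction of~$\mathcal{T}$ in \cite{AndrewMartino2022}, or alternatively an intrinsic argument using properties~\ref{it:linear-black-vertex-stabs}--\ref{it:linear-center}.
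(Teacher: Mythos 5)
Your overall strategy is exactly the paper's: verify that the Andrew--Martino tree is a non-trivial aperiodic $\Phi$-aligned tree (via Definition~\ref{defn:aligned_splitting}, Lemma~\ref{lemma:non-periodic_trees} and neatness of UPG automorphisms) and then invoke Proposition~\ref{prop:aligned_acylindrical_trees}. Two small differences are worth flagging. First, Item~\ref{it:linear-center} is \emph{not} stated in \cite[Proposition~5.2.2]{AndrewMartino2022}; the paper derives it from Items~\ref{it:linear-black-vertex-stabs}--\ref{it:linear-edge-stabs} by noting that the edge stabiliser, being a maximal $\ZZ^2$ in the vertex group, must absorb the centre $\langle t_v\rangle$. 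Your own use of Item~\ref{it:linear-center} to verify Definition~\ref{defn:aligned_splitting}\ref{it:aligned_splittings_black} is fine once this is fixed. Second, your verification of Definition~\ref{defn:aligned_splitting}\ref{it:aligned_splitting_ffs} takes a detour through an interior vertex of $V_1(\mathcal T)$ on the geodesic joining $w_1$ and $w_2$; the paper argues more directly. Since $F_{u_1}$ and $F_{u_2}$ are maximal cyclic, a non-trivial intersection forces $F_{u_1}=F_{u_2}$, and then $t_{u_2}t_{u_1}^{-1} \in F$ centralises this cyclic group, hence lies in $F_{u_1}$; this gives $\Stab_G(u_1)=\Stab_G(u_2)$ immediately, with no need to inspect edge stabilisers at an intermediate vertex. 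Both versions bottom out at the same input that you correctly identified as the remaining gap, namely that in the Andrew--Martino construction distinct vertices of $V_0(\mathcal T)$ have distinct stabilisers; the paper states this as a property of their construction and does not reprove it.
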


\begin{proof}
    The construction in Proposition~5.2.2 of \cite{AndrewMartino2022} gives a $G$-tree $\mathcal{T}$ which satisfies Items~\ref{it:linear-black-vertex-stabs}--\ref{it:linear-edge-stabs}.
    
    For Item~\ref{it:linear-center}, we note that for each edge $e$ in $\mathcal{T}$  and incident vertex $v$, the edge stabiliser $\Stab_G(e)$ embeds as a maximal abelian subgroup of $\Stab_G(v)$ and thus contains its center.

    For Item~\ref{it:linear-acylindrical}, we observe that Items~\ref{it:linear-black-vertex-stabs} and \ref{it:linear-vertex-stabs} imply that the $G$-tree $\mathcal{T}$ is $\Phi$-aligned. Indeed, for any two vertices $u_1, u_2 \in V_0(\mathcal{T})$, we have that $\Stab_G(u_i) \cap F = F_{u_i}$ for $i = 1,2$. Since each $F_{u_i}$ is a maximal cyclic subgroup of $F$, if the intersection $F_{u_1} \cap F_{u_2}$ is non-trivial then $F_{u_1} = F_{u_2}$. Since $t_{u_2}$ centralises $F_{u_2} = F_{u_1}$, it follows that $t_{u_2} \in F_{u_1} \times \langle t_{u_1} \rangle$ and $t_{u_2} = xt_{u_1}$, for some $x \in F_{u_1}$. Hence,
    \[\langle F_{u_2}, t_{u_2}\rangle = \langle F_{u_1}, xt_{u_1} \rangle = \langle F_{u_1}, t_{u_1} \rangle.\] However, by the construction of the tree $\mathcal{T}$ in \cite[Proposition~5.2.2]{AndrewMartino2022}, for any two vertices $u_1, u_2 \in V_0(\mathcal{T})$, if $\Stab_G(u_1) = \Stab_G(u_2)$ then $u_1 = u_2$. Hence Item~\ref{it:aligned_splitting_ffs} of Definition~\ref{defn:aligned_splitting} is satisfied.

    For Item~\ref{it:aligned_splittings_black}, we define the map
    \[\begin{split}
        V_1(\mathcal{T}) &\mapsto \pi^{-1}(\Phi) \\
        v &\mapsto \varphi_v
    \end{split}\]
    so that $\varphi_v \in \Aut(F)$ is the automorphism of $F$ induced by the conjugation action of $t_v$ on $F$ in $G$. If $u \in V_0(\mathcal{T})$ is adjacent to $v$ in $\mathcal{T}$, then the element $t_v$ is contained in the stabiliser of $u$, since $\Stab_G(u)$ embeds as a maximal $\ZZ^2$-subgroup of $\Stab_G(v) = F_v \times \langle t_v \rangle$. The map is clearly injective.
    
    Moreover, since the outer automorphism $\Phi$ is UPG, it is neat and thus by Lemma~\ref{lemma:non-periodic_trees}, the action on $\mathcal{T}$ is aperiodic. Thus by Proposition~\ref{prop:aligned_acylindrical_trees}, the action is 4-acylindrical.
\end{proof}

\begin{remark}\label{rmk:edge-elements}
    By Lemma~\ref{lem:linear-growth-splitting}, every edge of the tree $\mathcal{T}$ is stabilised by an element of the coset $Ft$, and therefore by an element of every coset of $F$ in $G$. Hence, the quotient graph $G \backslash \mathcal{T}$ is isomorphic to the quotient $F \backslash \mathcal{T}$ where the action of $F$ on $\mathcal{T}$ is obtained by restricting the $G$-action. It follows that in the induced graph-of-groups splitting of the mapping torus $G$ we may always take the edge elements to be contained in the subgroup $F$.
\end{remark}

Finally, in order to show that the splitting described above satisfies assumption~\ref{it:cyclic-qalg} of Proposition~\ref{prop:cyclic-ext}, we will use the following result, stating that quasi-algebraicity of subgroups passes to finite extensions of groups. Our proof closely follows the proof of its special case when $K = \{1\}$ \cite[Theorem~1]{BaumslagMyasnikovRomankov1997}.

\begin{prop} \label{prop:quasi-alg-extensions}
Let $G$ be a group, let $H \leq G$ be a finite-index subgroup, and let $K$ be a quasi-algebraic subgroup of $H$. Then $K$ is quasi-algebraic in $G$.
\end{prop}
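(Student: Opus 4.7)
The plan is to use the ultrafilter characterisation of quasi-algebraicity from Theorem~\ref{thm:en-characterisation}. Fix a non-principal ultrafilter $\omega$ and an arbitrary sequence of homomorphisms $(\varphi_j\colon \mathbb{F}_m \to G)_{j \in \NN}$; it suffices to show that $\varphi_{j,\omega}^{-1}(K) \subseteq \varphi_j^{-1}(K)$ $\omega$-almost surely. The strategy is to isolate the elements of $\mathbb{F}_m$ that $\omega$-almost surely map into $H$ and apply the quasi-algebraicity of $K$ inside $H$ to the resulting restricted picture. Since $H$ need not be normal in $G$, the natural object to work with is the permutation representation $\rho\colon G \to \Sym(G/H)$ on the finite coset space.

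The key observation is that, because $\mathbb{F}_m$ is finitely generated and $\Sym(G/H)$ is finite, there are only finitely many homomorphisms $\mathbb{F}_m \to \Sym(G/H)$. By finite additivity of $\omega$, the composition $\rho \circ \varphi_j$ $\omega$-almost surely equals some fixed homomorphism $\bar\rho\colon \mathbb{F}_m \to \Sym(G/H)$. Setting $M \coloneqq \{ g \in \mathbb{F}_m \mid \bar\rho(g)(H) = H \}$ gives a subgroup of index at most $[G:H]$ in $\mathbb{F}_m$, hence a finitely generated free group by Nielsen--Schreier. Moreover, since $\rho \circ \varphi_j$ $\omega$-almost surely agrees with $\bar\rho$ as a function on all of $\mathbb{F}_m$, we $\omega$-almost surely have $\varphi_j^{-1}(H) = M$ as subsets of $\mathbb{F}_m$. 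In particular, $\omega$-almost surely $\varphi_j$ restricts to a homomorphism $\varphi_j|_M\colon M \to H$, so applying Theorem~\ref{thm:en-characterisation} to this restricted sequence and the quasi-algebraic subset $K \subseteq H$ yields $(\varphi_j|_M)_\omega^{-1}(K) \subseteq (\varphi_j|_M)^{-1}(K)$ $\omega$-almost surely.

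To conclude, let $g \in \mathbb{F}_m$ satisfy $\varphi_j(g) \in K$ $\omega$-almost surely. Since $K \subseteq H$, also $\varphi_j(g) \in H$ $\omega$-almost surely, which by the preceding paragraph forces $g \in M$; the conclusion for $\varphi_j|_M$ then delivers $\varphi_j(g) \in K$ $\omega$-almost surely, as required. The one mildly subtle point is the passage from ``$\rho \circ \varphi_j = \bar\rho$ $\omega$-almost surely'' (an equality of homomorphisms) to ``$\varphi_j^{-1}(H) = M$ $\omega$-almost surely'' (a single subset equality holding uniformly in $g$), but this is immediate because the former is an equality of functions that can be verified on the finitely many free generators of $\mathbb{F}_m$ and then propagated by finite additivity of $\omega$. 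I do not anticipate any serious obstacle: the whole argument is essentially a translation, via Theorem~\ref{thm:en-characterisation}, of the classical coset-counting argument of Baumslag--Myasnikov--Romanovskii, with the only genuine novelty being that we track the quasi-algebraic subset $K$ rather than the identity.
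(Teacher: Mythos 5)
Your argument is correct, and it takes a genuinely different route from the paper's. The paper proves the proposition by a chain of reductions on the \emph{target} group: first it replaces $H$ by its normal core $N$ (using that $K \cap N$ is quasi-algebraic in $N$ and that $K$ is a finite union of translates of $K\cap N$), then it embeds $G$ into the wreath product $H \wr Q$ to reduce to the case of a split extension $G = H \rtimes Q$, and finally it rewrites positive equations over $H \rtimes Q$ in terms of the $H$- and $Q$-components, following the Baumslag--Myasnikov--Romanovskii argument for the special case $K=\{1\}$. Your proof instead works entirely inside the Groves--Hull ultrafilter framework (Theorem~\ref{thm:en-characterisation}), keeping the target fixed and modifying the \emph{source}: you pass to the finite-index subgroup $M = \bar\rho^{-1}(\Stab(H)) \leq \mathbb{F}_m$, which by Nielsen--Schreier is again a finitely generated free group, and on which $\varphi_j$ $\omega$-almost surely lands in $H$, so that quasi-algebraicity of $K$ in $H$ can be applied directly to the restricted sequence. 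This avoids both the wreath-product embedding and the positive-equation rewriting, and it has the pleasant feature that it never needs normality of $H$ or the splitting of the extension. The one thing worth making explicit in a write-up is the definition of the sequence $\psi_j\colon M \to H$ for the $\omega$-null set of $j$ where $\varphi_j(M)\not\subseteq H$ (say, by declaring $\psi_j$ trivial there); once that is said, the final deduction -- that $\varphi_{j,\omega}^{-1}(K)$ is a fixed subset of $\mathbb{F}_m$, equal to $(\psi_j)_\omega^{-1}(K)$, and hence $\omega$-almost surely contained in $\psi_j^{-1}(K) \subseteq \varphi_j^{-1}(K)$ -- goes through exactly as you describe.
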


\begin{proof}
We first reduce to the case when $G = H \rtimes Q$ for a finite group $Q$. We do this in two steps: reducing to the case when $H$ is normal in $G$, and then reducing to the case when the extension of $H$ by $G/H$ is split.

Let $N = \bigcap_{g \in G} g^{-1}Hg$, so that $N$ is a finite-index subgroup of $H$. Since $K$ is quasi-algebraic in $H$, it then follows that $K \cap N$ is quasi-algebraic in $N$, since $V_{N,K \cap N}(S) = V_{H,K}(S) \cap N^m$ for all $S \subseteq \mathbb{F}_m$. On the other hand, $K \cap N$ is a finite-index subgroup of $K$, and finite extensions of quasi-algebraic subgroups are quasi-algebraic (this follows since translates and finite unions of quasi-algebraic subsets are quasi-algebraic \cite[Proposition~7.5]{Valiunas2021}); it is thus enough to show that $K \cap N$ is quasi-algebraic in $G$. We may therefore replace $H$ and $K$ with $N$ and $K \cap N$, respectively, to assume that $H$ is normal in $G$.

Now let $Q = G/H$, and note that there is an injective homomorphism $\phi\colon G \to H \wr Q$ sending any element $h \in H$ to an element $(h_q \mid q \in Q) \in H^Q \leq H \wr Q$ such that $h_1 = h$, and any element of $G \setminus H$ to an element outside $H^Q$: see \cite[Theorem~22.21]{Neumann1967}. In particular, we have $K = \phi^{-1}(K')$, where $K' = \pi^{-1}(K)$ for the projection $\pi\colon H^Q \to H$ sending $(h_q \mid q \in Q) \mapsto h_1$, so since $\phi$ is injective it is enough to show that $K'$ is quasi-algebraic in $H \wr Q$. Note, however, since $K$ is quasi-algebraic in $H$ it follows that $K'$ is quasi-algebraic in $H^Q$ (we may use the same finite subsequence of equations in both cases to show this). We may therefore replace $G$, $H$ and $K$ with $H \wr Q$, $H^Q$ and $K'$, respectively, to assume that the short exact sequence $1 \to H \to G \to Q \to 1$ is split, i.e.\ $G = H \rtimes Q$.

We now claim that given any system $S \subseteq \mathbb{F}_m$ of positive equations (i.e.\ words only containing positive powers of the basis elements of $\mathbb{F}_m$), there exists a finite subsystem $S_0 \subseteq S$ such that $V_{G,K}(S_0) = V_{G,K}(S)$. Such a result will imply that $K$ is quasi-algebraic in $G$: see \cite[Lemma~3.1]{Valiunas2021} and its proof.

Given a positive equation $s = X_{j_1} \cdots X_{j_n} \in \mathbb{F}_m(X_1,\ldots,X_m)$ and elements $h_1,\ldots,h_m \in H$ and $q_1,\ldots,q_m \in Q$, note that we can write
\[
s(h_1q_1,\ldots,h_mq_m) = s(q_1,\ldots,q_m) \cdot \prod_{i=1}^n h_{j_i}^{q_i'},
\]
where $q_i' = q_{j_i} \cdots q_{j_n}$. The first and the second terms in this expression are elements of $Q$ and $H$, respectively, implying that $s(h_1q_1,\ldots,h_mq_m) \in K$ if and only if the first of these terms is equal to the identity and the second one is in $K$. Note, moreover, that the tuple $(q_1',\ldots,q_n')$ depends only on the tuple $(q_1,\ldots,q_m)$; therefore, if we fix $\mathbf{q} = (q_1,\ldots,q_m) \in Q^m$ then given $(h_1,\ldots,h_m) \in H^m$ we have $\prod_{i=1}^n h_{j_i}^{q_i'} \in K$ if and only if $s'_{\mathbf{q}}(h_j^q \mid 1 \leq j \leq m, q \in Q) \in K$, where we define the equation $s'_{\mathbf{q}} \coloneqq \prod_{i=1}^n Y_{j_i}^{(q_i')} \in \mathbb{F}_{m|Q|}(Y_j^{(q)} \mid 1 \leq j \leq m, q \in Q)$.

Now given any collection of positive equations $S \subseteq \mathbb{F}_m$ and a tuple $\mathbf{q} = (q_1,\ldots,q_m) \in Q^m$, let $S[\mathbf{q}] \coloneqq \{ s'_{\mathbf{q}} \mid s \in S \}$. Fix such a collection $S \subseteq \mathbb{F}_m$. Since $Q$ is finite, and therefore equationally Noetherian, there exists a finite subset $S_0 \subseteq S$ such that $V_Q(S_0) = V_Q(S)$. Moreover, since $K$ is quasi-algebraic in $H$, for each $\mathbf{q} \in Q^m$ there exists a finite subset $S_{\mathbf{q}} \subseteq S$ such that $V_{H,K}(S_{\mathbf{q}}[\mathbf{q}]) = V_{H,K}(S[\mathbf{q}])$. Now let $S' = S_0 \cup \bigcup_{\mathbf{q} \in Q^m} S_{\mathbf{q}}$, and note that since $Q$ is finite so is $S'$. Then we have $V_Q(S') = V_Q(S)$ and $V_{H,K}(S'[\mathbf{q}]) = V_{H,K}(S[\mathbf{q}])$ for all $\mathbf{q} \in Q^m$, so it follows from the above argument that $V_{G,K}(S') = V_{G,K}(S)$. This implies that $K$ is quasi-algebraic in $G$, as required.
\end{proof}

\begin{proof}[Proof of Proposition~\ref{prop:en-linear}]
By Lemma~\ref{lem:linear-growth-splitting}, the group $G$ (and its subgroup $F$) admits a splitting satisfying assumptions \ref{it:cyclic-Zv}, \ref{it:cyclic-He} and \ref{it:cyclic-acyl} of Proposition~\ref{prop:cyclic-ext}, and by Remark~\ref{rmk:edge-elements} it also satisfies assumption \ref{it:cyclic-edges-in-G}. Moreover, each group $F_v$ (for $v \in V(\mathcal{G})$) appearing in the statement of Proposition~\ref{prop:cyclic-ext} is a finitely generated subgroup of the finitely generated free group $F$, so by Marshall Hall's Theorem \cite[Proposition~I.3.10]{LyndonSchupp1977} it is a free factor, and therefore a retract, of a finite-index subgroup $H_v \leq F$ containing $F_v$. Since the group $H_v$ is finitely generated free, and therefore equationally Noetherian, it follows from \cite[Proposition~7.5(v)]{Valiunas2021} that $F_v$ is quasi-algebraic in $H_v$. Therefore, by Proposition~\ref{prop:quasi-alg-extensions}, $F_v$ is quasi-algebraic in $F$, and so assumption~\ref{it:cyclic-qalg} of Proposition~\ref{prop:cyclic-ext} is satisfied. Thus $G$ is equationally Noetherian by Proposition~\ref{prop:cyclic-ext}.
\end{proof}

\section{The general case}
\label{s:superlinear}

In this section we prove Theorem~\ref{thm:main}. Using relative hyperbolicity of free-by-cyclic groups with exponentially growing monodromy \cite{DahmaniLi2022,Ghosh2023} and the fact that being equationally Noetherian is preserved under relative hyperbolicity \cite[Theorem~D]{GrovesHull2019}, Theorem~\ref{thm:main} will follow from the following special case thereof.

\begin{prop}\label{prop:polynomially-growing-EN}
Let $F$ be a finitely generated free group, and let $\Phi \in \Out(F)$ be a polynomially growing outer automorphism. Then $G = F \rtimes_\Phi \ZZ$ is equationally Noetherian.
\end{prop}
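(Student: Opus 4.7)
The plan is to reduce to the UPG case and then induct on the polynomial growth degree $d$ of $\Phi$. By Lemma~\ref{lem:upg-finite-index}, some positive power $\Phi^k$ is UPG with the same growth degree as $\Phi$, and $F \rtimes_{\Phi^k} \ZZ$ is a finite-index subgroup of $G = F \rtimes_\Phi \ZZ$. Since equational Noetherianity passes from a finite-index subgroup to the ambient group (the case $K = \{1\}$ of Proposition~\ref{prop:quasi-alg-extensions}), it suffices to prove the statement when $\Phi$ is UPG. I would then induct on $d$, phrasing the inductive hypothesis for the full polynomially growing statement so that it applies directly to the vertex groups produced by the splittings below.

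For the base cases, $d = 0$ gives that $\Phi$ has finite order in $\Out(F)$ by Lemma~\ref{lemma:slow-growth}; a suitable power of $\Phi$ is then trivial in $\Out(F)$, so a finite-index subgroup of $G$ is isomorphic to $F \times \ZZ$, which is equationally Noetherian, and another application of Proposition~\ref{prop:quasi-alg-extensions} (with $K = \{1\}$) shows that $G$ is as well. The case $d = 1$ is exactly Proposition~\ref{prop:en-linear}.

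For the inductive step $d \geq 2$, I would invoke \cite[Proposition~2.5]{AndrewHughesKudlinska2023} to obtain a splitting of $G$ as the fundamental group of a graph of groups with infinite cyclic edge groups and with vertex groups that are themselves free-by-cyclic with polynomially growing monodromies of growth degree strictly less than $d$. By the inductive hypothesis each such vertex group is equationally Noetherian. It then remains to show that the action of $G$ on the corresponding Bass--Serre tree $\mathcal{T}$ is 4-acylindrical: once this is in hand, the criterion \cite[Theorem~1.9]{Valiunas2021}, applied to a splitting with equationally Noetherian vertex groups, infinite cyclic edge groups and an acylindrical action, delivers the conclusion.

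The main obstacle is the 4-acylindricity of $G \curvearrowright \mathcal{T}$. My plan for this is to fit $\mathcal{T}$ (after a bipartite subdivision if necessary) into the framework of Definition~\ref{defn:aligned_splitting} as a $\Phi$-aligned tree, and then invoke Proposition~\ref{prop:aligned_acylindrical_trees}; the aperiodicity hypothesis is free from Lemma~\ref{lemma:non-periodic_trees} because UPG outer automorphisms are neat. The two items of Definition~\ref{defn:aligned_splitting} should correspond to concrete structural features of the splitting from \cite{AndrewHughesKudlinska2023}: the infinite cyclic edge stabilisers are generated by lifts of $\Phi$, supplying the injective assignment $v \mapsto \varphi_v$ in item~\ref{it:aligned_splittings_black}, while the fact that the splitting is induced by a $\Phi$-invariant free splitting of the fibre $F$ gives the trivial intersection condition in item~\ref{it:aligned_splitting_ffs}. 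Extracting these features cleanly from \cite{AndrewHughesKudlinska2023} and verifying that they imply the definition is the main technical step to complete the proof.
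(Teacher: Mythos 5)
Your overall strategy matches the paper's: reduce to the UPG case via Lemma~\ref{lem:upg-finite-index} and finite-index preservation, induct on the polynomial growth degree, dispatch degrees $0$ and $1$ as you describe, and for degree $d \geq 2$ use the splitting of \cite[Proposition~2.5]{AndrewHughesKudlinska2023} together with \cite[Theorem~1.9]{Valiunas2021}. However, the route you propose for proving acylindricity of $G \curvearrowright \mathcal{T}$ has a genuine gap. You want to subdivide $\mathcal{T}$, verify Definition~\ref{defn:aligned_splitting}, and invoke Proposition~\ref{prop:aligned_acylindrical_trees}. Item~\ref{it:aligned_splitting_ffs} is indeed free, since the $F$-action is a free splitting. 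But item~\ref{it:aligned_splittings_black} requires the assignment $v \mapsto \varphi_v$ on the subdivision midpoints to be \emph{injective}, i.e.\ that distinct edges of $\mathcal{T}$ have distinct $G$-stabilisers. This is nothing like automatic: two distinct edges incident to a common vertex could a priori have identical $G$-stabilisers $\langle t_e\rangle$, consistently with the $F$-stabilisers being trivial. Ruling this out is essentially the acylindricity claim itself, and the proof of Proposition~\ref{prop:aligned_acylindrical_trees} leans on injectivity at exactly its final step (the contradiction is $v_1 \neq v_2$ with $\varphi_{v_1} = \varphi_{v_2}$), so you cannot do without it. The Guirardel--Horbez construction (Theorem~\ref{thm:GH}) makes the assignment injective by \emph{defining} $V_1$-vertices as $\sim$-classes of edges with equal $G_\Phi$-stabiliser, but the resulting tree is not the Andrew--Hughes--Kudlinska tree and its vertex stabilisers need not be free-by-cyclic of lower degree, so it cannot feed the inductive hypothesis. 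Contrast this with the linear case (Lemma~\ref{lem:linear-growth-splitting}), where injectivity is a built-in feature of the Andrew--Martino tree.

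The paper closes this gap by a direct argument in Lemma~\ref{lem:polynomial-growth-splitting}, bypassing the aligned-tree framework and using the eigen-ray Proposition~\ref{prop:eigen-rays} instead. The key point—available only when $d \geq 2$—is that the suffix paths $\rho_k$ of the top-stratum edges $e_k$ satisfy $[f(\rho_k)] \neq \rho_k$, so any path $\sigma$ crossing $e_k$ eventually has $[f^r(\sigma)]$ containing arbitrarily long initial segments of the eigen-ray $e_k\rho_k[f(\rho_k)]\cdots$. If a non-trivial $g = (f')^r$ fixed a $2$-path $e_k'e'$ in $\mathcal{T}$, the corresponding geodesic $\sigma$ in $\Gamma$ would satisfy $[f^{pr}(\sigma)] = \sigma$ for all $p$, contradicting the eigen-ray behaviour; this gives $2$-acylindricity. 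Your proposal should be repaired by replacing the aligned-tree step with this argument. (A minor additional omission: \cite[Theorem~1.9]{Valiunas2021} also requires the edge groups to be quasi-algebraic in the incident vertex groups and requires bounds on the composite homomorphisms along closed paths; the paper verifies these using the retractions $r_e\colon G_{i(e)} \to \langle t_e\rangle$ killing $F_{i(e)}$.)
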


We prove Proposition~\ref{prop:polynomially-growing-EN} by induction on the growth rate of the monodromy. The key tool is the existence of graph-of-groups splittings where the edge groups are infinite cyclic and the vertex groups are free-by-cyclic with monodromies which grow polynomially and of strictly lower degree. It has been observed by multiple authors (e.g. see \cite{Hagen2019} and \cite{AndrewHughesKudlinska2023}) that the existence of unipotent representatives for UPG outer automorphisms, combined with the work of Macura \cite{Macura2002}, gives rise to such splittings for free-by-cyclic groups with UPG monodromy which grows at least quadratically. We will moreover show that the action on the Bass--Serre tree corresponding to the splitting is 2-acylindrical. 

\begin{lem} \label{lem:polynomial-growth-splitting}
    Let $F$ be a free group of finite rank and let $\varphi \in \Aut(F)$ be an automorphism representing a UPG element of $\Out(F)$ with polynomial growth of degree $d \geq 2$. Let $G = F \rtimes_{\varphi} \langle t \rangle$ be the corresponding mapping torus. Then there exists a $G$-tree $\mathcal{T}$ with the following properties. 
    \begin{enumerate}
        \item\label{it:splitting-1} Each vertex stabiliser is of the form $F_v \rtimes_{\psi} \langle t_v \rangle$ where $F_v \leq F$ is a finitely generated subgroup of $F$, $t_v \in F t$ and the automorphism $\psi \in \Aut(F_v)$ represents a unipotent automorphism with polynomial growth of order $d_v$ where $1 \leq d_v < d$.
        \item\label{it:splitting-2} Each edge stabiliser is of the form $\langle t_e \rangle$ where $t_e \in Ft$.
        \item\label{it:splitting-3} The action of $G$ on $\mathcal{T}$ is $2$-acylindrical.
    \end{enumerate}
\end{lem}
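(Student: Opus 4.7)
The plan is to construct $\mathcal{T}$ via the mapping-torus decomposition associated to a unipotent representative of $\Phi$, and then to verify the three listed properties separately. Passing to a unipotent representative is legitimate because $\Phi$ is already assumed UPG; the hypothesis $d \geq 2$ is used critically to ensure that the top-level edges of the filtration have non-constant twist paths that themselves grow, which is what allows the construction to produce vertex groups of \emph{strictly} smaller growth degree.

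Concretely, I would fix a unipotent representative $(f,\Gamma)$ of $\Phi$ with maximal filtration $\emptyset = \Gamma^{(0)} \subset \cdots \subset \Gamma^{(l)} = \Gamma$, form the mapping torus $M_f$, and cut along the natural annuli dual to the top-level edges of the filtration, as in Macura's work and as formalised in \cite[Proposition~2.5]{AndrewHughesKudlinska2023}. Taking fundamental groups yields a graph-of-groups decomposition of $G$ whose Bass--Serre tree $\mathcal{T}$ has as vertex groups the fundamental groups of mapping tori of $f$ restricted to $f$-invariant subgraphs of $\Gamma^{(l-1)}$; each such vertex group is of the form $F_v \rtimes_\psi \langle t_v \rangle$ with $\psi$ UPG of growth degree $d_v < d$, establishing \ref{it:splitting-1}. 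Each edge group is infinite cyclic, generated by the stable letter of one of the annuli, which has $t$-exponent $1$ in $G/F$ and hence lies in $Ft$, establishing \ref{it:splitting-2}.

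For the $2$-acylindricity \ref{it:splitting-3}, suppose $g \in G$ pointwise fixes a length-$3$ segment in $\mathcal{T}$ with edges $e_1, e_2, e_3$, so that $g = t_{e_i}^{n_i}$ for each $i$. Projecting to $G/F \cong \ZZ$ and using that each $t_{e_i}$ has $t$-exponent $1$ gives a common integer $n = n_1 = n_2 = n_3$; the case $n = 0$ forces $g = 1$ immediately. For $n \neq 0$, I set $u = t_{e_1} t_{e_2}^{-1} \in F$ and $\alpha = \ad(t_{e_2})$, so that the equality $t_{e_1}^n = t_{e_2}^n$ unfolds into $u \cdot \alpha(u) \cdots \alpha^{n-1}(u) = 1$. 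Applying $\alpha$ to this relation and cancelling gives $\alpha^n(u) = u$; since $\alpha$ represents the UPG outer automorphism $\Phi$ it is neat (Bestvina--Feighn--Handel), so $\alpha(u) = u$, whence $u^n = 1$ and therefore $u = 1$ because $F$ is torsion-free. Thus $t_{e_1} = t_{e_2}$, and the same argument gives $t_{e_2} = t_{e_3}$, so a single element $s = t_{e_1}$ pointwise fixes the length-$3$ segment.

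The main obstacle, and the step I expect to require the most care, is ruling out such an $s$. The cleanest route seems to be to use the specific combinatorial shape of the Macura splitting: the edges of $\mathcal{T}$ incident to a given vertex $v_1$ break into finitely many $\Stab(v_1)$-orbits corresponding to edges of the quotient graph of groups, and inside $\Stab(v_1) = F_{v_1} \rtimes \langle t_{v_1} \rangle$ the cyclic group $\langle s \rangle$ would have to coincide simultaneously with the stabilisers of two different adjacent edges. A direct analysis of the peripheral cyclic subgroups arising from the annuli, together with a second application of neatness (to rule out the case where two such peripheral subgroups are conjugate inside $\Stab(v_1)$ via an element whose conjugation action on $\langle s \rangle$ is non-trivial), should preclude this configuration. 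Carrying this final step out rigorously is the crux of the proof.
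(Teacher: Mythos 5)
Your construction and the verification of properties \ref{it:splitting-1} and \ref{it:splitting-2} match the paper: both use the Macura-style cut of the mapping torus along top-stratum annuli as packaged in \cite[Proposition~2.5]{AndrewHughesKudlinska2023}. Your first reduction for \ref{it:splitting-3} is also correct and is a nice observation: if $g$ fixes a length-$3$ segment with edge stabilisers $\langle t_{e_1}\rangle, \langle t_{e_2}\rangle, \langle t_{e_3}\rangle$, then projecting to $\ZZ$ equalises the exponents, and the cocycle/telescoping calculation combined with neatness of UPG automorphisms (which is exactly the paper's Lemma~\ref{lemma:non-periodic_trees} applied to $\ad(t_{e_i})$) forces $t_{e_1} = t_{e_2} = t_{e_3}$. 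Up to here your route is genuinely different from the paper's, which does not pass through a ``common generator'' step at all.

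However, the remaining step --- ruling out a single element $s \in Ft$ that fixes three consecutive edges --- is not a loose end you can defer; it is the entire content of the lemma, and you explicitly acknowledge that you have not carried it out. More importantly, the tool you gesture at (``a second application of neatness'') cannot close the gap: neatness is a statement about periodic points of an automorphism, whereas what actually prevents $s$ from fixing two adjacent top-stratum edges is a growth obstruction. The paper's proof isolates this precisely: if $s = (f')^r$ fixes $e_k'$ and an adjacent edge, then the lifted map $\widetilde{f}^r$ fixes the endpoints of a geodesic $\widetilde\sigma$ in $\widetilde\Gamma$ whose projection $\sigma$ to $\Gamma$ crosses the top-stratum edge $e_k$. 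Since $d \geq 2$ forces $[f(\rho_k)] \neq \rho_k$, Proposition~\ref{prop:eigen-rays} applies and the iterates $[f^{pr}(\sigma)]$ must eventually contain arbitrarily long initial segments of the eigen-ray $e_k\rho_k[f(\rho_k)]\cdots$, contradicting $[f^{pr}(\sigma)] = \sigma$. This is where the hypothesis $d \geq 2$ is actually used in the acylindricity argument, and it is a dynamical input that no purely algebraic argument about conjugacy of peripheral cyclic subgroups or about fixed points of $\ad(s)$ is going to replace. To repair your proof you would need to invoke the eigen-ray proposition (or an equivalent attractivity statement for top-stratum laminations) at exactly the point where you currently stop.
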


\begin{proof}
The splitting constructed in \cite{AndrewHughesKudlinska2023} satisfies conditions \ref{it:splitting-1} and \ref{it:splitting-2}. Hence it remains to show that condition \ref{it:splitting-3} is satisfied. We use the following explicit description of the splitting described in \cite[Proposition~2.5]{AndrewHughesKudlinska2023}. Let $\Phi \in \Out(F)$ be a UPG outer automorphism with growth of degree $d \geq 2$, and let $f\colon \Gamma \to \Gamma$ be a unipotent representative for $\Phi$ (referred to as the ``improved relative train track'' in \cite[\S2]{AndrewHughesKudlinska2023}). Let $e_1,\ldots,e_m$ be the edges of $\Gamma$ whose growth has degree $d$, and let $\Gamma_1,\ldots,\Gamma_n$ be the connected components obtained by removing from $\Gamma$ the interiors of the edges $e_i$. Then $f$ preserves each $\Gamma_j$, and we have $f(e_k) = e_k \rho_k$ for all $k$, where $\rho_k$ is a closed path in the connected component $\Gamma_{j_k}$ that contains $i(\overline{e_k})$. Moreover, since each $e_k$ has growth of order $d \geq 2$, we have that $[f(\rho_k)] \neq \rho_k$.

The splitting graph of groups $\mathcal{G}$ for $G = F \rtimes_\Phi \ZZ$ can then be described as follows. Let the underlying graph $|\mathcal{G}|$ of $\mathcal{G}$ be the graph obtained from $\Gamma$ by collapsing each $\Gamma_j$ to a vertex $v_j$. The map $f$ then induces the identity map on $|\mathcal{G}|$, and therefore lifts to an automorphism $f'\colon \mathcal{T} \to \mathcal{T}$, where $\mathcal{T}$ is the Bass--Serre tree corresponding to $\mathcal{G}$. Blowing up every lift of $v_j$ in $\mathcal{T}$ to a tree isomorphic to $\widetilde\Gamma_j$ we obtain the universal cover $\widetilde\Gamma$ of $\Gamma$, and the map $f$ lifts to a quasi-isometry $\widetilde{f}\colon \widetilde\Gamma \to \widetilde\Gamma$ that induces the map $f'$ when the subtrees corresponding to translates of $\widetilde\Gamma_j$ are collapsed. In particular, the maps $f$, $f'$ and $\widetilde{f}$ fit into the diagram
\[
\begin{tikzcd}
\widetilde\Gamma \ar[rrr, "\widetilde{f}", dashed] \ar[ddd, "q"', two heads] \ar[dr, two heads] &&& \widetilde\Gamma \ar[ddd, "q", two heads, pos=0.6] \ar[dr, two heads] \\
& \Gamma \ar[rrr, "f", dashed, crossing over, pos=0.4] &&& \Gamma \ar[ddd, two heads] \\ \\
\mathcal{T} \ar[rrr, "f'", pos=0.6] \ar[dr, two heads] &&& \mathcal{T} \ar[dr, two heads] \\
& \left|\mathcal{G}\right| \ar[rrr, equal] \ar[from=uuu, two heads, crossing over] &&& \left|\mathcal{G}\right|
\end{tikzcd}
\]
where the dashed arrows are maps that are not necessarily simplicial and the vertical arrows correspond to collapsing the subgraphs $\Gamma_j$ or their lifts, and where the front face commutes up to homotopy, the back face up to $F$-equivariant quasi-isometry, and the other four faces commute strictly.

Now by the construction, the edge stabilisers of the $G$-action on $\mathcal{T}$ are cyclic, generated by appropriate maps $f'\colon \mathcal{T} \to \mathcal{T}$ as above. On the other hand, if a lift $\widetilde{e}_k \in E(\widetilde\Gamma)$ of the edge $e_k \in E(\Gamma)$ has its image $q(\widetilde{e}_k) \in E(\mathcal{T})$ stabilised by such an $f'$, it follows that we have $\widetilde{f}(\widetilde{e}_k) = \widetilde{e}_k \widetilde{\rho}_k$, where $\widetilde{\rho}_k$ is a path in an appropriate lift $\widetilde\Gamma_j$ and therefore $q(\widetilde{\rho}_k)$ is a single vertex of $\mathcal{T}$.

We claim that if a non-trivial element $g \in G$ stabilises a lift $e_k' \in E(\mathcal{T})$ of the edge $e_k$, then it cannot stabilise any other edge of $\mathcal{T}$ incident to $i(\overline{e_k'})$. Indeed, suppose for contradiction that $g$ stabilises a $2$-path $e_k' e'$ in $\mathcal{T}$, where $e'$ is a lift of some $e \in \{ e_\ell,\overline{e_\ell} \}$. Then $g = (f')^r$ for some $r \neq 0$ (without loss of generality, $r > 0$) and a map $f'$ as above. Letting $\widetilde{e}_k$ and $\widetilde{e}$ be the unique edges of $\widetilde\Gamma$ such that $q(\widetilde{e}_k) = e_k'$ and $q(\widetilde{e}) = e'$, it follows that $\widetilde{f}^r(\widetilde{e}_k) = \widetilde{e}_k \widetilde{\rho}_k$, and either $\widetilde{f}^r(\widetilde{e}) = \widetilde{e} \widetilde{\rho}_\ell$ (if $e = e_\ell$) or $\widetilde{f}^r(\widetilde{e}) = \overline{\widetilde{\rho}_\ell} \widetilde{e}$ (if $e = \overline{e_\ell}$), where the paths $\widetilde{\rho}_k$ and $\widetilde{\rho}_\ell$ have trivial images under $q$. In particular, $\widetilde{f}^r$ fixes the vertices $v,w \in \widetilde\Gamma$, where $v = i(\widetilde{e}_k)$ and either $w = i(\widetilde{e})$ (if $e = e_l$) or $w = i(\overline{\widetilde{e}})$ (if $e = \overline{e_l}$), implying that $[\widetilde{f}^r(\widetilde{\sigma})] = \widetilde{\sigma}$, where $\widetilde{\sigma}$ is the geodesic path in $\widetilde\Gamma$ from $v$ to $w$.

Now it follows that $[f^r(\sigma)] = \sigma$, and therefore $[f^{pr}(\sigma)] = \sigma$ for all $p \geq 1$, where $\sigma$ is the image of $\widetilde{\sigma}$ in $\Gamma$ under the covering map. Moreover, by construction we have either $\sigma = e_k \sigma'$ (if $e = e_\ell$) or $\sigma = e_k \sigma' \overline{e_\ell}$ (if $e = \overline{e_\ell}$), where $\sigma'$ is a path contained in some component $\Gamma_j$ (the one containing $i(\overline{e_k})$); in the latter case, since $\widetilde{e}_k \neq \overline{\widetilde{e}}$, we have either $\ell \neq k$ or $\sigma'$ non-trivial. In particular, it follows that the path $\sigma$ crosses $e_k$. But then by Proposition~\ref{prop:eigen-rays}, given any initial segment $\tau$ of the infinite ray $e_k \rho_k [f(\rho_k)] [f^2(\rho_k)] \cdots$, one of $\tau$ and $\overline{\tau}$ is contained in $[f^q(\sigma)]$ for all $q$ large enough. If we take $\tau$ to be longer than $\sigma$, this contradicts the fact that $[f^{pr}(\sigma)] = \sigma$ for all $p \geq 1$. This proves the claim: that is, if a non-trivial element of $G$ stabilises a lift $e_k' \in E(\mathcal{T})$ of $e_k$ then it does not stabilise any other edge of $\mathcal{T}$ incident to $i(\overline{e_k'})$.

The claim now implies that if a non-trivial element $g \in G$ stabilises a lift $e_k' \in E(\mathcal{T})$ of $e_k$, then any other edge stabilised by $g$ is incident to $i(e_k')$. Consequently, if $v,w \in \mathcal{T}$ are two points distance $> 2$ apart, then no non-trivial element of $G$ stabilises both $v$ and $w$. In particular, the $G$-action on $\mathcal{T}$ is $2$-acylindrical, as required.
\end{proof}

\begin{proof}[Proof of Proposition~\ref{prop:polynomially-growing-EN}]

We begin by proving the result in the case that $\Phi$ is UPG. We prove the statement by induction on the degree of growth of $\Phi$. If $\Phi$ has constant growth then it has finite order in $\Out(F)$ by Lemma~\ref{lemma:slow-growth}, implying that $G$ contains $F \times \ZZ$ as a finite-index subgroup, so since $F \times \ZZ$ is equationally Noetherian so is $G$ \cite[Theorem~1]{BaumslagMyasnikovRomankov1997}. If $\Phi$ has linear growth, then the result follows from Proposition~\ref{prop:en-linear}.

Suppose that $\Phi$ has growth of degree $d \geq 2$, and let $\mathcal{G}$ be the graph of groups for the splitting of $G$ described in Lemma~\ref{lem:polynomial-growth-splitting}, and let $G_v = F_v \rtimes \langle t_v \rangle$ be the vertex group corresponding to $v \in V(\mathcal{G})$. We aim to use \cite[Theorem~1.9]{Valiunas2021} to show that $G$ is equationally Noetherian. Indeed, for any edge $e \in E(\mathcal{G})$ there exists a homomorphism $r_e\colon G_{i(e)} \to \langle t_e \rangle$ sending $F_{i(e)}$ to $1$ and $t_{i(e)}$ to $t_e$. We then have a map $\phi_e = \iota_{\overline{e}} \circ r_e\colon G_{i(e)} \to G_{i(\overline{e})}$ extending the isomorphism between the copies of $\langle t_e \rangle$ in the two vertex groups. Furthermore, since $t_v,t_e \in Ft$ for all $v \in V(\mathcal{G})$ and $e \in E(\mathcal{G})$ and since $\ker(r_e) = F_{i(e)}$ for all $e \in E(\mathcal{G})$, it follows that for any non-trivial closed path $e_1 \cdots e_n$ in $\mathcal{G}$ the composite $\phi_{e_n} \circ \cdots \circ \phi_{e_1}$ is equal to $\iota_{e_1} \circ r_{e_1}$. There are therefore only finitely many such composite homomorphisms, and thus the assumptions (i) and (ii) in \cite[Theorem~1.9]{Valiunas2021} are satisfied.

Now the action of $G$ on the corresponding Bass--Serre tree is acylindrical by Lemma~\ref{lem:polynomial-growth-splitting}\ref{it:splitting-3}. Furthermore, each of the groups $G_v$ is a free-by-cyclic group with UPG monodromy with growth of degree $< d$, so is equationally Noetherian by the inductive hypothesis. Since the maps $\iota_e \circ r_e$ are retractions of $G_{i(e)}$ onto $\iota_e(\langle t_e \rangle) \leq G_{i(e)}$, this also implies that $\iota_e(\langle t_e \rangle)$ is quasi-algebraic in $G_{i(e)}$: see \cite[Proposition~7.5(v)]{Valiunas2021}. It therefore follows from \cite[Theorem~1.9]{Valiunas2021} that $G$ is equationally Noetherian, as required.

For the general case, note that by Lemma~\ref{lem:upg-finite-index} there exists a positive integer $k$ such that $\Phi^k$ is UPG. Moreover, the mapping torus $F \rtimes_{\Phi^k} \ZZ$ can be identified with a finite-index subgroup of $G$. Hence by \cite[Theorem~1]{BaumslagMyasnikovRomankov1997}, the group $G$ is equationally Noetherian.
\end{proof}

\begin{proof}[Proof of Theorem~\ref{thm:main}]

If $\Phi$ is polynomially growing then by Proposition~\ref{prop:polynomially-growing-EN} the mapping torus $F \rtimes_{\Phi} \ZZ$ is equationally Noetherian. Otherwise, $\Phi$ is exponentially growing by the existence of relative train tracks \cite{BestvinaHandel1992}. Then by the work of Dahmani--Li \cite{DahmaniLi2022}, and independently Ghosh \cite{Ghosh2023}, the group $F \rtimes_{\Phi} \ZZ$ is hyperbolic relative to a finite collection of subgroups $\mathcal{P}$ where each $P_i \in \mathcal{P}$ is a free-by-cyclic group with polynomially growing monodromy. Then $F \rtimes_{\Phi} \ZZ$ is equationally Noetherian by Proposition~\ref{prop:polynomially-growing-EN} and \cite[Theorem~D]{GrovesHull2019}.
\end{proof}

\section{Well-ordering of growth rates}
\label{s:growth-rates}

Here we prove Corollary~\ref{cor:growth-rates-order}. Recall from the introduction that given a finitely generated group $G$, we denote by $\xi(G)$ the set of exponential growth rates of $G$. Recall, moreover, that a finitely generated subgroup $H = \langle T \rangle \leq G = \langle S \rangle$, where $|S|,|T| < \infty$, is said to be \emph{undistorted} if the map $\Cay(H,T) \to \Cay(G,S)$ induced by the inclusion $H \hookrightarrow G$ is a quasi-isometric embedding; this does not depend on the choices of $S$ and $T$.

\begin{lem} \label{lem:growth-rates}
Let $G$ be a finitely generated group, and let $Z \unlhd G$ be a finitely generated undistorted normal subgroup of sub-exponential growth. Then $\xi(G/Z) = \xi(G)$.
\end{lem}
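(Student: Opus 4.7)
The plan is to reduce the lemma to a single claim: for every finite generating set $S$ of $G$, the image $\bar{S} \subseteq G/Z$ of $S$ under the quotient map $\pi\colon G \to G/Z$ is a finite generating set of $G/Z$ satisfying $e(G,S) = e(G/Z, \bar{S})$. This claim immediately yields $\xi(G) \subseteq \xi(G/Z)$. For the reverse inclusion, given a finite generating set $\bar{S}$ of $G/Z$, I would lift $\bar{S}$ elementwise to some $S_0 \subseteq G$ and set $S = S_0 \cup T$, where $T$ is any finite generating set of $Z$ (which exists since $Z$ is finitely generated by hypothesis); then $S$ generates $G$ with image $\bar{S} \cup \{1\}$ in $G/Z$, and removing the identity does not change the growth rate, so the claim provides the matching element of $\xi(G)$.

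To prove the claim I would establish the two inequalities separately. The easy direction $e(G/Z, \bar{S}) \leq e(G,S)$ follows from the fact that the restriction $\pi|_{B_n(G,S)}\colon B_n(G,S) \to B_n(G/Z, \bar{S})$ is surjective (any word of length $\leq n$ in $\bar{S}^{\pm 1}$ lifts to a word of length $\leq n$ in $S^{\pm 1}$), which gives $|B_n(G/Z, \bar{S})| \leq |B_n(G,S)|$.

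For the reverse inequality $e(G,S) \leq e(G/Z, \bar{S})$ I would bound the size of the fibres of $\pi|_{B_n(G,S)}$. Fix a finite generating set $T$ of $Z$. For each $\bar{g} \in B_n(G/Z, \bar{S})$, choose a lift $g_0 \in B_n(G,S)$; then any other element of $\pi^{-1}(\bar{g}) \cap B_n(G,S)$ has the form $g_0 z$ with $z = g_0^{-1}g \in Z \cap B_{2n}(G,S)$. The undistortion hypothesis provides constants $A \geq 1$, $B \geq 0$ (depending on $S$ and $T$) with $Z \cap B_{2n}(G,S) \subseteq B_{2An+B}(Z,T)$, hence
\[
|B_n(G,S)| \leq |B_n(G/Z, \bar{S})| \cdot |B_{2An+B}(Z,T)|.
\]
Taking $n$-th roots and sending $n \to \infty$, subexponential growth of $Z$ forces $|B_{2An+B}(Z,T)|^{1/n} \to 1$, which yields the desired inequality.

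I do not expect a substantial obstacle: the two hypotheses on $Z$ each enter exactly once (undistortion to translate a $G$-ball containment into a $T$-ball containment, and subexponential growth to kill the fibre contribution), and the rest is bookkeeping. The only minor care needed is in the lifting step for the reverse inclusion, where the image of the chosen lift equals $\bar{S}$ only up to possibly adjoining the identity, which harmlessly leaves the ball $B_n$, and therefore the growth rate, unchanged.
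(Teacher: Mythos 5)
Your argument is correct and is essentially the paper's own proof: both establish the key claim that $e(G,S)=e(G/Z,\overline{S})$ for every finite generating set $S$ of $G$ by bounding fibre sizes of the surjection $B_n(G,S)\to B_n(G/Z,\overline{S})$ via $B_{2n}(G,S)\cap Z$, using undistortion and sub-exponential growth of $Z$, and both handle the reverse inclusion $\xi(G/Z)\subseteq\xi(G)$ by lifting a generating set of $G/Z$ and adjoining generators of $Z$. The only cosmetic difference is that you track the undistortion constants $A,B$ explicitly, whereas the paper packages the same information into an abstract sub-exponential function $f$ with $|B_n(G,S)\cap Z|\le f(n)$.
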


\begin{proof}
Let $H = G/Z$, and given any $S \subseteq G$ we write $\overline{S} \coloneqq \{ sZ \mid s \in S \} \subseteq H$. It is clear that if $S$ is a finite generating set for $G$, then $\overline{S}$ is a finite generating set for $H$.

We claim that $e(G,S) = e(H,\overline{S})$ for all finite generating sets $S$ of $G$. Indeed, for any $n \in \mathbb{N}$ the quotient map $G \to H$ induces a surjective function $B_n(G,S) \to B_n(H,\overline{S})$, showing that $|B_n(G,S)| \geq |B_n(H,\overline{S})|$ and thus $e(G,S) \geq e(H,\overline{S})$. Conversely, since $Z$ is undistorted and of sub-exponential growth, there exists a sub-exponentially growing function $f\colon \mathbb{N} \to \mathbb{N}$ (i.e.\ a function satisfying $\lim_{n \to \infty} f(n)^{\frac{1}{n}} = 1$) such that $|B_n(G,S) \cap Z| \leq f(n)$ for all $n$. Now for any element $g_0 \in B_n(G,S)$ we have $\{ g^{-1}g_0 \mid g \in B_n(G,S) \cap g_0Z \} \subseteq B_{2n}(G,S) \cap Z$, implying that $|B_n(G,S) \cap g_0Z| \leq f(2n)$. This shows that the preimages of points under the map $B_n(G,S) \to B_n(H,\overline{S})$ have cardinality at most $f(2n)$ and therefore $|B_n(G,S)| \leq f(2n)|B_n(H,\overline{S})|$, so since $f$ grows sub-exponentially it follows that $e(G,S) \leq e(H,\overline{S})$. Hence $e(G,S) = e(H,\overline{S})$, as claimed.

Now given any $\lambda \in \xi(G)$, we have $\lambda = e(G,S)$ for some finite generating set $S$ of $G$, and therefore $\lambda = e(H,\overline{S}) \in \xi(H)$. Conversely, if $\lambda \in \xi(H)$ then $\lambda = e(H,T)$ for some finite generating set $T = \{ s_1Z,\ldots,s_kZ \}$ of $H$; if $S = \{ s_1,\ldots,s_k,z_1,\ldots,z_m \}$, where $\{ z_1,\ldots,z_m \}$ is a finite generating set for $Z$, it then follows that $S$ is a finite generating set of $G$ and $\overline{S} = T \cup \{1_H\}$, implying that $\lambda = e(H,\overline{S}) = e(G,S) \in \xi(G)$. Thus $\xi(G) = \xi(H)$, as required.
\end{proof}

\begin{proof}[Proof of Corollary~\ref{cor:growth-rates-order}]
  We may assume, without loss of generality, that $F$ is non-abelian (as otherwise $G$ is virtually abelian, so $\xi(G) = \varnothing$ is well-ordered).
  
  If $\Phi$ has exponential growth, then by the work of Dahmani--Li \cite{DahmaniLi2022} and Ghosh \cite{Ghosh2023}, the group $G$ is hyperbolic relative to a finite collection $\mathcal{P}$ of free-by-cyclic proper subgroups. By Theorem~\ref{thm:main}, each element of $\mathcal{P}$ is equationally Noetherian. Hence by Theorem~1.2 in \cite{Fujiwara2023}, the set $\xi(G)$ is well ordered. 
  
  Suppose now that $\Phi$ is polynomially growing and has infinite order. By Proposition~\ref{prop:acylindrical_actions}, the group $G$ admits a non-elementary and acylindrical action on a simplicial tree $\mathcal{T}$. If $S$ is a finite generating set for $G$ then $S$ or $S^2$ has an element that acts hyperbolically on $\mathcal{T}$ \cite[Corollary~2 on page~64]{Serre1980}. Moreover, $G$ is equationally Noetherian by Theorem~\ref{thm:main}. Hence the conditions of \cite[Theorem~1.1]{Fujiwara2023} are satisfied when $M = 2$, and we may conclude that $\xi(G)$ is well ordered. 

  Finally, suppose that $\Phi$ has finite order. Then the group $G$ has non-trivial center and so has a central element $z$ such that $\psi(z) \neq 0$, where $\psi\colon F \rtimes_\Phi \ZZ \to \ZZ$ is the group homomorphism defined by $\psi(F) = 0$ and $\psi|_{\ZZ} = \Id_{\ZZ}$. In particular, $Z = \langle z \rangle$ is a central (and therefore normal) subgroup of $G$ that is cyclic (and therefore of sub-exponential growth) and undistorted in $G$ (since $\psi$ maps it injectively to a finite-index subgroup of $\ZZ$). It then follows from Lemma~\ref{lem:growth-rates} that $\xi(G) = \xi(G/Z)$. On the other hand, the group $G/Z$ is an extension of the free group $F$ by the finite group $\ZZ/\psi(Z)$, implying that $G/Z$ is virtually free and therefore Gromov hyperbolic. It then follows from Theorem~2.2 in \cite{FujiwaraSela2023} that $\xi(G) = \xi(G/Z)$ is well-ordered, as required.
\end{proof}

\appendix

\bibliographystyle{amsalpha}
\bibliography{ref}

\providecommand{\bysame}{\leavevmode\hbox to3em{\hrulefill}\thinspace}
\providecommand{\MR}{\relax\ifhmode\unskip\space\fi MR }
\providecommand{\MRhref}[2]{%
  \href{http://www.ams.org/mathscinet-getitem?mr=#1}{#2}
}
\providecommand{\href}[2]{#2}
\begin{thebibliography}{BMR99}

\bibitem[Ago13]{Agol2013}
Ian Agol, \emph{The virtual {H}aken conjecture}, Doc. Math. \textbf{18} (2013),
  1045--1087, with an appendix by Ian Agol, Daniel Groves, and Jason Manning.

\bibitem[AHK23]{AndrewHughesKudlinska2023}
Naomi Andrew, Sam Hughes, and Monika Kudlinska, \emph{Torsion homology growth
  of polynomially growing free-by-cyclic groups}, preprint,
  \href{https://arxiv.org/abs/2211.04389v2}{arXiv:2211.04389v2} [math.GR],
  2023.

\bibitem[AM22]{AndrewMartino2022}
Naomi Andrew and Armando Martino, \emph{Free-by-cyclic groups, automorphisms
  and actions on nearly canonical trees}, J. Algebra \textbf{604} (2022),
  451--495.

\bibitem[BC21]{BregmanClay2021}
Corey Bregman and Matt Clay, \emph{Minimal volume entropy of free-by-cyclic
  groups and 2-dimensional right-angled {A}rtin groups}, Math. Ann.
  \textbf{381} (2021), no.~3-4, 1253--1281. \MR{4333414}

\bibitem[BFH00]{BestvinaFeighnHandel2000}
Mladen Bestvina, Mark Feighn, and Michael Handel, \emph{The {T}its alternative
  for {${\rm Out}(F_n)$}, {I}: {D}ynamics of exponentially-growing
  automorphisms}, Ann. Math. \textbf{151} (2000), no.~2, 517--623.

\bibitem[BFH05]{BestvinaFeighnHandel2005}
\bysame, \emph{The {Tits} alternative for {${\rm Out}(F_n)$}, {II}: A {Kolchin}
  type theorem}, Ann. Math. \textbf{161} (2005), 1--59.

\bibitem[BH92]{BestvinaHandel1992}
Mladen Bestvina and Michael Handel, \emph{Train tracks and automorphisms of
  free groups}, Ann. Math. \textbf{135} (1992), no.~1, 1--51.

\bibitem[BMR97]{BaumslagMyasnikovRomankov1997}
Gilbert Baumslag, Alexei Myasnikov, and Vitaly Roman'kov, \emph{Two theorems
  about equationally noetherian groups}, J. Algebra \textbf{194} (1997), no.~2,
  654--664.

\bibitem[BMR99]{BaumslagMyasnikovRemeslennikov1999}
Gilbert Baumslag, Alexei Myasnikov, and Vladimir Remeslennikov, \emph{Algebraic
  geometry over groups {I}: Algebraic sets and ideal theory}, J. Algebra
  \textbf{219} (1999), no.~1, 16--79.

\bibitem[Bow08]{Bowditch2008}
Brian~H. Bowditch, \emph{Tight geodesics in the curve complex}, Invent. Math.
  \textbf{171} (2008), no.~2, 281--300.

\bibitem[Bri00]{Brinkmann2000}
P.~Brinkmann, \emph{Hyperbolic automorphisms of free groups}, Geom. Funct.
  Anal. \textbf{10} (2000), no.~5, 1071--1089. \MR{1800064}

\bibitem[But17]{Button2017}
Jack~O. Button, \emph{Free by cyclic groups and linear groups with restricted
  unipotent elements}, Groups Complex. Cryptol. \textbf{9} (2017), no.~2,
  137--149.

\bibitem[Cla17]{Clay2017}
Matt Clay, \emph{{$\ell^2$}-torsion of free-by-cyclic groups}, Q. J. Math.
  \textbf{68} (2017), no.~2, 617--634. \MR{3667215}

\bibitem[CSG97]{CeccheriniGrigorchuk1997}
Tullio~G. Ceccherini-Silberstein and Rostislav~I. Grigorchuk, \emph{Amenability
  and growth of one-relator groups}, Enseign. Math. (2) \textbf{43} (1997),
  no.~3-4, 337--354. \MR{1489891}

\bibitem[DD89]{DicksDunwoody1989}
Warren Dicks and Martin~J. Dunwoody, \emph{Groups acting on graphs}, Cambridge
  Studies in Advanced Mathematics, vol.~17, Cambridge University Press, 1989.

\bibitem[DL22]{DahmaniLi2022}
Fran\c{c}ois Dahmani and Ruoyu Li, \emph{Relative hyperbolicity for
  automorphisms of free products and free groups}, J. Topol. Anal. \textbf{14}
  (2022), no.~1, 55--92.

\bibitem[DT24]{DahmaniTouikan2023}
François Dahmani and Nicholas Touikan, \emph{Unipotent linear suspensions of
  free groups}, preprint,
  \href{https://arxiv.org/abs/2305.11274v2}{arXiv:2305.11274v2} [math.GR],
  2024.

\bibitem[FS23]{FujiwaraSela2023}
Koji Fujiwara and Zlil Sela, \emph{The rates of growth in a hyperbolic group},
  Invent. Math. \textbf{233} (2023), no.~3, 1427--1470.

\bibitem[Fuj23]{Fujiwara2023}
Koji Fujiwara, \emph{The rates of growth in an acylindrically hyperbolic
  group}, preprint,
  \href{https://arxiv.org/abs/2103.01430v3}{arXiv:2103.01430v3} [math.GR],
  2023.

\bibitem[GH19]{GrovesHull2019}
Daniel Groves and Michael Hull, \emph{Homomorphisms to acylindrically
  hyperbolic groups {I}: Equationally noetherian groups and families}, Trans.
  Amer. Math. Soc. \textbf{372} (2019), no.~10, 7141--7190.

\bibitem[GH21]{GuirardelHorbez2021}
Vincent Guirardel and Camille Horbez, \emph{Measure equivalence rigidity of
  $\operatorname{Out}({F_N})$}, preprint,
  \href{https://arxiv.org/abs/2103.03696v2}{arXiv:2103.03696v2} [math.GR],
  2021.

\bibitem[Gho23]{Ghosh2023}
Pritam Ghosh, \emph{Relative hyperbolicity of free-by-cyclic extensions},
  Compos. Math. \textbf{159} (2023), no.~1, 153--183.

\bibitem[Hag19]{Hagen2019}
Mark Hagen, \emph{A remark on thickness of free-by-cyclic groups}, Illinois J.
  Math. \textbf{63} (2019), no.~4, 633--643.

\bibitem[HW15]{HagenWise2015}
Mark~F. Hagen and Daniel~T. Wise, \emph{Cubulating hyperbolic free-by-cyclic
  groups: the general case}, Geom. Funct. Anal. \textbf{25} (2015), no.~1,
  134--179.

\bibitem[LS77]{LyndonSchupp1977}
Roger~C. Lyndon and Paul~E. Schupp, \emph{Combinatorial group theory},
  Ergebnisse der Mathematik und ihrer Grenzgebiete, vol.~89, Springer, 1977.

\bibitem[L{\"{u}}c02]{Lueck2002}
Wolfgang L{\"{u}}ck, \emph{{$L^2$}-invariants: theory and applications to
  geometry and {$K$}-theory}, Ergebnisse der Mathematik und ihrer Grenzgebiete.
  3. Folge. A Series of Modern Surveys in Mathematics [Results in Mathematics
  and Related Areas. 3rd Series. A Series of Modern Surveys in Mathematics],
  vol.~44, Springer-Verlag, Berlin, 2002. \MR{1926649}

\bibitem[Mac02]{Macura2002}
Nata\v{s}a Macura, \emph{Detour functions and quasi-isometries}, Q. J. Math.
  \textbf{53} (2002), no.~2, 207--239.

\bibitem[MO15]{MinasyanOsin2015}
Ashot Minasyan and Denis Osin, \emph{Acylindrical hyperbolicity of groups
  acting on trees}, Math. Ann. \textbf{362} (2015), no.~3, 1055--1105.

\bibitem[Neu67]{Neumann1967}
Hanna Neumann, \emph{Varieties of groups}, Ergebnisse der Mathematik und ihrer
  Grenzgebiete, vol.~37, Springer-Verlag, 1967.

\bibitem[Osi16]{Osin2016}
Denis Osin, \emph{Acylindrically hyperbolic groups}, Trans. Amer. Math. Soc.
  \textbf{368} (2016), no.~2, 851--888.

\bibitem[Sel97]{Sela1997}
Zlil Sela, \emph{Acylindrical accessibility for groups}, Invent. Math.
  \textbf{129} (1997), no.~3, 527--565.

\bibitem[Sel01]{Sela2001}
\bysame, \emph{Diophantine geometry over groups {I}: {Makanin}-{Razborov}
  diagrams}, Publ. Math. IH\'ES \textbf{93} (2001), 31--105.

\bibitem[Ser80]{Serre1980}
Jean-Pierre Serre, \emph{Trees}, Springer-Verlag, 1980.

\bibitem[Thu78]{Thurston}
William~P. Thurson, \emph{The geometry and topology of 3-manifolds}, lecture
  notes, Princeton University, 1978.

\bibitem[Val21]{Valiunas2021}
Motiejus Valiunas, \emph{On equationally {Noetherian} and residually finite
  groups}, J. Algebra \textbf{587} (2021), 638--677.

\end{thebibliography}

\end{document}